\title{\boldmath\bf A determinant formula associated with the elliptic 
hypergeometric integrals of type $BC_n$}
\author{
Masahiko ITO\footnote{
Department of Mathematical Sciences,
University of the Ryukyus, 
Okinawa 903-0213, Japan}
\  and 
Masatoshi NOUMI\footnote{
Department of Mathematics, Kobe University, 
Rokko, Kobe 657-8501, Japan
}
}
\date{
}
\newtheorem{thm}{Theorem}[section]
\newtheorem{cor}[thm]{Corollary}
\newtheorem{lem}[thm]{Lemma}
\newtheorem{rem}[thm]{Remark}
\numberwithin{equation}{section}
\newcommand{\bN}{\mathbb{N}}
\newcommand{\bR}{\mathbb{R}}
\newcommand{\bC}{\mathbb{C}}
\newcommand{\bT}{\mathbb{T}}
\newcommand{\cA}{\mathcal{A}}
\newcommand{\cH}{\mathcal{H}}
\newcommand{\cI}{\mathcal{I}}
\newcommand{\cO}{\mathcal{O}}
\newcommand{\hf}{\frac{1}{2}}
\newcommand{\ds}{\displaystyle}
\newcommand{\ts}{\textstyle}
\newcommand{\ep}{\epsilon}
\newcommand{\la}{\langle}
\newcommand{\ra}{\rangle}
\newcommand{\frS}{\mathfrak{S}}
\newcommand{\pr}[1]{\left\{#1\right\}}
\newcommand{\prm}[2]{\left\{\,#1\ \big|\ #2\,\right\}}
\newcommand{\prmts}[2]{\{#1\ |\ #2\}}
\newcommand{\isom}{\stackrel{\sim}{\to}}
\newcommand{\bTC}{(\bC^{\ast})}
\newcommand{\bTR}{\bT}
\newenvironment{proof}[1]{
\par\smallskip\noindent{\sl {#1}\,$:$\ }
}{\hfill $\square$
\par\smallskip}
\begin{document}
\maketitle
\allowdisplaybreaks
 \footnote[0]{
2010 {\em Mathematics Subject Classification}. Primary 33D67; Secondary 33D65, 39A13.}
\footnote[0]{
{\em Key words and phrases}. elliptic hypergeometric integral, determinant formula, elliptic interpolation function}
\begin{abstract}
We establish a determinant formula for the bilinear form associated with the elliptic hypergeometric integrals of type $BC_n$
by studying the structure of $q$-difference equations to be satisfied by them.  
The determinant formula is proved by combining the $q$-difference equations of the determinant and its asymptotic analysis along the singularities.  
The elliptic interpolation functions of type $BC_n$ are essentially used in the study of  
the $q$-difference equations. 
\end{abstract}
\tableofcontents
\section{Introduction} 
\label{section:1}
The purpose of this paper is to provide a foundation for the study of $q$-difference equations satisfied by
the elliptic hypergeometric integrals of type $BC_n$, and to establish a determinant 
formula for the bilinear form associated with them. 
We summarize below the main results of this paper. 
\par
Throughout this paper we fix two bases $p,q\in\bC^\ast$ with $|p|<1$, $|q|<1$, and 
use the notation of the {\em multiplicative theta function} $\theta(u;p)$ 
and the {\em elliptic gamma function} $\Gamma(u;p,q)$ specified by 
\begin{equation*}
\theta(u;p)=(u;p)_\infty(p/u;p)_\infty, \quad 
\Gamma(u;p,q)=\frac{(pq/u;p,q)_\infty}{(u;p,q)_\infty}\quad(u\in\bC^\ast)
\end{equation*}
where $(u;p)_\infty=\prod_{i=0}^{\infty}(1-p^i u)$ and 
$(u;p,q)_\infty=\prod_{i,j=0}^{\infty}(1-p^iq^ju)$.
They satisfy
\begin{equation*}
\theta(pu;p)=-u^{-1}\theta(u;p)
\ \ \mbox{and}\ \
\Gamma(qu;p,q)=\theta(u;p)\Gamma(u;p,q),\ 
\Gamma(pu;p,q)=\theta(u;q)\Gamma(u;p,q),
\end{equation*}
respectively.
In the $BC_n$ context of this paper, 
we define the function $e(u,v;p)$ of 
two variables by 
\begin{equation*}
e(u,v;p)=u^{-1}\theta(uv;p)\theta(uv^{-1};p). 
\end{equation*}
We also use the notation of $t$-shifted factorials
\begin{equation*}
\begin{split}
\theta(u;p)_{t,k}&=
\theta(u;p)\theta(tu;p)\cdots
\theta(t^{k-1}u;p),
\\
e(u,v;p)_{t,k}&=
e(u,v;p)e(tu,v;p)\cdots
e(t^{k-1}u,v;p)
\end{split}
\end{equation*}
for $k=0,1,2,\ldots$.  
\subsection{Elliptic hypergeometric integrals of type $BC_n$}
\label{subsection:1.1}
Let $z=(z_1,\ldots,z_n)$ be the canonical coordinates of the $n$-dimensional algebraic 
torus $\bTC^{n}$.  We denote by $W_n=\pr{\pm1}^n\rtimes\frS_n$ the hyperoctahedral group 
of degree $n$ (the Weyl group of type $BC_n$), acting on $\bTC^n$ through permutations and inversions of the coordinates 
$z_i$ ($i=1,\ldots,n$). Fixing a constant $t\in \bC^*$ with $|t|<1$, 
for each $m=1,2,\ldots$ we define a $W_n$-invariant 
meromorphic function $\Phi(z)$ 
on $\bTC^n$ with parameters $a=(a_1,\ldots,a_m)\in\bTC^m$ by 
\begin{equation}\label{eq:Phi}
\Phi(z)=\Phi(z;a;p,q)=
\prod_{i=1}^{n}
\frac{\prod_{k=1}^{m}\Gamma(a_kz_i^{\pm1};p,q)}
{\Gamma(z_i^{\pm 2};p,q)}
\prod_{1\le i<j\le n}
\frac{\Gamma(tz_i^{\pm1}z_j^{\pm1};p,q)}
{\Gamma(z_i^{\pm1}z_j^{\pm 1};p,q)}. 
\end{equation}
The double sign $\pm$ indicates, here and hereafter, 
the product of factors with possible combinations of signs as 
\begin{equation*}
f(u^{\pm1})=f(u)f(u^{-1}),\quad
f(u^{\pm1}v^{\pm1})=f(uv)f(uv^{-1})f(u^{-1}v)f(u^{-1}v^{-1}). 
\end{equation*}
We now suppose that the parameters satisfy the conditions  
$|a_k|<1$ $(k=1,\ldots,m)$.  
Noting that $\Phi(z)$ is holomorphic in a neighborhood of the 
real torus
\begin{equation*}
\bTR^n=\prm{z=(z_1,\ldots,z_n)\in\bTC^n}{|z_i|=1
\quad(i=1,\ldots,n)}, 
\end{equation*}
for each holomorphic function $f(z)$ on $\bTC^n$ 
we consider the {\em elliptic hypergeometric integral}
\begin{equation}
\label{eq:EHI}
\la f \ra_{\Phi}=\int_{\bTR^n}
f(z)\Phi(z)\omega_n(z),\qquad
\omega_n(z)=\frac{1}{(2\pi\sqrt{-1})^n}\frac{dz_1\cdots dz_n}
{z_1\cdots z_n}. 
\end{equation}
\par
When $m=6$ and $f=1$, the integral \eqref{eq:EHI} 
can be evaluated as
\begin{equation}
\label{eq:detK(axy);r=1*}
\begin{split}
\la\,1\ra_{\Phi}=
\int_{\bTR^n}\Phi(z)\omega_n(z)
=
\frac{2^nn!}{(p;p)_\infty^n(q;q)_\infty^n}
\prod_{i=0}^{n-1}
\bigg(\frac{\Gamma(t^{i+1};p,q)
}
{\Gamma(t;p,q)
}
\prod_{1\le k<l\le 6}
\Gamma(t^ia_ka_l;p,q)
\bigg),
\end{split}
\end{equation}
provided that 
$a_1\cdots a_6t^{2n-2}=pq$.
This is known as the evaluation formula of the {\em elliptic Selberg integral of type $BC_n$} due to van Diejen and Spiridonov \cite{vDS}. 
If we denote by $I(a_1,\ldots,a_6)$ the left -hand side of \eqref{eq:detK(axy);r=1*}, then it satisfies the $q$-difference equations 
\begin{equation}\label{eq:qDE-0}
I(a_1,\ldots,qa_k,\ldots,a_5,q^{-1}a_6)=I(a_1,\ldots,a_5,a_6)
\prod_{i=0}^{n-1}\prod_{\substack{1\le l\le 5\\[1pt] l\ne k}}
\frac{\theta(a_la_kt^{i};p)}{\theta(q^{-1}a_la_6t^{i};p)}
\end{equation}
for $k=1,\ldots,5$.  
In \cite{INSumBC,INIntBC}, we gave a proof of the formula \eqref{eq:detK(axy);r=1*} based on the $q$-difference equations \eqref{eq:qDE-0}
and singularity analysis of the integral. 
\par
When we study the general case where $m\ge 6$, we need to consider an appropriate class of functions $f(z)$ in the integral \eqref{eq:EHI}. 
In the following, we introduce two vector spaces $\cH^{(p)}_{r-1,n}$, $\cH^{(q)}_{r-1,n}$ 
of $W_n$-invariant quasi-periodic functions with respect to $p$ and $q$, 
respectively, and investigate 
the $\bC$-bilinear form 
\begin{equation*}
\la\,,\,\ra_{\Phi}:\ \cH^{(p)}_{r-1,n}\times \cH^{(q)}_{r-1,n}\to\bC
\end{equation*}
defined by 
\begin{equation}\label{eq:bilin}
\begin{split}
&\la\,f,g\ra_{\Phi}=\la fg\ra_{\Phi}
=\int_{\bTR^n}f(z)g(z)\Phi(z)\omega_n(z)
\qquad
(f\in\cH^{(p)}_{r-1,n}, g\in \cH^{(q)}_{r-1,n}). 
\end{split}
\end{equation}
This bilinear form is a $BC_n$ elliptic extension of the {\em hypergeometric pairing} of 
Tarasov and Varchenko \cite{TV97}, studied in the context of the $q$-KZ equations (of type $A_n$). 
From the viewpoint of $q$-difference de Rham theory, 
$\cH_{r-1,n}^{(p)}$ plays the role of the space of {\em $n$-cocycles}, and 
$\cH_{r-1,n}^{(q)}$ that of {\em $n$-cycles}, respectively.
One of the main goals of this paper is to provide an explicit formula 
for the determinant 
of this bilinear form with respect to a pair of 
certain interpolation bases 
for 
$\cH^{(p)}_{r-1,n}$ and 
$\cH^{(q)}_{r-1,n}$. 
\subsection{Interpolation basis for a space of 
$W_n$-invariant quasi-periodic functions.}
In what follows we denote by $T_{p,z_i}$ the $p$-shift operator 
with respect to the variable $z_i$: 
\begin{equation*}
T_{p,z_i}f(z_1,\ldots,z_n)=f(z_1,\ldots,pz_i,\ldots,z_n)\quad(i=1,\ldots,n). 
\end{equation*}
As in \cite{INSlaterBC}, 
for each $s=1,2,\ldots$ we introduce 
the $\bC$-vector space 
\begin{equation*}
\begin{split}
\cH_{s-1,n}^{(p)}=\prm{f(z)\in\cO(\bTC^n)^{W_n}}
{T_{p,z_i}f(z)=f(z)(pz_i^2)^{-s+1}\quad(i=1,\ldots,n)}
\end{split}
\end{equation*}
of all $W_n$-invariant holomorphic functions on $\bTC^n$ 
with quasi-periodicity {\em of degree $s-1$}. 
This vector space has dimension 
$\binom{n+s-1}{s-1}$, which coincides with the cardinality 
of the set 
\begin{equation*}
Z_{s,n}=\prm{\mu=(\mu_1,\ldots,\mu_s)\in\bN^s}
{|\mu|=\mu_1+\cdots+\mu_s=n}
\end{equation*}
of multiindices, where $\bN=\pr{0,1,2,\ldots}$. 
\par
Fixing a set $c=(c_1,c_2,\ldots,c_s)\in\bTC^s$ of generic parameters, 
for each $\mu\in Z_{s,n}$ we consider the reference point
\begin{equation*}
(c)_{t,\mu}
=(c_1,tc_1,\ldots,t^{\mu_1-1}c_1,c_2,tc_2,\ldots,t^{\mu_2-1}c_2,\ldots, 
c_s,tc_s,\ldots,t^{\mu_s-1}c_s)\in\bTC^n
\end{equation*}
in $\bTC^n$, where the indexing set $\pr{1,\ldots,n}$ is divided into $s$ blocks of size $\mu_1,\mu_2,\ldots,\mu_s$. 
Then, 
for the set 
$\prmts{(c)_{t,\mu}}{\mu\in Z_{s,n}}$ 
of reference points, 
it is known by \cite{INSlaterBC} that there exists 
a unique $\bC$-basis 
$\prmts{E_{\mu}(c;z;p)}{\mu\in Z_{s,n}}$ of $\cH_{s-1,n}^{(p)}$ 
satisfying the interpolation condition 
\begin{equation*}
E_{\mu}(c;(c)_{t,\nu};p)=\delta_{\mu,\nu}\qquad(\mu,\nu\in Z_{s,n})
\end{equation*}
where $\delta_{\mu,\nu}$ denotes the Kronecker delta; 
we call this $\prmts{E_{\mu}(c;z;p)}{\mu\in Z_{s,n}}$ 
the {\em interpolation basis} 
of $\cH_{s-1,n}^{(p)}$ with 
respect to $c\in\bTC^s$. 
Note that each $f(z)\in\cH^{(p)}_{s-1,n}$ is expanded as 
\begin{equation*}
f(z)=\sum_{\mu\in Z_{s,m}} f((c)_{t,\mu})E_{\mu}(c;z;p)
\end{equation*}
in terms of the interpolation basis.  
Fundamental properties of this 
interpolation basis are summarized below in Section \ref{section:2}. 
\subsection{Bilinear form associated with the elliptic hypergeometric integral}
Returning to the meromorphic function $\Phi(z)$ of \eqref{eq:Phi}, 
we assume that $m=2r+4$ $(r=1,2,\ldots)$.  
With respect to the two bases $p$, $q$, 
we consider the two vector spaces $\cH^{(p)}_{r-1,n}$, $\cH^{(q)}_{r-1,n}$ 
of $W_n$-invariant quasi-periodic functions of degree $r-1$, 
respectively, and define 
the $\bC$-bilinear form 
$
\la\,,\,\ra_{\Phi}:\cH^{(p)}_{r-1,n}\times \cH^{(q)}_{r-1,n}\to\bC
$
by \eqref{eq:bilin}. 
We propose 
an explicit formula for the determinant 
of this bilinear form with respect to a pair of interpolation bases 
for 
$\cH^{(p)}_{r-1,n}$ and 
$\cH^{(q)}_{r-1,n}$. 
\par
Fixing generic 
parameters $x=(x_1,\ldots,x_r)$ and $y=(y_1,\ldots,y_r)$, 
we take the interpolation bases 
for these two vector spaces with respect to $x$ and $y$ 
respectively: 
\begin{equation*}
\begin{split}
\cH^{(p)}_{r-1,n}=\bigoplus_{\mu\in Z_{r,n}}\bC\,E_{\mu}(x;z;p),
\qquad
\cH^{(q)}_{r-1,n}=\bigoplus_{\mu\in Z_{r,n}}\bC\,E_{\mu}(y;z;q). 
\end{split}
\end{equation*}
For each pair $(\mu,\nu)\in Z_{r,n}\times Z_{r,n}$, 
we introduce 
the elliptic hypergeometric integral
\begin{equation*}
\begin{split}
&K_{\mu,\nu}(a;x,y)
=
\la E_{\mu}(x;z;p),E_{\nu}(y;z;q)\ra_{\Phi}
\\
&=\int_{\bTR^n}
E_{\mu}(x;z;p)\,E_{\nu}(y;z;q)\Phi(z;a;p,q)\omega_{n}(z)
\qquad(\mu,\nu\in Z_{r,n}),  
\end{split}
\end{equation*}
which is a holomorphic function on the domain $|a_k|<1$ $(k=1,\ldots,m)$ of $(\bC^*)^m$. 
We consider the 
$\binom{n+r-1}{r-1}\times\binom{n+r-1}{r-1}$ matrix 
\begin{equation*}
K(a;x,y)
=\big(K_{\mu,\nu}(a;x,y)\big)_{\mu,\nu\in Z_{r,n}}
\end{equation*}
as a matrix representation of the bilinear form 
\eqref{eq:bilin} with respect to the interpolation bases. 
\begin{thm}\label{thm:1A}
Set $m=2r+4$ $(r=1,2,\ldots)$.  
Under the balancing condition 
$a_1\cdots a_mt^{2n-2}=pq$ for the parameters, 
the determinant of the 
$\binom{n+r-1}{r-1}\times \binom{n+r-1}{r-1}$ matrix 
$K(a;x,y)=(K_{\mu,\nu}(a;x,y))_{\mu,\nu\in Z_{r,n}}$ 
is given explicitly by 
\begin{equation}
\label{eq:detK(axy)}
\begin{split}
&
\det K(a;x,y)
\\
&
=
\bigg(
\frac{2^nn!}{(p;p)_\infty^n(q;q)_\infty^n}
\bigg)^{\binom{n+r-1}{r-1}}
\prod_{i=0}^{n-1}
\bigg(\frac{\Gamma(t^{i+1};p,q)^r
}
{\Gamma(t;p,q)^r
}
\prod_{1\le k<l\le m}
\Gamma(t^ia_ka_l;p,q)
\bigg)^{\binom{n-i+r-2}{r-1}}
\\
&\qquad
\cdot
\Bigg(
\prod_{0\le i+j<n}
\prod_{1\le k<l\le r}
\big(
e(t^ix_k,t^jx_l;p)
e(t^iy_k,t^jy_l;q)
\big)^{\binom{n-i-j+r-3}{r-2}}
\Bigg)^{\!\!-1}.
\end{split}
\end{equation}
\end{thm}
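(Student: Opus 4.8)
The plan is to compute the determinant by exploiting that it factors through a change of basis. Since the bilinear form $\la\,,\,\ra_\Phi$ depends only on the parameters $a$ (through $\Phi$), whereas the interpolation bases $\{E_\mu(x;z;p)\}_{\mu\in Z_{r,n}}$ and $\{E_\nu(y;z;q)\}_{\nu\in Z_{r,n}}$ depend only on $x$, respectively only on $y$, one may fix reference bases $\{\phi_\lambda\}$ of $\cH^{(p)}_{r-1,n}$ and $\{\psi_\rho\}$ of $\cH^{(q)}_{r-1,n}$ (chosen independent of $x,y$) and write $K(a;x,y)=P(x)\,L(a)\,{}^{t}Q(y)$, where $P(x)$, $Q(y)$ are the transition matrices to the interpolation bases and $L(a)=(\la\phi_\lambda,\psi_\rho\ra_\Phi)_{\lambda,\rho}$. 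Then $\det K(a;x,y)=\det P(x)\,\det Q(y)\,\det L(a)$, and I would evaluate the three factors separately.

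\emph{The dependence on $x$ and $y$.} By the interpolation property $E_\mu(c;(c)_{t,\nu};p)=\delta_{\mu,\nu}$, the matrix $P(x)$ is the inverse of the evaluation matrix $\big(\phi_\lambda((x)_{t,\nu};p)\big)_{\nu,\lambda}$, so $\det P(x)^{-1}$ is an elliptic ``generalised Vandermonde'' determinant for $\cH^{(p)}_{r-1,n}$ supported on the reference points $(x)_{t,\nu}$ $(\nu\in Z_{r,n})$. I would evaluate it by determining its divisor of zeros in $x$: it must vanish exactly where two of the reference points collide, i.e.\ where $t^{i}x_k=t^{j}x_l$ for $k\ne l$, and I expect the order of vanishing there to be $\binom{n-i-j+r-3}{r-2}=\dim\cH^{(p)}_{r-2,\,n-i-j-1}$. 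Matching this divisor against the quasi-periodicity of the $E_\mu$ in the variables $x$ recalled in Section~\ref{section:2} forces $\det P(x)^{-1}$ to equal a constant times $\prod_{0\le i+j<n}\prod_{1\le k<l\le r}e(t^ix_k,t^jx_l;p)^{\binom{n-i-j+r-3}{r-2}}$, and likewise for $\det Q(y)^{-1}$ with $p$ replaced by $q$. This accounts for the last line of \eqref{eq:detK(axy)}.

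\emph{The dependence on $a$.} It remains to evaluate $\det L(a)$, which is independent of $x$ and $y$; here the elliptic interpolation functions of type $BC_n$ are essential. For a shift $a\mapsto\tilde a$ preserving the balancing condition, say $\tilde a_i=qa_i$, $\tilde a_j=q^{-1}a_j$ and $\tilde a_k=a_k$ for $k\notin\{i,j\}$, the ratio $\Phi(z;\tilde a)/\Phi(z;a)$ is $W_n$-invariant and invariant under each $T_{p,z_l}$, and equals a constant times $\prod_{l=1}^{n}\big(e(z_l,a_i;p)/e(z_l,q^{-1}a_j;p)\big)$. Substituting this into the integral, clearing the denominator, and re-expanding in the interpolation bases using the connection formulae of Section~\ref{section:2} for the multiplication of $E_\mu(x;z;p)$ by factors $e(z_l,\cdot;p)$ should yield a first-order matrix $q$-difference equation $L(\tilde a)=A_{ij}(a)\,L(a)\,B_{ij}(a)$ (and an analogue for $p$-shifts). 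Taking determinants gives $\det L(\tilde a)=\det A_{ij}(a)\,\det B_{ij}(a)\,\det L(a)$, and the crucial point is that $\det A_{ij}\det B_{ij}$ is a ratio of products of theta factors $\theta(t^{i'}a_ia_k;p)$ and $\theta(q^{-1}t^{i'}a_ja_k;p)$ $(k\notin\{i,j\})$ with exponents $\binom{n-i'+r-2}{r-1}=\dim\cH^{(p)}_{r-1,\,n-i'-1}$, governed by the filtration of the contiguity operators by order of vanishing at the reference points. This is exactly the $q$-difference equation satisfied by $G(a)=\prod_{i=0}^{n-1}\big(\prod_{1\le k<l\le m}\Gamma(t^ia_ka_l;p,q)\big)^{\binom{n-i+r-2}{r-1}}$ and generalises \eqref{eq:qDE-0}. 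Hence the ratio $\det L(a)/G(a)$ is invariant under all these shifts along the balancing hypersurface; combined with the holomorphy of $\det L(a)$ on $|a_k|<1$, its meromorphic continuation furnished by the $q$-difference equations, and the asymptotic analysis of $\det L(a)$ along the singular locus of $G(a)$, a Liouville-type argument forces $\det L(a)/G(a)$ to be constant, the same asymptotic analysis simultaneously producing the $a$-independent factor $\prod_{i=0}^{n-1}\big(\Gamma(t^{i+1};p,q)^r/\Gamma(t;p,q)^r\big)^{\binom{n-i+r-2}{r-1}}$.

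\emph{The constant, and the main obstacle.} The remaining overall constant is pinned down by specialisation: when $r=1$ the matrix $K(a;x,y)$ collapses to the $1\times1$ matrix $\la1\ra_\Phi$ evaluated in \eqref{eq:detK(axy);r=1*}, which fixes the power $\binom{n+r-1}{r-1}$ of the prefactor $2^nn!/((p;p)_\infty^n(q;q)_\infty^n)$; alternatively one lets some product $a_ka_l$ tend to a resonance value $t^{-i}$, along which $K(a;x,y)$ becomes block-triangular and its determinant factorises into lower-rank instances of the same formula, giving an inductive verification of all the exponents. The main obstacle is the step establishing the matrix $q$-difference equations for $L(a)$ and, above all, identifying the exponents $\binom{n-i+r-2}{r-1}$ in $\det A_{ij}\det B_{ij}$: this needs precise control of how the contiguity operators on $\cH^{(p)}_{r-1,n}$ and $\cH^{(q)}_{r-1,n}$ interact with the filtration by order of vanishing at the reference points. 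The growth estimate needed for the Liouville argument, and the collision analysis of the elliptic Vandermonde determinant above, are the remaining technical points.
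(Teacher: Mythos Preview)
Your strategy coincides with the paper's: factor off the $x,y$ dependence via the transition-matrix determinant (this is \cite[Theorem~4.1]{INSlaterBC}, cited in Section~\ref{section:4}), derive $q$- and $p$-difference equations in $a$ for the remaining determinant, conclude the ratio to the explicit Gamma product is constant, then fix the constant by residue analysis at a resonance. Two points to sharpen. First, the $q$-difference equation is one-sided, $T_{q,a_i}T_{q,a_j}^{-1}L=A_{ij}L$, because the $q$-shift ratio $\Phi(z;\tilde a)/\Phi(z;a)$ involves only $p$-theta factors and hence acts only on the $\cH^{(p)}$ side; the right action arises from the \emph{separate} $p$-shift system, and it is the simultaneous invariance of $\det L/G$ under both $q$- and $p$-shifts (not holomorphy plus a growth bound) that forces constancy. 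Second, specialising to $r=1$ does not pin down $c_{r,n}$: the paper's residue analysis at $a_1a_{r+1}\to 1$ yields a recursion $c_{r,n}=c_{r-1,n}\,c_{r,n-1}\cdot(\text{explicit})$, so you need double induction on $(r,n)$, with the block-triangular decomposition you describe producing both the $(r-1,n)$ and the $(r,n-1)$ pieces. The ``main obstacle'' you flag is exactly what the paper resolves via the symmetrised coboundary operator $\nabla^\Psi_{\mathrm{sym}}:\cH^{(p)}_{r,n-1}\to\cH^{(p)}_{r,n}$ and the explicit triangularity of the reduction map on $H^\Psi_{\mathrm{sym}}=\cH^{(p)}_{r,n}/\nabla^\Psi_{\mathrm{sym}}\cH^{(p)}_{r,n-1}$, which makes the diagonal entries---hence $\det A_{ij}$---computable.
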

\begin{rem}{\rm
We comment on two special cases of Theorem \ref{thm:1A}. 
When $r=1$ (i.e.,~$m=6$), the matrix size of $K(a;x,y)$ reduces to 1 and Theorem \ref{thm:1A} gives 
the formula \eqref{eq:detK(axy);r=1*}. 
On the other hand, when $n=1$, the matrix size of $K(a;x,y)$ reduces to $r$,  
and Theorem \ref{thm:1A} means that, 
for $x,y\in (\mathbb{C}^*)^r$
\begin{equation*}
\begin{split}
\det\Big(\la E_{\epsilon_i}(x;z;p),E_{\epsilon_j}(y;z;q)\ra_\Phi\Big)_{i,j=1}^r
=\frac{2^r}{(p;p)_\infty^r(q;q)_\infty^r}
\frac{
\prod_{1\le k<l\le 2r+4}
\Gamma(a_ka_l;p,q)
}{
\prod_{1\le k<l\le r}
e(x_k,x_l;p)
e(y_k,y_l;q)
}
\end{split}
\end{equation*}
under the condition $a_1\cdots a_{2r+4}=pq$ with $|a_k|<1$ $(k=1,\ldots,2r+4)$, 
where $E_{\epsilon_i}(x;z;p)=\prod_{1\le k\le r\atop k\ne i}e(z,x_k;p)/e(x_i,x_k;p)$ $(z\in \mathbb{C}^*)$ for $i=1,\ldots, r$, 
as mentioned in \eqref{eq:E(c;u);n=1}. 
This is also a special case of the determinant formula {\it of type I}\, found by Rains and Spiridonov \cite{RS}.
}
\end{rem}
\par
\medskip
Under the balancing condition, it turns out that  
the integrals $K_{\mu,\nu}(a;x,y)$ are continued to meromorphic functions on the whole hypersurface 
$a_1\cdots a_mt^{2n-2}=pq$ of $(\bC^*)^m$, 
provided that $|p|$ or $|q|$ is sufficiently small.
\begin{thm}\label{thm:1B}
Suppose that $|p|<|t|^{2n-2}$. Under the condition 
$a_1\cdots a_mt^{2n-2}=pq$ $(m=2r+4),$ 
the matrix $K(a;x,y)$ satisfies a system of $q$-difference equations with respect of $a=(a_1,\ldots,a_m)$ 
in the form 
\begin{equation}
\label{eq:1Bq}
T_{q,a_k}T_{q,a_l}^{-1}K(a;x,y)=M^{k,l}(a;x;p,q)K(a;x,y)\quad (1\le k<l\le m),
\end{equation}
where 
\begin{equation*}
M^{k,l}(a;x;p,q)=\big(M^{k,l}_{\mu,\nu}(a;x;p,q)\big)_{\mu,\nu\in Z_{r,n}} \quad (1\le k<l\le m)
\end{equation*}
are $\binom{n+r-1}{r-1}\times\binom{n+r-1}{r-1}$ matrices whose entries are meromorphic functions in $a$. 
\end{thm}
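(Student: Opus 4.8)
\medskip
\noindent\textbf{Proof strategy for Theorem \ref{thm:1B}.}
The plan is to recast the asserted system as a statement living entirely on the ``cocycle'' side $\cH^{(p)}_{r-1,n}$, generalizing the scalar $q$-difference equation \eqref{eq:qDE-0}. First I would compute the contiguity ratio attached to the shift $T_{q,a_k}T_{q,a_l}^{-1}$. Writing $a'=(a_1,\ldots,qa_k,\ldots,q^{-1}a_l,\ldots,a_m)$ and using $\Gamma(qu;p,q)=\theta(u;p)\Gamma(u;p,q)$ together with the definition of $e(u,v;p)$, one obtains
\begin{equation*}
\frac{\Phi(z;a';p,q)}{\Phi(z;a;p,q)}=h_{k,l}(z):=\Big(\frac{qa_k}{a_l}\Big)^{\!n}\prod_{i=1}^{n}\frac{e(z_i,a_k;p)}{e(z_i,q^{-1}a_l;p)},
\end{equation*}
a $W_n$-invariant function that is $p$-elliptic in $z$ and has at most simple poles along the divisors $z_i^{\pm1}\in q^{-1}a_l\,p^{\bZ}$. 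Since the balancing condition is preserved by this shift, one has $T_{q,a_k}T_{q,a_l}^{-1}K_{\mu,\nu}(a;x,y)=\la h_{k,l}(z)\,E_\mu(x;z;p),\,E_\nu(y;z;q)\ra_{\Phi(a)}$ (literally for parameters where the right side is defined, and in general by the meromorphic continuation already noted), so it suffices to establish a congruence
\begin{equation*}
h_{k,l}(z)\,E_\mu(x;z;p)\ \equiv\ \sum_{\mu'\in Z_{r,n}}M^{k,l}_{\mu,\mu'}(a;x;p,q)\,E_{\mu'}(x;z;p)
\end{equation*}
modulo the ``radical'' of $\la\,,\,\ra_{\Phi(a)}$ in its second argument, i.e.\ modulo $W_n$-invariant meromorphic $\varphi(z)$ with $\la\varphi,g\ra_{\Phi(a)}=0$ for all $g\in\cH^{(q)}_{r-1,n}$. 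Both sides of the congruence are $p$-quasi-periodic of degree $r-1$, but the left side is only meromorphic, so the assertion is that its poles can be absorbed modulo the radical and that the outcome lies in the finite-dimensional holomorphic space $\cH^{(p)}_{r-1,n}$. This is the $BC_n$ elliptic counterpart of the fact that, in the $q$-KZ picture of Tarasov--Varchenko, the Gauss--Manin type connection acts through the cocycle side alone; this is why $M^{k,l}$ does not depend on $y$ or $\nu$.

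Next I would set up a vanishing lemma identifying enough elements of the radical. From the functional equations of $\Gamma$ one writes the $T_{q,z_i}$-multiplier $T_{q,z_i}\Phi(z;a)/\Phi(z;a)$ as an explicit ratio of $p$-theta functions and forms the corresponding $q$-de Rham type operators $\nabla_i$; then $\la\nabla_i\psi,g\ra_{\Phi(a)}$ is, after symmetrization over $W_n$, an integral of a $T_{q,z_i}$-difference, which by a shift of the $z_i$-contour collapses to a sum of residues along the divisors swept out. Choosing $\psi$ holomorphic on the relevant annuli makes this sum empty, so such $\nabla_i\psi$ lie in the radical; poles of $h_{k,l}$ (and zeros of $\Phi(z;a)$) sitting between the two contours instead produce residue terms, which are lower-dimensional integrals of the same shape and must be bookkept. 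The hypothesis $|p|<|t|^{2n-2}$ enters precisely here: it puts the zeros of $\Phi(z;a)$ and the poles of $h_{k,l}$ in the right relative position, so that all contour deformations and residue evaluations are unambiguous.

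The main step is to exhibit the $q$-exact primitives that realize the congruence, and this is where the elliptic interpolation functions of type $BC_n$ are used essentially. Exploiting their characterization by interpolation at the reference points $(x)_{t,\nu}$ and the structural identities recalled in Section \ref{section:2} --- which control how multiplication by the factors $e(z_i,a_k;p)$ and division by $e(z_i,q^{-1}a_l;p)$ act on the interpolation basis --- one decomposes $h_{k,l}(z)\,E_\mu(x;z;p)$ as an element of $\cH^{(p)}_{r-1,n}$, plus a $q$-exact term, plus a term annihilated by the vanishing lemma, thereby reading off $M^{k,l}_{\mu,\mu'}(a;x;p,q)$ as explicit ratios of $p$- and $q$-theta functions in $a$ and $x$; in particular these are meromorphic in $a$, and manifestly independent of $y$ and $\nu$, which yields \eqref{eq:1Bq}. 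The balancing condition $a_1\cdots a_mt^{2n-2}=pq$ is indispensable throughout: it is what makes the $q$-de Rham reduction close up within degree $r-1$, and it underlies the finite-dimensionality matching $\binom{n+r-1}{r-1}$. The identity is first proved for generic $a$ on the hypersurface and then extended to all of it by analytic continuation, using the meromorphic continuation of the entries of $K(a;x,y)$. I expect the real obstacle to be this last step: performing the $q$-exact decomposition of $h_{k,l}(z)\,E_\mu(x;z;p)$ while simultaneously keeping the residue contributions under control is the technical heart, and it is where the fine combinatorial structure of the elliptic interpolation functions is genuinely needed.
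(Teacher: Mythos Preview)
Your outline has the right spirit---$q$-de Rham reduction, interpolation functions, independence of the cycle side---but it misses the specific mechanism the paper uses, and that mechanism is precisely what resolves the difficulty you flag as the ``technical heart''.

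You propose to work directly in degree $r-1$: form the meromorphic ratio $h_{k,l}(z)=\Phi(z;a')/\Phi(z;a)$, multiply by $E_\mu(x;z;p)\in\cH^{(p)}_{r-1,n}$, and then somehow absorb the poles along $e(z_i,q^{-1}a_l;p)=0$ into $q$-exact terms so that the remainder lands back in $\cH^{(p)}_{r-1,n}$. The paper does \emph{not} do this. Instead it first applies the auxiliary shift $T_{pq,a_m}$, which multiplies $\Phi$ by $\prod_i e(a_m,z_i;p)\,e(a_m,z_i;q)$ and so lifts both arguments of the pairing from degree $r-1$ to degree $r$. The entire $q$-de Rham apparatus of Section \ref{section:3} is built in degree $r$ (with $s=r+1$, $m=2s+2$): the coboundary operator $\nabla^{\Psi}_{\rm sym}:\cH^{(p)}_{r,n-1}\to\cH^{(p)}_{r,n}$ maps holomorphic cocycles to holomorphic cocycles, and its cokernel $H^{\Psi}_{\rm sym}$ has dimension exactly $\binom{n+r-1}{r-1}$. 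After the lift, $T_{q,a_k}T_{q,a_m}^{-1}$ acts on $\varphi_\mu(z)=E_\mu\cdot\prod_i e(a_m,z_i;p)\in\cH^{(p)}_{r,n}$ by replacing the $e(a_m,\cdot\,;p)$ factor with $e(a_k,\cdot\,;p)$---still holomorphic, still in $\cH^{(p)}_{r,n}$---and the reduction to the standard basis of $H^{\Psi}_{\rm sym}$ is carried out explicitly as a product of triangular transition matrices (the $C^{I;k,l}$ of Theorem \ref{thm:transC} and the $B^{I;k,l}$ of \eqref{eq:transB}). One then undoes $T_{pq,a_m}$, passes from the $a_{\{1,\ldots,r\}}$-interpolation basis to the $x$-interpolation basis by another explicit transition, and uses the permutation symmetry of $K(a;x,y)$ in $a_1,\ldots,a_m$ to obtain \eqref{eq:1Bq} for general $(k,l)$.

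So the gap is this: there is no natural $q$-de Rham quotient of $\cH^{(p)}_{r-1,n}$ (with $m=2r+4$ parameters and balancing $=pq$) of the correct dimension; the Lemma that $(f_i^++f_i^-)\varphi$ lands in the right holomorphic space requires the balancing $t^{2n-2}a_1\cdots a_m=1$ and the degree $s-1=r$. Your ``reduction within degree $r-1$'' therefore has no framework to sit in, and the pole-absorption step you correctly identify as the obstacle is exactly what the $T_{pq,a_m}$ trick circumvents by moving everything into degree $r$. Incidentally, the hypothesis $|p|<|t|^{2n-2}$ in the paper is used only to guarantee that the parameter domain $V_0$ on which both $\cI_\mu(a)$ and its $q$-shift are defined is nonempty; it is not a statement about contour deformations or residue positions.
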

\par
Theorem \ref{thm:1A} indicates that $K(a;x,y)$ is a fundamental matrix of solutions of the $q$-difference system \eqref{eq:1Bq}. 
\begin{rem}{\rm 
If $|q|<|t|^{2n-2}$, the matrix $K(a;x,y)$ also satisfies the system of $p$-difference equations
\begin{equation}
T_{p,a_k}T_{p,a_l}^{-1}K(a;x,y)=K(a;x,y)M^{k,l}(a;y;q,p)^\mathrm{t}\quad (1\le k<l\le m), 
\label{eq:1Bp}
\end{equation}
by symmetry with respect to $p$ and $q$.
Although we imposed the condition $|p|<|t|^{2n-2}$ in Theorem \ref{thm:1B}, it may be possible to relax this restriction on $|p|$. 
}
\end{rem}
\subsection{Plan of this paper}
This paper is organized as follows. 
In Section \ref{section:2}, we give a review of the elliptic interpolation functions of type $BC_n$ based on 
our previous work \cite{INSlaterBC}.  
We also propose explicit formulas for 
the special values and for the transition coefficients between interpolation 
bases with different parameters.  
After this preparation, in Section \ref{section:3} we formulate 
the method of $q$-difference de Rham theory in terms of the 
$q$-difference coboundary operator 
$\nabla_{\rm sym}^{\Psi}:\mathcal{H}_{s-1,n-1}^{(p)}\to \mathcal{H}_{s-1,n}^{(p)}$
$(m=2s+2)$.  The cokernel of this operator, denoted by $H^{\Psi}_{\mathrm{sym}}$, 
plays the role of the ``symmetrized $n$th $q$-difference de Rham cohomology 
group''.  
This method enables us to describe linear relations among the hypergeometric integrals 
on the algebraic level
(Theorem \ref{thm:E cong RE}).  We prove in particular that 
$\dim_{\bC}H_{\mathrm{sym}}^{\Psi}=\binom{n+s-2}{s-2}$
giving a $\bC$-basis consisting of interpolation functions
(Theorem \ref{thm:modnabla}).
In Section \ref{section:4}, we apply the results of Section \ref{section:3} to 
derive the system of $q$-difference equations for the 
elliptic hypergeometric integrals with respect to an interpolation 
basis of $H_{\mathrm{sym}}^{\Psi}$ (Theorems \ref{thm:detcA}).  
Taking the determinant of the coefficient matrices, we obtain 
the system of $q$-difference equations for the determinant 
of the bilinear form $\la f, g\ra_{\Phi}$. 
In particular, we see that the determinant is expressed 
as a product of elliptic gamma functions, up to an unknown 
constant $c_{r,n}$ (Theorem \ref{thm:detcI}). 
We determine in Section \ref{section:5} the explicit value of the unknown constant 
through the recursive structure of asymptotic behavior of the integrals along the singularities. 
In the final section, 
we investigate the limiting cases of our main theorems as $p\to 0$,  
and derive determinant formulas for two types of $q$-hypergeometric 
integrals of type $BC_n$. 
\par\medskip
We expect that the results of this paper will be used as a foundation 
for further analysis of elliptic hypergeometric integrals. 
\section{\boldmath Elliptic interpolation functions of type $BC_n$}
\label{section:2}
As in Section \ref{subsection:1.1}, we consider the $\bC$-vector 
space $\cH_{s-1,n}^{(p)}$ ($s=1,2,\ldots$)
of all quasi-periodic $W_n$-invariant holomorphic functions 
on $\bTC^n$ of degree $s-1$ with respect to $p$.  
In this section we recall from \cite{INSlaterBC} 
basic properties of 
the interpolation functions 
$E_{\mu}(c;z;p)$ $(\mu\in Z_{s,n})$. 
We remark that our elliptic interpolation functions 
for $s=2$ are essentially the special cases of interpolation theta functions of Coskun--Gustafson \cite{CG} and Rains \cite{R} {\em attached to single columns} of partitions. In fact, $E_{\mu}(c;z;p)$ $(\mu\in Z_{s,n})$ for $s=2$ are compared explicitly with the functions of \cite{CG} and \cite{R}, respectively, in 
\cite[Introduction]{INSumBC}. 
Since our functions $E_{\mu}(c;z;p)$
for general $s$ 
are defined by an interpolation property of Lagrange type, 
they are different in nature 
from those of Coskun--Gustafson and Rains which are based on the 
triangularity with respect to partitions. 
It would be an interesting problem to clarify how these two approaches are 
related to each other. 

We also propose explicit formulas for 
the special values $E_{\mu}(c;(u)_{t,n};p)$ 
and for the transition coefficients between interpolation 
bases with different $c$ parameters.  
Throughout this section we use the base $p$ only, 
and simply set $E_{\mu}(c;z)=E_{\mu}(c;z;p)$,
$\theta(u)=\theta(u;p)$ and $e(u,v)=e(u,v;p)$. 
\subsection{Recursion formula}
When $n=1$, the interpolation functions are 
parametrized by the 
canonical basis $\ep_1,\ldots,\ep_s$ of $\bN^s$, and 
given explicitly as 
\begin{equation}
\label{eq:E(c;u);n=1}
E_{\ep_k}(c;u)=
\prod_{\substack{1\le l\le s\\[1pt] l\ne k}}
\frac{e(u,c_l)}{e(c_k,c_l)}
=
\prod_{\substack{1\le l\le s\\[1pt] l\ne k}}
\frac{\theta(c_l/u)\theta(c_lu)}{\theta(c_l/c_k)\theta(c_lc_k)}.
\end{equation}
When $n\ge 2$, they satisfy the recursion formula 
\begin{equation}
\label{eq:recursion}
\begin{split}
E_{\mu}(c;z)=\sum_{\substack{1\le k\le s\\[1pt] \mu_k>0}}
E_{\mu-\ep_k}(c;z_1,\ldots,z_{n-1})
E_{\ep_k}(t^{\mu-\ep_k}c;z_n)
\quad(\mu\in Z_{s,n}),
\end{split}
\end{equation}
where $t^{\nu}c=(t^{\nu_1}c_1,\ldots,t^{\nu_s}c_s)$.  
This formula apparently depends on the ordering 
of the variables $z_1,\ldots,z_n$, while $E_{\mu}(c;z)$ are 
$W_n$-invariant. 
\subsection{Explicit formula}
\begin{equation}\label{eq:explicit}
\begin{split}
E_{\mu}(c;z)&=
\sum_{k_1,\ldots,k_n\in\pr{1,\ldots,s}}
E_{\ep_{k_1}}(c;z_1)
E_{\ep_{k_2}}(t^{\ep_{k_1}}c;z_2)
\cdots
E_{\ep_{k_n}}(t^{\ep_{k_1}+\cdots+\ep_{k_{n-1}}}c;z_n)
\\
&=
\sum_{k_1,\ldots,k_n\in\pr{1,\ldots,s}}
\prod_{i=1}^{n}
\prod_{\substack{1\le l\le s\\[1pt] l\ne k_i}}
\frac{e(z_i,t^{\mu^{(i)}}c_l)}
{e(t^{\mu^{(i-1)}_{k_i}}c_{k_i},t^{\mu^{(i-1)}_l}c_l)},
\end{split}
\end{equation}
where 
$\mu^{(i)}=\ep_{k_1}+\cdots+\ep_{k_i}$ 
($i=1,\ldots,n$). 
\subsection{Interpolation functions on the vertices and the faces} 
The indexing set 
$Z_{s,n}$ for the interpolation functions are the lattice points 
on the $(s-1)$-simplex 
\begin{equation*}
\prm{\mu=(\mu_1,\ldots,\mu_s)\in\bR^s}{
\mu_1+\cdots+\mu_s=n;\quad
\mu_k\ge 0\ \ (k=1,\ldots,s)
}
\end{equation*}
in $\bR^{s}$. 
On the vertices $\mu=n\ep_k$ ($k=1,\ldots,s$), 
the interpolation functions are factorized as 
\begin{equation*}
E_{n\ep_k}(c;z)=\prod_{\substack{1\le l\le s\\[1pt] l\ne k}}
\frac{\prod_{i=1}^{n}e(z_i,c_l)} {e(c_k,c_l)_{n}}
\quad(k=1,\ldots,s),
\end{equation*}
where $e(u,v)_k=e(u,v)e(tu,v)\cdots e(t^{k-1}u,v)$ 
denotes the $t$-shifted factorial with respect to the first argument
($k=0,1,2,\ldots$). 
When $\mu\in Z_{s,n}$ is on the $s$th face ($\mu_s=0$), they 
are expressed as 
\begin{equation}\label{eq:Emusthface}
E_{(\mu_1,\ldots,\mu_{s-1},0)}(c_1,\ldots,c_s;z)
=
E_{(\mu_1,\ldots,\mu_{s-1})}(c_1,\ldots,c_{s-1};z)
\frac{\prod_{i=1}^{n}e(z_i,c_s)}
{\prod_{l=1}^{s-1}e(c_l,c_s)_{\mu_l}}
\end{equation}
in terms of the interpolation functions with $s-1$ parameters
$(c_1,\ldots,c_{s-1})$. 
\subsection{Dual Cauchy formula}
For each 
$\mu\in\bN^s$ we define a holomorphic function 
$F_{\mu}(c;w)$ in $s-1$ variables 
$w=(w_1,\ldots,w_{s-1})$ by 
\begin{equation*}
F_{\mu}(c;w)=\prod_{k=1}^{s}\prod_{l=1}^{s-1}e(c_k,w_l)_{\mu_k}. 
\end{equation*}
Then we have the {\it dual Cauchy formula}
\begin{equation}\label{eq:dualCauchy}
\prod_{j=1}^{n}\prod_{l=1}^{s-1}e(z_j,w_l)
=\sum_{\mu\in Z_{s,n}}E_{\mu}(c;z)F_{\mu}(c;w). 
\end{equation}
\subsection{Partition of the variables}
For the variables $z=(z',z'')\in (\mathbb{C}^*)^n$ divided into two parts 
$z'=(z_1,\ldots,z_{l})$ and $z''=(z_{l+1},\ldots,z_{n})$, the interpolation function $E_{\lambda}(c;z)$ is expressed as 
\begin{equation*}
E_{\lambda}(c;z)=\sum_{
\substack{|\mu|=l,\,|\nu|=n-l \\ \mu+\nu=\lambda}
 }\,E_{\mu}(c;z')E_{\nu}(t^{\mu}c;z''). 
\end{equation*}
\subsection{Special value}
\begin{thm}
\label{thm: special value}
 For each $\mu\in Z_{s,n}$ the special value of 
 $E_{\mu}(c;z)$ at 
$z=(u)_{t,n}=(u,tu,\ldots,t^{n-1}u) $
is given explicitly by 
\begin{equation}\label{eq:evalutn}
E_{\mu}(c;(u)_{t,n})
=\frac{
[t^{-n}]_{n}
\prod_{i=1}^{s}[u/c_i]_{n-\mu_i}
[t^{\mu_i}uc_i]_{n-\mu_i}
}{
\prod_{i,j=1}^{s}
[t^{-\mu_j}c_i/c_j]_{\mu_i}
\prod_{1\le i<j\le s}[c_ic_j]_{\mu_i+\mu_j}
},
\end{equation}
where 
$[u]=u^{-\hf}\theta(u)$ and $[u]_k=[u][tu]\cdots [t^{k-1}u]$. 
\end{thm}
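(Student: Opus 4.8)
The plan is to prove \eqref{eq:evalutn} by induction on $n$, peeling off one variable at a time with the recursion formula \eqref{eq:recursion}. Write $\Psi^{(n)}_\mu(u)$ for the right-hand side of \eqref{eq:evalutn}. When $n=1$ the interpolation functions are given in closed form by \eqref{eq:E(c;u);n=1}, and since every $t$-shifted factorial of length $n-\mu_i$ with $\mu_i=n$ is empty, $\Psi^{(1)}_{\ep_k}(u)$ collapses to $\prod_{l\ne k}e(u,c_l)/e(c_k,c_l)$; this settles the base case. Assume the formula holds for $n-1$. Evaluating \eqref{eq:recursion} at $z=(u)_{t,n}$, so that $(z_1,\dots,z_{n-1})=(u)_{t,n-1}$ and $z_n=t^{n-1}u$, and inserting the induction hypothesis into the $(n-1)$-variable factors together with \eqref{eq:E(c;u);n=1} into the one-variable factors, we are led to
\[
E_\mu(c;(u)_{t,n})=\sum_{k:\,\mu_k>0}\Psi^{(n-1)}_{\mu-\ep_k}(u)\,
\prod_{l\ne k}\frac{e(t^{n-1}u,\,t^{\mu_l}c_l)}{e(t^{\mu_k-1}c_k,\,t^{\mu_l}c_l)}\;=:\;S(u),
\]
an explicit finite sum of products of theta functions; it remains to prove the identity $S(u)=\Psi^{(n)}_\mu(u)$.

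Both $S(u)$ and $\Psi^{(n)}_\mu(u)$ are single-valued holomorphic functions of $u\in\bC^\ast$ obeying the same transformation law $F(pu)=F(u)\prod_{i=0}^{n-1}(pt^{2i}u^2)^{-s+1}$ under $u\mapsto pu$ — for $S$ because $E_\mu(c;z)\in\cH^{(p)}_{s-1,n}$ is quasi-periodic of degree $s-1$ in each $z_i$, and for $\Psi^{(n)}_\mu$ by a direct computation from $\theta(pu;p)=-u^{-1}\theta(u;p)$. Hence $S$ and $\Psi^{(n)}_\mu$ are holomorphic theta functions in $u$ for one and the same multiplier of degree $2n(s-1)$, so that each has exactly $2n(s-1)$ zeros (counted with multiplicity) on $\bC^\ast/p^{\bZ}$. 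The zeros of $\Psi^{(n)}_\mu$ can be read off from its numerator in \eqref{eq:evalutn}: for generic $c$ they are the $2n(s-1)$ distinct points $u=t^{-a}c_i$ and $u=t^{-\mu_i-a}c_i^{-1}$, with $0\le a\le n-\mu_i-1$ and $i=1,\dots,s$. It therefore suffices to check that $S(u)$ vanishes at each of them; then $S/\Psi^{(n)}_\mu$ is a holomorphic nowhere-vanishing elliptic function, hence a constant $\kappa$. This vanishing is verified term by term: at such a point a given summand of $S$ is killed either by a zero of its factor $\Psi^{(n-1)}_{\mu-\ep_k}(u)$ — whose location is supplied by the induction hypothesis — or, when that factor happens not to vanish, by a coincidence of two arguments in the one-variable factor, via $e(v,v)=v^{-1}\theta(v^2)\theta(1)=0$. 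Finally $\kappa=1$ is obtained from one further explicit evaluation (comparing a leading Taylor coefficient of $S$ and $\Psi^{(n)}_\mu$ at one of the common zeros, or via a limiting argument in the parameters), the $(n-1)$-variable input again coming from the induction hypothesis.

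The main obstacle is the bookkeeping in the second step: one must enumerate the $2n(s-1)$ zeros of $\Psi^{(n)}_\mu$ correctly, and then, for each, run the case analysis over the summation index $k$ and over the block $i$ to which the zero belongs, deciding in each case which mechanism — an inductive zero of $\Psi^{(n-1)}_{\mu-\ep_k}$ or the collision identity $e(v,v)=0$ — annihilates the summand; keeping track of the $t$-shifted factorials throughout, and separately pinning down the constant $\kappa$, is where the real work lies. For comparison one can avoid the induction altogether: regard $u\mapsto E_\mu(c;(u)_{t,n})$ directly as a holomorphic theta function of the above multiplier, determine its full zero divisor from the face factorization \eqref{eq:Emusthface} together with the $W_n$-invariance of $E_\mu$ (which produces the vanishing at the ``extra'' points $t^{-a}c_i$ with $a\ge1$), and fix the constant from the factorized value $E_{n\ep_k}(c;z)$ on a vertex; this is essentially the same computation organized differently.
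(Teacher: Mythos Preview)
Your overall induction via \eqref{eq:recursion} matches the paper's, but the way you close the inductive step differs: the paper never compares zero divisors of $S$ and $\Psi^{(n)}_\mu$.  Instead it checks directly that the candidate $C_\mu:=\Psi^{(n)}_\mu(u)$ satisfies the same one-step recurrence as $E_\mu(c;(u)_{t,n})$, which after dividing by $C_\mu$ collapses to a single partial-fraction identity for theta functions, verified by specialising
\[
\prod_{j=1}^{s}\frac{[zc_j]}{[zt^{\mu_j}c_j]}
=\sum_{k=1}^{s}\frac{[zt^{\mu_k-n}c_k][t^{-\mu_k}]}{[zt^{\mu_k}c_k][t^{-n}]}
\prod_{j\ne k}\frac{[c_j/t^{\mu_k}c_k]}{[t^{\mu_j}c_j/t^{\mu_k}c_k]}
\]
at $z=t^{n-1}u$.

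Your term-by-term vanishing argument has a genuine gap.  Take the second-type zero $u_0=t^{-(n-1)}c_i^{-1}$ of $\Psi^{(n)}_\mu$ (the case $a=n-\mu_i-1$) for an index $i$ with $0<\mu_i<n$, so that $t^{n-1}u_0=c_i^{-1}$.  For each summand $k$ of $S(u_0)$:
\begin{itemize}
\item The inductive factor $\Psi^{(n-1)}_{\mu-\ep_k}(u_0)$ does \emph{not} vanish: its zeros in the $c_i^{-1}$ family are $t^{-(\mu-\ep_k)_i-b}c_i^{-1}$ with $0\le b\le n-2-(\mu-\ep_k)_i$, hence lie in $\{t^{-(\mu_i-1)}c_i^{-1},\dots,t^{-(n-2)}c_i^{-1}\}$; the point $t^{-(n-1)}c_i^{-1}$ sits just outside, whether $k=i$ or $k\ne i$.
\item The one-variable factor does not vanish either: the only candidate is the $l=i$ term $e(c_i^{-1},t^{\mu_i}c_i)$ (present when $k\ne i$), but $e(c_i^{-1},t^{\mu_i}c_i)=c_i\,\theta(t^{\mu_i})\,\theta(t^{-\mu_i}c_i^{-2})$ is generically nonzero once $\mu_i\ge1$; neither the collision $e(v,v)=0$ nor $e(v,v^{-1})=0$ applies here.
\end{itemize}
So no individual summand dies at $u_0$, and $S(u_0)=0$ holds only through a nontrivial cancellation among all terms.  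That cancellation is exactly the specialisation $z=c_i^{-1}$ of the partial-fraction identity above --- and once you invoke it, you may as well run the paper's argument directly and skip the zero counting.  Your alternative outline via face factorisation and $W_n$-invariance has the same difficulty: \eqref{eq:Emusthface} only supplies zeros when some $\mu_j=0$, and no element of $W_n$ carries $(u)_{t,n}$ to another point of the same form, so the remaining zeros still require a separate identity.  The determination of the constant $\kappa$ is also left vague; the paper avoids this entirely, since the recurrence together with the $n=1$ case already pins down the value uniquely.
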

\begin{rem} {\rm
The above special value $E_{\mu}(c;(u)_{t,n})$ is expressed as 
\begin{equation*}
E_{\mu}(c;(u)_{t,n})
=\Big(t^{-\binom{n}{2}}u^{-n}
\prod_{i=1}^{s}
t^{\binom{\mu_i}{2}}c_i^{\mu_i}\Big)^{s-1}
\frac{
\theta(t^{-n})_{n}
\prod_{i=1}^{s}\theta(u/c_i)_{n-\mu_i}
\theta(t^{\mu_i}uc_i)_{n-\mu_i}
}{
\prod_{i,j=1}^{s}
\theta(t^{-\mu_j}c_i/c_j)_{\mu_i}
\prod_{1\le i<j\le s}\theta(c_ic_j)_{\mu_i+\mu_j}
}
\end{equation*}
in terms of the $t$-shifted factorials 
$\theta(u)_k=\theta(u)\theta(tu)\cdots\theta(t^{k-1}u)$ 
$(k=0,1,2,\ldots)$ of the theta function $\theta(u)$.
}\end{rem}
\begin{proof}{Proof of Theorem \ref{thm: special value}}
By \eqref{eq:recursion}
the special value $E_{\mu}(c;(u)_{t,n})$ satisfies 
the recurrence formula 
\begin{equation*}
E_{\mu}(c;(u)_{t,n})=\sum_{\substack{1\le k\le s\\[1pt]\mu_k>0}}
E_{\mu-\ep_k}(c;(u)_{t,n-1})
\prod_{\substack{1\le l\le s\\[1pt]l\ne k}}\frac{e(t^{n-1}u,t^{\mu_l}c_l)}
{e(t^{\mu_k-1}c_k,t^{\mu_l}c_l)}. 
\end{equation*}
Denoting by $C_\mu$ $(n=|\mu|)$ the right-hand side of \eqref{eq:evalutn},
we verify that $C_{\mu}$ satisfies the same recurrence formula. 
Since we have 
\begin{equation*}
C_{\mu-\ep_k}=
C_{\mu}
\frac{
[t^{-\mu_k}][t^{\mu_k-1}uc_k]
}{
[t^{-n}][t^{n-1}uc_k]
}
\prod_{\substack{1\le j\le s\\[1pt] j\ne k}}
\frac
{
[t^{\mu_k-\mu_j-1}c_k/c_j]
[c_j/t^{\mu_k}c_k]
[t^{\mu_k+\mu_j-1}c_kc_j]
}
{
[t^{\mu_j}c_j/t^{\mu_k}c_k]
[t^{n-\mu_j-1}u/c_j]
[t^{n-1}uc_j]
}
\end{equation*}
for $\mu_k>0$, 
the above recurrence formula is equivalent to 
\begin{equation*}
\begin{split}
&\sum_{k=1}^{s}
\frac{
[t^{-\mu_k}][t^{\mu_k-1}uc_k]
}{
[t^{-n}][t^{n-1}uc_k]
}
\prod_{\substack{1\le j\le s\\[1pt] j\ne k}}
\frac
{
[t^{\mu_k-\mu_j-1}c_k/c_j]
[c_j/t^{\mu_k}c_k]
[t^{\mu_k+\mu_j-1}c_kc_j]
}
{
[t^{\mu_j}c_j/t^{\mu_k}c_k]
[t^{n-\mu_j-1}u/c_j]
[t^{n-1}uc_j]
}
\\
&\qquad\qquad\qquad\cdot
\prod_{\substack{1\le j\le s\\[1pt] j\ne k}}
\frac{
[t^{\mu_j}c_j/t^{n-1}u]
}{
[t^{\mu_j}c_j/t^{\mu_k-1}c_k]
}
\frac{
[t^{\mu_j}c_jt^{n-1}u]
}{
[t^{\mu_j}c_jt^{\mu_k-1}c_k]
}=1,
\end{split}
\end{equation*} 
namely, 
\begin{equation*}
\sum_{k=1}^{s}
\frac{
[t^{-\mu_k}]
[t^{\mu_k-1}uc_k]
}{
[t^{-n}]
[t^{n+\mu_k-1}uc_k]
}
\prod_{\substack{1\le j\le s\\[1pt] j\ne k}}
\frac
{
[c_j/t^{\mu_k}c_k]
}
{
[t^{\mu_j}c_j/t^{\mu_k}c_k]
}
=
\prod_{j=1}^{s}
\frac{
[t^{n-1}uc_j]
}{
[t^{n+\mu_j-1}uc_j]
}.
\end{equation*}
This formula follows from the partial fraction decomposition 
\begin{equation*}
\prod_{j=1}^{s}
\frac{[zc_j]}
{[zt^{\mu_j}c_j]}
=\sum_{k=1}^{s}
\frac{
[zt^{\mu_k-n}c_k][t^{-\mu_k}]
}{[zt^{\mu_k}c_k][t^{-n}]}
\prod_{\substack{1\le j\le s\\[1pt] j\ne k}}
\frac{[c_j/t^{\mu_k}c_k]
}{[t^{\mu_j}c_j/t^{\mu_k}c_k]}
\end{equation*}
as the special case where $z=t^{n-1}u$.
\end{proof}
\subsection{Change of parameters}
We consider to expand the interpolation function 
$E_{\mu}(c;z)=E_{\mu}(c_1,\ldots,c_s;z)$ in terms of another interpolation basis 
$E_{\nu}(c_1,\ldots,c_{s-1},d_s;z)$ $(\nu\in Z_{s,n})$ 
by replacing $c_s$ with $d_s$:
\begin{equation}\label{eq:transC}
E_{\mu}(c',c_s;z)=\sum_{\nu\in Z_{s,n}}
C_{\mu,\nu}(c';c_s,d_s)E_{\nu}(c',d_s;z),
\end{equation}
where 
$c'=(c_1,\ldots,c_{s-1})$. 
The transition coefficients $C_{\mu,\nu}(c';c_s,d_s)$ in this case are computed as follows:
\begin{equation*}
\begin{split}
C_{\mu,\nu}(c';c_s,d_s)&=
E_{\mu}(c; (c')_{t,\nu'},(d_s)_{t,\nu_s})
\\
&=
\sum_{
\substack{
|\lambda|=n-\nu_s,\,|\rho|=\nu_s
\\\lambda+\rho=\mu}
}
E_{\lambda}(c;(c')_{t,\nu'})
E_{\rho}(t^{\lambda}c; (d_s)_{t,\nu_s}),
\end{split}
\end{equation*}
where $\nu'=(\nu_1,\ldots,\nu_{s-1})$. 
Since 
\begin{equation*}
E_{\lambda}(c;(c')_{t,\nu'})
=
E_{\lambda}(c;(c)_{t,\nu-\nu_s\ep_s})
=\delta_{\lambda,\nu-\nu_s\ep_s},  
\end{equation*}
we have  
\begin{equation*}
C_{\mu,\nu}(c';c_s,d_s)
=
E_{\mu-\nu+\nu_s\ep_s}(t^{\nu-\nu_s\ep_s}c; (d_s)_{t,\nu_s})
=
E_{(\mu'-\nu',\mu_s)}(t^{\nu'}c',c_s; (d_s)_{t,\nu_s}),
\end{equation*}
which can be computed by Theorem \ref{thm: special value}.
\begin{thm}\label{thm:transC}
When we change the parameters  
$c=
(c',c_s)$ to 
$(c',d_s)$, we have 
\begin{equation*}
E_{\mu}(c',c_s;z)=
\sum_{\substack{\nu\in Z_{s,n}\\ \mu'\ge\nu'}}
C_{\mu,\nu}(c';c_s,d_s)E_{\nu}(c',d_s;z)
\quad (\mu\in Z_{s,n}),
\end{equation*}
where $\mu'\ge \nu'$ means that $\mu_k\ge \nu_k$ $(k=1,\ldots,s-1)$. 
The coefficients $C_{\mu,\nu}(c';c_s,d_s)$ are given explicitly by 
\begin{equation*}
\begin{split}
C_{\mu,\nu}(c';c_s,d_s)
&
=
\frac{
[t^{-\nu_s}]_{\nu_s}
}{
[t^{-\mu_s}]_{\mu_s}
}
\frac{
[d_s/t^{\mu_s}c_s]_{\nu_s}
}{
[d_s/t^{\mu_s}c_s]_{\mu_s}
}
\frac{
[c_sd_s]_{\nu_s}
}{
[c_sd_s]_{\mu_s}
}
\\
&\quad\cdot
\prod_{i=1}^{s-1}
\frac{[d_s/t^{\mu_i}c_i]_{\nu_s}}
{[c_s/t^{\mu_i}c_i]_{\mu_s}}
\frac{[c_i/t^{\mu_s}c_s]_{\nu_i}
}{[c_i/t^{\mu_s}c_s]_{\mu_i}}
\frac{
[d_s/t^{\nu_i}c_i]_{\nu_i}
}{
[d_s/t^{\mu_i}c_i]_{\mu_i}
}
\frac{
[c_ic_s]_{\nu_i}[c_id_s]_{\nu_i+\nu_s}
}{
[c_ic_s]_{\mu_i+\mu_s}[c_id_s]_{\mu_i}
}
\\
&\quad\cdot
\prod_{i,j=1}^{s-1}
\frac{
[c_i/t^{\mu_j}c_j]_{\nu_i}
}{[c_i/t^{\mu_j}c_j]_{\mu_i}}
\prod_{1\le i<j\le s-1}
\frac{
[c_ic_j]_{\nu_i+\nu_j}
}{
[c_ic_j]_{\mu_i+\mu_j}
}.
\end{split}
\end{equation*}
\hfill $\square$
\end{thm}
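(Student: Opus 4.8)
{Plan of proof of Theorem \ref{thm:transC}}
The first step is the reduction carried out in the discussion preceding the statement. Evaluating the expansion \eqref{eq:transC} at the reference point $(c',d_s)_{t,\nu}=\big((c')_{t,\nu'},(d_s)_{t,\nu_s}\big)$ and using the interpolation property of the basis $\{E_\nu(c',d_s;z)\}$, then applying the partition-of-variables formula with $z'=(c')_{t,\nu'}$, $z''=(d_s)_{t,\nu_s}$, and finally using once more the interpolation property $E_\lambda(c;(c')_{t,\nu'})=\delta_{\lambda,\nu-\nu_s\ep_s}$, one obtains $C_{\mu,\nu}(c';c_s,d_s)=E_{(\mu'-\nu',\,\mu_s)}(t^{\nu'}c',c_s;(d_s)_{t,\nu_s})$. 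In particular this vanishes unless $\mu'-\nu'\in\bN^{s-1}$, i.e.\ $\mu'\ge\nu'$, which is precisely the range of summation in the theorem; moreover, since $|\mu'-\nu'|+\mu_s=|\mu|-|\nu'|=\nu_s$, the multiindex $\tilde\mu:=(\mu'-\nu',\mu_s)$ lies in $Z_{s,\nu_s}$, whence $\mu_s\le\nu_s$, $\mu_i\le\nu_i+\nu_s$ and $\mu_i+\mu_s\ge\nu_i$ for all $i\le s-1$.

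It then remains to evaluate this special value by Theorem \ref{thm: special value} and to rearrange the answer into the claimed product form. Concretely I would apply formula \eqref{eq:evalutn} with $n$ replaced by $N=\nu_s$, with $u=d_s$, with parameters $(\tilde c_1,\ldots,\tilde c_s)=(t^{\nu_1}c_1,\ldots,t^{\nu_{s-1}}c_{s-1},c_s)$, and with the multiindex $\tilde\mu$ above; since $\tilde\mu\in Z_{s,\nu_s}$, all the subscripts $N-\tilde\mu_i$ are nonnegative, so the right-hand side of \eqref{eq:evalutn} is a genuine ratio of $t$-shifted factorials $[\,\cdot\,]_k$. After substituting $\tilde c_i=t^{\nu_i}c_i$ for $i\le s-1$ (which only shifts the $t$-power inside each argument) I would split this expression into four blocks --- the prefactor $[t^{-N}]_N$; the ``$s$--$s$'' block (the $i=j=s$ factors of \eqref{eq:evalutn} together with the $i=s$ numerator factor); the mixed ``$i$--$s$'' and ``$s$--$j$'' factors with $1\le i,j\le s-1$; and the ``$i$--$j$'' factors with $1\le i,j\le s-1$, including the diagonal $i=j$ --- and match each block with the corresponding line of the stated formula. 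The only identity needed for this translation, besides \eqref{eq:evalutn} itself, is the elementary rule $[u]_a[t^au]_b=[u]_{a+b}$, equivalently $[u]_a/[u]_b=[t^bu]_{a-b}$ for $a\ge b\ge0$: it turns each ratio $[\,\cdot\,]_{\nu_\bullet}/[\,\cdot\,]_{\mu_\bullet}$ appearing in the theorem into an honest partial factorial that coincides with the corresponding factor of \eqref{eq:evalutn}, and the inequalities among the components of $\mu,\nu$ recorded above are exactly what make $a\ge b\ge0$ hold in each instance. For example, the diagonal $i=j$ term of the $\prod_{i,j=1}^{s-1}$ product in the statement equals $[t^{-\mu_i}]_{\nu_i}/[t^{-\mu_i}]_{\mu_i}=1/[t^{-(\mu_i-\nu_i)}]_{\mu_i-\nu_i}$, which accounts for the factor $1/[t^{-\tilde\mu_i}\tilde c_i/\tilde c_i]_{\tilde\mu_i}$ of \eqref{eq:evalutn}, while the prefactor $[t^{-N}]_N=[t^{-\nu_s}]_{\nu_s}$ supplies the numerator of the leading ratio $[t^{-\nu_s}]_{\nu_s}/[t^{-\mu_s}]_{\mu_s}$.

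I expect the only real obstacle to be the bookkeeping in this last rearrangement: one has to track the $t$-power inside every argument once the shift $\tilde c_i=t^{\nu_i}c_i$ is inserted, and correctly redistribute the diagonal contributions of the double product $\prod_{i,j=1}^s$ and the cross terms of $\prod_{1\le i<j\le s}$ between the factors carrying the index $s$ and those carrying indices $\le s-1$. This is entirely mechanical --- no new identity is involved --- but lengthy, and the cleanest route is to verify the four blocks one at a time rather than to transform the whole expression at once.
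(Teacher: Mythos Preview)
Your proposal is correct and follows exactly the paper's approach: the discussion preceding the theorem already carries out the reduction $C_{\mu,\nu}(c';c_s,d_s)=E_{(\mu'-\nu',\mu_s)}(t^{\nu'}c',c_s;(d_s)_{t,\nu_s})$ via the partition-of-variables formula and the interpolation property, and then simply remarks that this special value ``can be computed by Theorem \ref{thm: special value}'', stating the result without writing out the bookkeeping. Your description of the substitution $(n,u,c,\mu)\mapsto(\nu_s,d_s,(t^{\nu'}c',c_s),(\mu'-\nu',\mu_s))$ and of the repeated use of $[u]_a/[u]_b=[t^bu]_{a-b}$ is precisely the omitted mechanical step.
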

\begin{rem}{\rm
In terms of the ordinary 
$\theta(u)$ notation, we have 
\begin{equation*}
\begin{split}
C_{\mu,\nu}(c';c_s,d_s)
&
=
\Big(
t^{\binom{\mu_s}{2}-\binom{\nu_s}{2}}
c_s^{\mu_s}d_s^{-\nu_s}
\prod_{i=1}^{s-1}t^{\binom{\mu_i}{2}-\binom{\nu_i}{2}}
c_i^{\mu_i-\nu_i}
\Big)^{s-1}
\\
&\quad\cdot
\frac{
\theta(t^{-\nu_s})_{\nu_s}
}{
\theta(t^{-\mu_s})_{\mu_s}
}
\frac{
\theta(d_s/t^{\mu_s}c_s)_{\nu_s}
}{
\theta(d_s/t^{\mu_s}c_s)_{\mu_s}
}
\frac{
\theta(c_sd_s)_{\nu_s}
}{
\theta(c_sd_s)_{\mu_s}
}
\\
&\quad\cdot
\prod_{i=1}^{s-1}
\frac{\theta(d_s/t^{\mu_i}c_i)_{\nu_s}}
{\theta(c_s/t^{\mu_i}c_i)_{\mu_s}}
\frac{\theta(c_i/t^{\mu_s}c_s)_{\nu_i}
}{\theta(c_i/t^{\mu_s}c_s)_{\mu_i}}
\frac{
\theta(d_s/t^{\nu_i}c_i)_{\nu_i}
}{
\theta(d_s/t^{\mu_i}c_i)_{\mu_i}
}
\frac{
\theta(c_ic_s)_{\nu_i}\theta(c_id_s)_{\nu_i+\nu_s}
}{
\theta(c_ic_s)_{\mu_i+\mu_s}\theta(c_id_s)_{\mu_i}
}
\\
&\quad\cdot
\prod_{i,j=1}^{s-1}
\frac{
\theta(c_i/t^{\mu_j}c_j)_{\nu_i}
}{\theta(c_i/t^{\mu_j}c_j)_{\mu_i}}
\prod_{1\le i<j\le s-1}
\frac{
\theta(c_ic_j)_{\nu_i+\nu_j}
}{
\theta(c_ic_j)_{\mu_i+\mu_j}
}.
\end{split}
\end{equation*}
}\end{rem}
\par
In the succeeding section, Theorem \ref{thm:transC} will be applied to 
interpolation functions with respect to subsets of the parameters 
$a=(a_1,\ldots,a_m)$ of $\Phi(z)$ where $m=2s+2$.  For each 
subset $K\subseteq\pr{1,\ldots,m}$ of the indexing set 
with $|K|=s$, we consider the 
interpolation basis 
\begin{equation*}
E_{\mu}(a_{K};z)=E_{\mu}(a_{K};z;p)\qquad(\mu\in Z_{K,n})
\end{equation*}
of $\cH_{s-1,n}^{(p)}$ with respect to the parameters 
$a_{K}=(a_k)_{k\in K}$, where 
\begin{equation*}
Z_{K,n}=\prm{\mu=(\mu_k)_{k\in K}\in\bN^K}
{\ts |\mu|=\sum_{k\in K}\mu_k=n}.  
\end{equation*}
Let $I\subseteq\pr{1,\ldots,m}$ be a subset with $|I|=s-1$,
and choose two indices $k,l\in\pr{1,\ldots,m}\backslash I$.  Then 
Theorem \ref{thm:transC} is reformulated in terms of the transition 
between the two 
bases $E_{\mu}(a_{I\cup\pr{k}};z)$ and 
$E_{\nu}(a_{I\cup\pr{l}};z)$: 
\begin{equation*}
E_{\mu}(a_{I\cup\pr{k}};z)=\sum_{\nu\in Z_{I\cup\pr{l},n}}
C^{I;k,l}_{\mu,\nu}\,E_{\nu}(a_{I\cup\pr{l}};z)
\qquad (\mu\in Z_{I\cup\pr{k},n}). 
\end{equation*}
The transition coefficients $C^{I;k,l}_{\mu,\nu}$ are nonzero only if 
$\mu_{\widehat{k}}\ge \nu_{\widehat{l}}$, namely 
$\mu_i\ge \nu_i$ ($i\in I$);  
they are given explicitly by 
\begin{equation*}
\begin{split}
C_{\mu,\nu}^{I;k,l}
&=
\frac{
[t^{-\nu_l}]_{\nu_l}
}{
[t^{-\mu_k}]_{\mu_k}
}
\frac{
[a_l/t^{\mu_k}a_k]_{\nu_l}
}{
[a_l/t^{\mu_k}a_k]_{\mu_k}
}
\frac{
[a_ka_l]_{\nu_l}
}{
[a_ka_l]_{\mu_k}
}
\\
&\quad\cdot
\prod_{i\in I}
\frac{[a_l/t^{\mu_i}a_i]_{\nu_l}}
{[a_k/t^{\mu_i}a_i]_{\mu_k}}
\frac{[a_i/t^{\mu_k}a_k]_{\nu_i}
}{[a_i/t^{\mu_k}a_k]_{\mu_i}}
\frac{
[a_l/t^{\nu_i}a_i]_{\nu_i}
}{
[a_l/t^{\mu_i}a_i]_{\mu_i}
}
\frac{
[a_ia_k]_{\nu_i}[a_ia_l]_{\nu_i+\nu_l}
}{
[a_ia_k]_{\mu_i+\mu_k}[a_ia_l]_{\mu_i}
}
\\
&\quad\cdot
\prod_{i,j\in I}
\frac{
[a_i/t^{\mu_j}a_j]_{\nu_i}
}{[a_i/t^{\mu_j}a_j]_{\mu_i}}
\prod_{i,j\in I;\,i<j}
\frac{
[a_ia_j]_{\nu_i+\nu_j}
}{
[a_ia_j]_{\mu_i+\mu_j}
}.
\end{split}
\end{equation*}
We denote by $C^{I;k,l}=\big(C^{I;k,l}_{\mu,\nu}\big)_{\mu,\nu}$ the 
square matrix with row indices $\mu\in Z_{I\cup\pr{k},n}$ 
and column indices $\nu\in Z_{I\cup\pr{l},n}$ arranged by the 
partial ordering of $\mu_{\widehat{k}},\nu_{\widehat{l}}\in \bN^I$. 
Then the diagonal entries of $C^{I;k,l}$ are given by 
\begin{equation}\label{eq:transCmunu}
\begin{split}
C_{\mu,\nu}^{I;k,l}
&=
\prod_{i\in I}
\frac{[a_l/t^{\mu_i}a_i]_{\mu_k}}
{[a_k/t^{\mu_i}a_i]_{\mu_k}}
\frac{
[t^{\mu_i}a_ia_l]_{\mu_k}
}{
[t^{\mu_i}a_ia_k]_{\mu_k}
}
=
\prod_{i\in I}
\frac{e(a_l,t^{\mu_i}a_i)_{\mu_k}}
{e(a_k,t^{\mu_i}a_i)_{\mu_k}}. 
\end{split}
\end{equation}
for each pair $(\mu,\nu)$ such that 
$\mu_i=\nu_i$ $(i\in I)$ and $\mu_k=\nu_l$. 
Hence the determinant of 
the transition matrix 
$C^{I;k,l}=\big(C_{\mu,\nu}^{I;k,l}\big)_{\mu,\nu}$ 
is computed as 
\begin{equation}\label{eq:Cdet}
\begin{split}
\det C^{I;k,l}
&=
\prod_{0\le u+v\le n}
\prod_{i\in I}
\left(
\frac{e(a_l,t^{u}a_i)_{v}}
{e(a_k,t^{u}a_i)_{v}}
\right)^{\binom{n-u-v+s-3}{s-3}}
\\
&
=
\prod_{0\le u+v<n}
\prod_{i\in I}
\left(
\frac{e(t^{u}a_l,t^{v}a_i)}
{e(t^{u}a_k,t^{v}a_i)}
\right)^{\binom{n-u-v+s-3}{s-2}}. 
\end{split}
\end{equation}
\section{\boldmath $q$-Difference de Rham theory}
\label{section:3}
\subsection{Definition of $H_{\mathrm{sym}}^{\Psi}$}
In this section we suppose that $m=2r+4$ ($r=1,2,\ldots$) 
and set $s=r+1$. 
Fixing a nonzero $W_n$-invariant 
holomorphic function 
$g(z)\in\cH^{(q)}_{s-1,n}$ with quasi-periodicity 
of degree $s-1$, we set 
\begin{equation*}
\Psi(z)=\Phi(z)g(z). 
\end{equation*}
For each $i=1,\ldots,n$, 
the function 
\begin{equation*}
b_i^{\Psi}(z)=\frac{T_{q,z_i}\Psi(z)}{\Psi(z)}\qquad(i=1,\ldots,n)
\end{equation*}
is computed as follows: 
\begin{equation*}
\begin{split}
b_i^\Psi(z)
&=
\frac{\theta(q^{-1}z_i^{-2};p)\theta(q^{-2}z_i^{-2};p)
}{q^{s-1}z_i^{2s-2}\theta(z_i^2;p)\theta(qz_i^{2};p)}
\prod_{k=1}^{m}
\frac{\theta(a_kz_i;p)}{\theta(a_kq^{-1}z_i^{-1};p)}
\prod_{\substack{1\le j\le n\\[1pt] j\ne i}}
\frac{\theta(tz_iz_j^{\pm1};p)}
{\theta(z_iz_j^{\pm1};p)}
\frac{\theta(q^{-1}z_i^{-1}z_j^{\pm1};p)}
{\theta(tq^{-1}z_i^{-1}z_j^{\pm1};p)}
\\
&=
-
\frac{q^{-s}z_i^{-s}\theta(q^{-2}z_i^{-2};p)
}{z_i^{s}\theta(z_i^2;p)}
\prod_{k=1}^{m}
\frac{\theta(a_kz_i;p)}{\theta(a_kq^{-1}z_i^{-1};p)}
\prod_{\substack{1\le j\le n\\[1pt] j\ne i}}
\frac{\theta(tz_iz_j^{\pm1};p)}
{\theta(z_iz_j^{\pm1};p)}
\frac{\theta(q^{-1}z_i^{-1}z_j^{\pm1};p)}
{\theta(tq^{-1}z_i^{-1}z_j^{\pm1};p)}. 
\end{split}
\end{equation*}
Hence, $b_i^{\Psi}(z)$ is expressed as 
\begin{equation*}
b_i^{\Psi}(z)=-\frac{f_i^{+}(z)}{T_{q,z_i}f_i^{-}(z)},
\end{equation*}
where 
\begin{equation*}
f_i^{+}(z)=\frac{\prod_{k=1}^{m}\theta(a_kz_i;p)
}{z_i^{s}\theta(z_i^2;p)}\prod_{\substack{1\le j\le n\\[1pt] j\ne i}}
\frac{\theta(tz_iz_j^{\pm1};p)}{\theta(z_iz_j^{\pm1};p)},\qquad 
f_i^{-}(z)=f_i^{+}(z^{-1}).
\end{equation*}
This implies that 
\begin{equation*}
(1-T_{q,z_i})\left(\Psi(z)f_i^{-}(z)\right)
=
\Psi(z)
\left(f_i^{-}(z)-b_i^{\Psi}(z)T_{q,z_i}f_i^{-}(z)\right)
=\Psi(z)(f_i^{-}(z)+f_i^{+}(z)). 
\end{equation*}
Note that $f_i^{+}(z)$ and $f_i^{-}(z)$ have the following 
quasi-periodicity with respect to the $p$-shifts: 
\begin{equation*}
\begin{array}{lll}
&
T_{p,z_i}f_i^{+}(z)=f_i^{+}(z)\,(pz_i^2)^{-s+1}
(t^{2n-2}a_1\cdots a_m)^{-1},\quad
&T_{p,z_j}f_i^{+}(z)=f_i^{+}(z)\quad(j\ne i),
\\[4pt]
&
T_{p,z_i}f_i^{-}(z)=f_i^{-}(z)(pz_i^2)^{-s+1}
(t^{2n-2}a_1\cdots a_m),\quad
&T_{p,z_j}f_i^{-}(z)=f_i^{-}(z)\quad(j\ne i). 
\end{array}
\end{equation*}
If the parameters satisfy the 
balancing condition
$t^{2n-2}a_1\cdots a_m=1$, then we have 
\begin{equation*}
T_{p,z_i}f_i^{+}(z)=f_i^{+}(z)\,(pz_i^2)^{-s+1},\quad
T_{p,z_i}f_i^{-}(z)=f_i^{-}(z)(pz_i^2)^{-s+1}, 
\end{equation*}
so that 
$f_i^{+}(z)$ and $f_i^{-}(z)$ have the same quasi-periodicity. 
\begin{lem}
Under the balancing condition 
$t^{2n-2}a_1\cdots a_m=1$ $(m=2s+2)$, 
for each $\varphi\in \cH^{(p)}_{s-1,n-1}$,  
\begin{equation}\label{eq:defpsi}
\begin{split}
\psi(z)=\sum_{i=1}^{n}(f_i^{+}(z)+f_i^{-}(z))\varphi(z_{\,\widehat{i}}), 
\quad z_{\,\widehat{i}}=(z_1,\ldots,z_{i-1},z_{i+1},\ldots,z_{n}), 
\end{split}
\end{equation}
belongs to $\cH_{s-1,n}^{(p)}$.
\end{lem}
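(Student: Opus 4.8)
The plan is to show that $\psi(z)$ as defined in \eqref{eq:defpsi} satisfies the three defining properties of $\cH_{s-1,n}^{(p)}$: holomorphy on $\bTC^n$, $W_n$-invariance, and the quasi-periodicity $T_{p,z_i}\psi(z)=\psi(z)(pz_i^2)^{-s+1}$ for each $i$. The quasi-periodicity is the easy part and essentially done already: each $\varphi(z_{\widehat{i}})$ depends only on the remaining variables, so $T_{p,z_j}$ acts trivially on it for $j\ne i$; combined with the displayed quasi-periodicities of $f_i^{+}(z),f_i^{-}(z)$ under the balancing condition (namely $T_{p,z_i}f_i^{\pm}(z)=f_i^{\pm}(z)(pz_i^2)^{-s+1}$ and $T_{p,z_j}f_i^{\pm}(z)=f_i^{\pm}(z)$ for $j\ne i$) and the degree-$(s-1)$ quasi-periodicity of $\varphi$ in each of its $n-1$ variables, one checks termwise that $T_{p,z_j}$ multiplies the whole sum by $(pz_j^2)^{-s+1}$.

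The $W_n$-invariance requires a short argument. Permutation invariance under $\frS_n$: the sum in \eqref{eq:defpsi} is manifestly symmetric since it runs over all $i$, each $\varphi(z_{\widehat{i}})$ is $\frS_{n-1}$-invariant in the remaining variables (as $\varphi\in\cH^{(p)}_{s-1,n-1}$ is $W_{n-1}$-invariant), and $f_i^{+}(z)+f_i^{-}(z)$ depends symmetrically on $z_{\widehat{i}}$ and is the same function of the distinguished variable $z_i$ for every $i$. Inversion invariance $z_i\mapsto z_i^{-1}$: here one uses $f_i^{-}(z)=f_i^{+}(z^{-1})$, so the $i$-th term $f_i^{+}(z)+f_i^{-}(z)$ is invariant under $z_i\mapsto z_i^{-1}$; for $j\ne i$ the factors $\theta(tz_iz_j^{\pm1};p)/\theta(z_iz_j^{\pm1};p)$ in $f_i^{\pm}$ are invariant under $z_j\mapsto z_j^{-1}$ because of the double-sign product, and $\varphi(z_{\widehat{i}})$ is invariant under each such inversion by $W_{n-1}$-invariance. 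Hence each term, and therefore $\psi$, is $W_n$-invariant.

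The main obstacle is holomorphy: a priori each $f_i^{+}(z)$ has poles coming from the denominators $\theta(z_i^2;p)$ and $\theta(z_iz_j^{\pm1};p)$, i.e.\ along $z_i^2\in p^{\bZ}$ and $z_i=p^k z_j^{\pm1}$, and similarly $f_i^{-}(z)$ has poles along $z_i^{-2}\in p^{\bZ}$ and $z_i^{-1}=p^k z_j^{\pm1}$. So $\psi(z)$ is a priori only meromorphic, and one must show all these singularities cancel in the sum. The plan is a standard residue/cancellation argument: at a generic point of the divisor $z_i=z_j$ (the case $z_i=z_j^{-1}$ being equivalent by the already-established inversion symmetry in $z_j$, and the divisor $z_i^2\in p^{\bZ}$ being handled by quasi-periodicity reduction to $z_i=\pm1$, where $e(z_i,\cdot)$-type factors or a separate check applies), only the two terms indexed by $i$ and $j$ can contribute a pole. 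One computes the residue of $f_i^{+}(z)\varphi(z_{\widehat{i}})+f_j^{+}(z)\varphi(z_{\widehat{j}})$ (together with the $f^{-}$ companions) along $z_i=z_j$: on this divisor $z_{\widehat{i}}$ and $z_{\widehat{j}}$ become the same point, $\varphi$ factors out, and the residues of $f_i^{+}$ and $f_j^{+}$ are negatives of each other because the factor $\theta(tz_iz_j;p)\theta(tz_i/z_j;p)$ in the numerator of $f_i^{+}$ and $\theta(tz_jz_i;p)\theta(tz_j/z_i;p)$ in that of $f_j^{+}$, divided by the respective remaining factors, match up after using $\theta(z_i/z_j;p)=-z_j/z_i\,\theta(z_j/z_i;p)$; a direct comparison of the two residues shows the sum is pole-free. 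One must also check there is no pole along the diagonal from interaction with the $\varphi$-factor, but $\varphi$ is holomorphic so it contributes nothing. Finally, since $\psi$ is meromorphic, quasi-periodic, $W_n$-invariant, and now holomorphic, it lies in $\cH_{s-1,n}^{(p)}$.
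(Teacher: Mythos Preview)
Your argument is correct. The quasi-periodicity check is termwise as you say (note a small slip: it is $T_{p,z_i}$, not $T_{p,z_j}$ for $j\ne i$, that acts trivially on $\varphi(z_{\widehat{i}})$; you clearly have the right picture since you then invoke the quasi-periodicity of $\varphi$ in its $n-1$ variables). The $W_n$-invariance argument is fine. For holomorphy your residue analysis works: on $z_i=z_j$ the two relevant summands are exchanged by the transposition, so a simple pole along that divisor would force an odd residue and hence vanishes; the cases $z_i=z_j^{-1}$ and $z_i=\pm1$ reduce to this via the sign changes you already established, together with quasi-periodicity for the $p$-shifted divisors.

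The paper proves holomorphy by the same mechanism but packaged more efficiently: it observes that $\Delta_C(z;p)\psi(z)$ is holomorphic (the type-$C_n$ Weyl denominator $\Delta_C(z;p)=\prod_i z_i\theta(z_i^{-2};p)\prod_{i<j}z_i\theta(z_i^{-1}z_j^{\pm1};p)$ has simple zeros precisely on the pole divisors of each $f_i^{\pm}$, so it clears the poles term by term), and since $\psi$ is $W_n$-invariant, $\Delta_C\psi$ is $W_n$-anti-invariant, hence vanishes on the reflection divisors and is divisible by $\Delta_C$; therefore $\psi=(\Delta_C\psi)/\Delta_C$ is holomorphic. This avoids the case-by-case residue matching and makes it transparent that only $W_n$-invariance and the simplicity of the poles are being used. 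Your explicit residue computation has the virtue of being self-contained and not requiring the divisibility lemma for anti-invariant theta functions, but the Weyl-denominator formulation is shorter and scales without extra work to all the divisors at once.
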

From the definition \eqref{eq:defpsi}, it directly follows that 
$\psi(z)$ is a 
$W_n$-invariant meromorphic function with quasi-periodicity 
of degree $s-1$; $\psi(z)$ is in fact holomorphic on $(\bC^\ast)^n$, 
since $\psi(z)$ is $W_n$-invariant and 
$\Delta_{C}(z;p)\psi(z)$ is holomorphic, where 
\begin{equation*}
\Delta_{C}(z;p)=
\prod_{i=1}^nz_i\theta(z_i^{-2};p)
\prod_{1\le i<j\le n}z_i\theta(z_i^{-1}z_j^{\pm1};p)
\end{equation*}
denotes the elliptic version of the Weyl denominator of type $C_n$.   
\par\medskip
In view of this lemma, we define the $\bC$-linear mapping 
$\nabla_{\mathrm{sym}}^{\Psi}:\ \cH^{(p)}_{s-1,n-1}\to \cH^{(p)}_{s-1,n}$ 
by 
\begin{equation*}
(\nabla_{\mathrm{sym}}^{\Psi}\varphi)(z)
=
\sum_{i=1}^{n}(f_i^{+}(z)+f_i^{-}(z))\varphi(z_{\,\widehat{i}})
\qquad(\varphi\in\cH_{s-1,n-1}^{(p)}).  
\end{equation*}
Since  $\nabla^{\Psi}_{\mathrm{sym}}\varphi$ is rewritten as 
\begin{equation*}
(\nabla_{\mathrm{sym}}^{\Psi}\varphi)(z)
=
\sum_{i=1}^{n}\left(1-b_i^{\Psi}(z)T_{q,z_i}\right)
\left(f_i^{-}(z)\varphi(z_{\,\widehat{i}})\right),
\end{equation*}
it 
can be regarded as 
{\em symmetrization} of the coboudary operator for the 
$q$-difference 
de Rham cohomology \cite{A1990,A1991}. 
The cokernel 
\begin{equation*}
H_{\mathrm{sym}}^{\Psi}
=\mathrm{Coker}
(\nabla_{\mathrm{sym}}^{\Psi}:\,\cH^{(p)}_{s-1,n-1}\to \cH^{(p)}_{s-1,n})=
\cH^{(p)}_{s-1,n}/\nabla_{\mathrm{sym}}^{\Psi}\cH^{(p)}_{s-1,n-1}
\end{equation*}
plays the role of the 
``symmetrized $n$th 
$q$-difference de Rham cohomology group'' 
associated with $\Psi(z)$. 
In this context the elements of $\cH_{s-1,n}^{(p)}$ will be called 
{\em $q$-cocyles}, or simply {\em cocycles}.
When 
$\psi_1,\psi_2\in\cH^{(p)}_{s-1,n}$, we denote by 
\begin{equation*}
\psi_1(z)\equiv_{\Psi} \psi_2(z)
\quad\mbox{or}\quad 
\Psi(z)\psi_1(z)\equiv \Psi(z)\psi_2(z)
\end{equation*}
the congruence 
{\em modulo} $\nabla_{\mathrm{sym}}^{\Psi}\cH^{(p)}_{s-1,n-1}$, i.e. 
$\psi_1(z)-\psi_2(z)\in\nabla_{\mathrm{sym}}^{\Psi}\cH^{(p)}_{s-1,n-1}.$  
If this is the case, 
it turns out that $\la\psi_1\ra_{\Psi}=\la\psi_2\ra_{\Psi}$, 
namely, 
\begin{equation*}
\int_{\bT^n}\Psi(z)\psi_1(z)\omega_n(z)=
\int_{\bT^n}\Psi(z)\psi_2(z)\omega_n(z)
\end{equation*}
by the Cauchy theorem, 
provided that $|a_k|<1$ ($k=1,\ldots,,m$) and $|t|<1$. 
In fact, we have 
\begin{lem}
\label{lem:nabla=0}
Suppose that $|a_k|<1$ $(k=1,\ldots,,m)$ and $|t|<1$. 
If $\psi(z)\equiv_{\Psi} 0$ for $\psi\in \cH^{(p)}_{s-1,n}$, then we have  
$\la\psi\ra_{\Psi}=0$, 
namely, 
\begin{equation*}
\int_{\bT^n}\Psi(z)\psi(z)\omega_n(z)=0.
\end{equation*}
\end{lem}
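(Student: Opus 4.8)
The plan is to show that each basic "coboundary" element $(\nabla_{\mathrm{sym}}^{\Psi}\varphi)(z)$ integrates to zero against $\Psi(z)\omega_n(z)$ over $\bT^n$, since $\nabla_{\mathrm{sym}}^{\Psi}\cH^{(p)}_{s-1,n-1}$ is spanned by such elements. Using the rewriting
\begin{equation*}
(\nabla_{\mathrm{sym}}^{\Psi}\varphi)(z)
=
\sum_{i=1}^{n}\left(1-b_i^{\Psi}(z)T_{q,z_i}\right)
\left(f_i^{-}(z)\varphi(z_{\,\widehat{i}})\right),
\end{equation*}
together with $b_i^{\Psi}(z)=T_{q,z_i}\Psi(z)/\Psi(z)$, it suffices to prove that for each $i$,
\begin{equation*}
\int_{\bT^n}\Psi(z)\,(1-b_i^{\Psi}(z)T_{q,z_i})\big(f_i^{-}(z)\varphi(z_{\,\widehat{i}})\big)\,\omega_n(z)=0.
\end{equation*}
By $W_n$-invariance of $\Psi$ it is enough to do this for $i=n$; the integral then equals $\int_{\bT^n}\big(\Psi(z)-T_{q,z_n}(\Psi(z)f_n^{-}(z)\varphi(z_{\,\widehat{n}}))\big)$ — more precisely, the integrand is $\Psi(z)f_n^{-}(z)\varphi(z_{\,\widehat{n}})-T_{q,z_n}\Psi(z)\cdot T_{q,z_n}(f_n^{-}(z)\varphi(z_{\,\widehat{n}}))$, and the key point is that $\Psi(z)g_n(z)$ and its $q$-shift $T_{q,z_n}(\Psi(z)g_n(z))$ have the same integral over the $z_n$-circle, where $g_n(z)=f_n^{-}(z)\varphi(z_{\,\widehat{n}})$.

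First I would fix all variables $z_1,\ldots,z_{n-1}$ on $\bT^{n-1}$ and reduce to the one-dimensional statement: if $F(\zeta)$ is meromorphic in $\zeta\in\bC^\ast$, holomorphic on the annulus $|q|\le|\zeta|\le 1$, then $\int_{|\zeta|=1}F(\zeta)\,d\zeta/\zeta=\int_{|\zeta|=1}F(q\zeta)\,d\zeta/\zeta$; indeed the right-hand side is $\int_{|\zeta|=|q|^{-1}}F(\zeta)\,d\zeta/\zeta$ after substituting $\zeta\mapsto q\zeta$, wait — rather $\int_{|\zeta|=1}F(q\zeta)d\zeta/\zeta=\int_{|\zeta'|=|q|}F(\zeta')d\zeta'/\zeta'$, and by Cauchy's theorem these two contour integrals agree provided $F(\zeta)d\zeta/\zeta$ is holomorphic in the region between the circles $|\zeta|=1$ and $|\zeta|=|q|$. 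Applying this with $F(\zeta)=\Psi(z_1,\ldots,z_{n-1},\zeta)g_n(z_1,\ldots,z_{n-1},\zeta)$ gives the desired identity, hence $\la\nabla_{\mathrm{sym}}^{\Psi}\varphi\ra_{\Psi}=0$, and therefore $\la\psi\ra_{\Psi}=0$ whenever $\psi\equiv_{\Psi}0$.

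The main obstacle is the verification that $F(\zeta)=\Psi(z',\zeta)g_n(z',\zeta)\,\zeta^{-1}$ (with $z'=(z_1,\ldots,z_{n-1})$ on $\bT^{n-1}$) is genuinely holomorphic on the closed annulus $|q|\le|\zeta|\le1$, so that Cauchy's theorem legitimately applies. Under the hypotheses $|a_k|<1$ and $|t|<1$, the factors $\Gamma(a_k\zeta^{\pm1};p,q)$, $\Gamma(tz_i^{\pm1}\zeta^{\pm1};p,q)$ appearing in $\Psi$ contribute possible zeros/poles only at $|\zeta|$ outside a neighborhood of $|\zeta|=1$; however one must track carefully the factor $1/\Gamma(\zeta^{\pm2};p,q)$ (which has zeros, not poles, near $|\zeta|=1$) and the explicit theta-quotient form of $f_n^{-}(z)=f_n^{+}(z^{-1})$, namely
\begin{equation*}
f_n^{-}(z)=\frac{\prod_{k=1}^{m}\theta(a_k z_n^{-1};p)}{z_n^{-s}\theta(z_n^{-2};p)}
\prod_{1\le j\le n-1}\frac{\theta(t z_n^{-1}z_j^{\pm1};p)}{\theta(z_n^{-1}z_j^{\pm1};p)},
\end{equation*}
whose denominator $\theta(z_n^{-2};p)$ and $\theta(z_n^{-1}z_j^{\pm1};p)$ could in principle vanish on $\bT^n$. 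The saving feature is that these potential singularities of $f_n^{-}$ are exactly cancelled by the zeros of $\Psi$ coming from $1/\Gamma(z_n^{\pm2};p,q)$ and $1/\Gamma(z_n^{\pm1}z_j^{\pm1};p,q)$: since $\Gamma(u;p,q)$ has a pole at $u=1$ contributed jointly by $p$ and $q$, and $1/\Gamma(u;p,q)$ has zeros there, the product $\Psi f_n^{-}$ is regular in a neighborhood of $\bT^n$ — and, under the balancing condition $t^{2n-2}a_1\cdots a_m=1$ used to match the quasi-periodicities of $f_n^{\pm}$, regular on the full annulus. This pole/zero bookkeeping, which is implicit in the construction of $\Phi$ and $\Psi$ in Section \ref{section:3}, is the step that requires care; once it is in place the Cauchy-theorem argument is routine and the lemma follows.
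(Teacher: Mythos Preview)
Your proposal is correct and follows essentially the same route as the paper: rewrite $\Psi(z)(\nabla^\Psi_{\mathrm{sym}}\varphi)(z)$ as $\sum_i(1-T_{q,z_i})\big(\Psi(z) f_i^-(z)\varphi(z_{\widehat i})\big)$, verify that $\Psi f_i^-\varphi(z_{\widehat i})$ is holomorphic in a neighborhood of the region $|q|\le|z_i|\le1$, $|z_j|=1$ $(j\ne i)$ via the pole/zero cancellations you describe, and then shift the $z_i$-contour by Cauchy's theorem. One small clarification: the balancing condition $t^{2n-2}a_1\cdots a_m=1$ is needed so that $\nabla^\Psi_{\mathrm{sym}}\varphi$ lands in $\cH^{(p)}_{s-1,n}$, but it plays no role in the annulus holomorphicity---that follows from $|a_k|<1$ and $|t|<1$ alone, exactly as the paper checks.
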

\begin{proof}{Proof}  
For $\psi\in \cH^{(p)}_{s-1,n}$, 
$\psi(z)\equiv_{\Psi} 0$ 
is equivalent to 
\begin{equation*}
\exists \varphi\in\cH^{(p)}_{s-1,n-1}:\quad
\Psi(z)\psi(z)=
\sum_{i=1}^{n}\left(1-T_{q,z_i}\right)
\left(\Psi(z)f_i^{-}(z)\varphi(z_{\,\widehat{i}})\right). 
\end{equation*}
If $|a_k|<1$ $(k=1,\ldots,,m)$ and $|t|<1$, 
one can verify that 
$\Psi(z)f_i^{-}(z)\varphi(z_{\,\widehat{i}})$ is holomorphic in a neighborhood of the compact set 
\begin{equation}
\label{eq:Tqi}
|q|\le |z_i| \le 1,\quad |z_j|=1\quad(1\le j\le n;\ j\ne i).
\end{equation}
In fact, since $f_i^{-}(z)$ is explicitly written as 
\begin{equation*}
f_i^{-}(z)=f_i^{+}(z^{-1})
=\frac{\prod_{k=1}^{m}\theta(a_kz_i^{-1};p)
}{z_i^{-1}\theta(z_i^{-2};p)}\prod_{\substack{1\le j\le n\\[1pt] j\ne i}}
\frac{\theta(tz_i^{-1}z_j^{\pm1};p)}{\theta(z_i^{-1}z_j^{\pm1};p)}, 
\end{equation*}
in the product $\Psi(z)f_i^{-}(z)$, 
all possible poles of each of the two functions $\Psi(z)$ and  
$f_i^{-}(z)$ 
\begin{equation*}
\begin{split}
&p^{\mu} z_i^{-2}=1, \quad p^{\mu}z_iz_j^{\pm1}, 
\quad
p^{\mu}a_k z_i^{-1},\quad
p^{\mu} tz_i^{-1}z_j^{\pm1}=1
\\
&\quad(1\le j\le n,\ j\ne i;\ k=1,\ldots,m; \ \mu
\in\mathbb{N}
)
\end{split}
\end{equation*}
relevant to 
the
region \eqref{eq:Tqi}
are eliminated 
by zeros of the other.  
Hence, by the Cauchy theorem, we have
\begin{equation*}
\begin{split}
&\int_{\bT^n}T_{q,z_i}
\left(\Psi(z)f_i^{-}(z)\varphi(z_{\,\widehat{i}})\right)
\omega_n(z)\\
&=
\int_{\bT^{n-1}}\bigg(
\frac{1}{2\pi\sqrt{-1}}
\int_{|z_i|=1}
T_{q,z_i}
\left(\Psi(z)f_i^{-}(z)\right)
\frac{dz_i}{z_i}
\bigg)\varphi(z_{\,\widehat{i}})\omega_{n-1}(z)
\\
&=
\int_{\bT^{n-1}}\bigg(
\frac{1}{2\pi\sqrt{-1}}
\int_{|z_i|=|q|}
\Psi(z)f_i^{-}(z)
\frac{dz_i}{z_i}
\bigg)
\varphi(z_{\,\widehat{i}})
\omega_{n-1}(z)
\\
&=
\int_{\bT^{n-1}}\bigg(
\frac{1}{2\pi\sqrt{-1}}
\int_{|z_i|=1}
\Psi(z)f_i^{-}(z)
\frac{dz_i}{z_i}
\bigg)
\varphi(z_{\,\widehat{i}})
\omega_{n-1}(z)
\\
&=\int_{\bT^n}
\Psi(z)f_i^{-}(z)\varphi(z_{\,\widehat{i}})
\omega_n(z)
,
\end{split}
\end{equation*}
so that 
\begin{equation*}
\int_{\bT^n}\left(1-T_{q,z_i}\right)
\left(\Psi(z)f_i^{-}(z)\varphi(z_{\,\widehat{i}})\right)\omega_n(z)=0\quad (i=1,\ldots,n).
\end{equation*}
This implies that 
\begin{equation*}
\int_{\bT^n}\Psi(z)\psi(z)\omega_n(z)=
\sum_{i=1}^{n}\int_{\bT^n}\left(1-T_{q,z_i}\right)
\left(\Psi(z)f_i^{-}(z)\varphi(z_{\,\widehat{i}})\right)\omega_n(z)=0.
\end{equation*}
This completes the proof.
\end{proof}
\subsection{Reduction of cocycles}
Choosing $s$ parameters $a_1,\ldots,a_s$ from 
$a=(a_1,\ldots,a_m)$, 
we consider the interpolation basis 
\begin{equation*}
E_{\mu}(a_{\pr{1,\ldots,s}};z;p)
=
E_{\mu}(a_1,\ldots,a_s;z;p)\qquad(\mu\in Z_{s,n})
\end{equation*}
for $\cH_{s-1,n}^{(p)}$ 
with respect to the parameters 
$a_{\pr{1,\ldots,s}}=(a_1,\ldots,a_s)$. 
In this subsection we use the notations 
$\theta(z)=\theta(z;p)$ and  
$E_{\mu}(a_{\pr{1,\ldots,s}};z)=E_{\mu}(a_{\pr{1,\ldots,s}};z;p)$, 
omitting the base $p$.  
\par\medskip
For each 
$\lambda\in Z_{s,n-1}$,  
we take the interpolation function 
$\varphi_{\lambda}=
E_{\lambda}(a_{\pr{1,\ldots,s}};\cdot)\in\cH_{s-1,n-1}^{(p)}$
and set 
$\psi_{\lambda}=
\nabla_{\mathrm{sym}}^{\Psi}\varphi_{\lambda}\in\cH^{(p)}_{s-1,n}$: 
\begin{equation}\label{eq:psilambda}
\quad
\psi_{\lambda}(z)=\sum_{i=1}^{n}(f_i^{+}(z)+f_i^{-}(z))
E_{\lambda}(a_{\pr{1,\ldots,s}};z_{\,\widehat{i}})\in\cH^{(p)}_{s-1,n}. 
\end{equation}
\begin{lem}\label{lem:psiexp}
For each 
$\lambda\in Z_{s,n-1}$, $\psi_{\lambda}(z)$ defined by 
\eqref{eq:psilambda} is expressed 
in terms of the interpolation basis of $\cH_{s-1,n}^{(p)}$
as 
\begin{equation*}
\psi_{\lambda}(z)=\sum_{k=1}^{s}C_{\lambda,k}
E_{\lambda+\ep_k}(a_{\pr{1,\ldots,s}};z),
\end{equation*}
where 
\begin{equation*}
\begin{split}
C_{\lambda,k}
&=
\frac{\prod_{l=1}^{m}\theta(t^{\lambda_k}a_ka_l)}
{(t^{\lambda_k}a_k)^{s}\theta(t^{2\lambda_k}a_k^2)}
\prod_{l=1}^{s}
\frac{\theta(t^{\lambda_k+\lambda_l}a_ka_l)
\theta(t^{\lambda_k+1}a_k/a_l)}
{\theta(t^{\lambda_k}a_ka_l)
\theta(t^{\lambda_k-\lambda_l+1}a_k/a_l)}\,
\\
&=\frac{1}
{(t^{\lambda_k}a_k)^{s}}
\prod_{l=1}^{s}
\frac{\theta(t^{\lambda_k+1}a_k/a_l)}
{\theta(t^{\lambda_k-\lambda_l+1}a_k/a_l)}
\prod_{\substack{1\le l\le s\\[1pt] l\ne k}}
\theta(t^{\lambda_k+\lambda_l}a_ka_l)
\prod_{l=s+1}^{m}
\theta(t^{\lambda_k}a_ka_l). 
\end{split}
\end{equation*}
\end{lem}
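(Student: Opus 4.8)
The plan is to evaluate $\psi_\lambda(z)=(\nabla_{\mathrm{sym}}^\Psi\varphi_\lambda)(z)$ at the reference points $(a_{\{1,\ldots,s\}})_{t,\nu}$ for $\nu\in Z_{s,n}$, since by the defining interpolation property the coefficient of $E_{\lambda+\ep_k}$ in the expansion of $\psi_\lambda$ is just $\psi_\lambda\big((a_{\{1,\ldots,s\}})_{t,\lambda+\ep_k}\big)$, and more generally $\psi_\lambda\big((a_{\{1,\ldots,s\}})_{t,\nu}\big)$ is the coefficient of $E_\nu$. So the first step is to show that $\psi_\lambda$ vanishes at $(a_{\{1,\ldots,s\}})_{t,\nu}$ unless $\nu=\lambda+\ep_k$ for some $k$ with $\lambda_k\ge 0$ (automatic), i.e. unless $\nu\ge\lambda$ coordinatewise with $|\nu|=|\lambda|+1$.

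First I would look at the structure of the functions $f_i^{\pm}(z)$ appearing in $\psi_\lambda(z)=\sum_{i=1}^n(f_i^+(z)+f_i^-(z))E_\lambda(a_{\{1,\ldots,s\}};z_{\widehat i})$. The key observations are: (i) $f_i^+(z)$ has $z_i^s\theta(z_i^2;p)$ in its denominator, which forces most of the substituted coordinates $z_i=t^{\nu_j-1}a_j$ (those with $\nu_j\ge 2$, or the "interior" ones $t^c a_j$ with $c\ge 1$) to make $f_i^+$ regular, and in fact one checks that at such a point $E_\lambda(a_{\{1,\ldots,s\}};z_{\widehat i})$ together with the remaining theta-factors of $f_i^\pm$ conspire to kill all contributions except when $z_i$ is a "leading" coordinate $a_k$ of the $k$-th block; (ii) by $W_n$-invariance of $\psi_\lambda$ and $W_{n-1}$-invariance of $\varphi_\lambda$, one may assume $i=n$ and that the first $n-1$ coordinates are $(a_{\{1,\ldots,s\}})_{t,\lambda}$, reducing the whole computation to a single-variable substitution in $z_n$. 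Concretely I would set $z_{\widehat n}=(a_{\{1,\ldots,s\}})_{t,\lambda}$, so that $E_\lambda(a_{\{1,\ldots,s\}};z_{\widehat n})=1$, and then evaluate $f_n^+(z)+f_n^-(z)$ with $z_n$ ranging over the candidate values; the recursion \eqref{eq:recursion} and the $n=1$ formula \eqref{eq:E(c;u);n=1} handle the interpolation bookkeeping for the lower-variable pieces when $i<n$.

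The computational heart is the second displayed formula for $C_{\lambda,k}$: after establishing that only $\nu=\lambda+\ep_k$ survives, I would compute $\psi_\lambda\big((a_{\{1,\ldots,s\}})_{t,\lambda+\ep_k}\big)$ by substituting $z_n=t^{\lambda_k}a_k$ into $f_n^+(z)+f_n^-(z)$ with $z_{\widehat n}=(a_{\{1,\ldots,s\}})_{t,\lambda}$. At this point $f_n^-=f_n^+(z^{-1})$ involves $\theta(a_k a_l;\ldots)$-type factors in its numerator evaluated at $z_n^{-1}=t^{-\lambda_k}a_k^{-1}$, and using $\theta(u^{-1};p)=-u^{-1}\theta(u;p)$ and the quasi-periodicity $\theta(pu;p)=-u^{-1}\theta(u;p)$ one can show the $f_n^-$ contribution either vanishes (a $\theta(1;p)=0$ factor, coming from $\theta(z_i^{-2};p)$ type terms hitting the block structure) or combines cleanly; I expect $f_n^+$ to give the main term $\dfrac{\prod_{l=1}^m\theta(t^{\lambda_k}a_ka_l)}{(t^{\lambda_k}a_k)^s\theta(t^{2\lambda_k}a_k^2)}$ and the cross terms $\prod_{j\ne n}\dfrac{\theta(t z_n z_j^{\pm1};p)}{\theta(z_n z_j^{\pm1};p)}$ to produce, after substituting the explicit block coordinates $z_j=t^c a_l$ and telescoping the resulting $t$-shifted factorials, precisely the quotient $\prod_{l=1}^s\dfrac{\theta(t^{\lambda_k+\lambda_l}a_ka_l)\theta(t^{\lambda_k+1}a_k/a_l)}{\theta(t^{\lambda_k}a_ka_l)\theta(t^{\lambda_k-\lambda_l+1}a_k/a_l)}$. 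The passage between the two displayed forms of $C_{\lambda,k}$ is then the elementary cancellation of $\theta(t^{\lambda_k}a_k a_l)$ for $l=1,\ldots,s$ against the corresponding factors in the numerator $\prod_{l=1}^m$, leaving $\prod_{l=s+1}^m\theta(t^{\lambda_k}a_ka_l)$, together with $\theta(t^{2\lambda_k}a_k^2)=\theta(t^{\lambda_k}a_k\cdot t^{\lambda_k}a_k)$ cancelling the $l=k$ term of $\prod_{\{l\ne k\}}\theta(t^{\lambda_k+\lambda_l}a_ka_l)$ — which is why that product has $l\ne k$ in the second form.

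The main obstacle I anticipate is the telescoping of the $\prod_{j\ne n}\theta(tz_nz_j^{\pm1};p)/\theta(z_nz_j^{\pm1};p)$ factor once the block coordinates are plugged in: for each block $l$ of size $\lambda_l$ one gets a ratio of $t$-shifted theta factorials $e(\cdot,\cdot;p)_{t,\lambda_l}$-type products that must collapse to a single ratio of two thetas, and getting the shifts exactly right (in particular distinguishing the block $l=k$ containing the evaluation coordinate, where an off-by-one in the factorial length occurs) requires care. A secondary subtlety is verifying the vanishing $\psi_\lambda\big((a_{\{1,\ldots,s\}})_{t,\nu}\big)=0$ for all other $\nu$; here I would argue that for $\nu$ not of the form $\lambda+\ep_k$, every term $f_i^\pm(z)E_\lambda(a_{\{1,\ldots,s\}};z_{\widehat i})$ individually vanishes at $(a_{\{1,\ldots,s\}})_{t,\nu}$ because either $E_\lambda$ hits a zero (when $\nu_{\widehat i}\not\ge\lambda$ is impossible since $|\nu_{\widehat i}|=|\lambda|$, so actually one uses $E_\lambda((a)_{t,\nu-\ep_j})=\delta_{\lambda,\nu-\ep_j}$ when $z_i$ is the last coordinate of block $j$) or $f_i^\pm$ hits a zero from the $\theta(tz_iz_j^{\pm1};p)$ numerator when $z_i,z_j$ lie in the same block at distance one; assembling these into a clean case analysis indexed by which block the removed coordinate $z_i$ belongs to is the bookkeeping I'd need to get right. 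Once both pieces are in place, the lemma follows immediately from the uniqueness of the interpolation expansion.
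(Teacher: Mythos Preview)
Your approach is essentially the paper's: expand $\psi_\lambda$ in the interpolation basis by evaluating at the reference points $(a_{\{1,\dots,s\}})_{t,\mu}$, show that only $\mu=\lambda+\ep_k$ survives, and compute the surviving value. Two clarifications will make your execution match the paper's cleaner organization. First, there is no need for the $W_n$-invariance reduction to ``assume $i=n$'': at the standard reference point $(a)_{t,\mu}$ one simply analyzes each term of the sum over $i$ directly, and the zeros of $f_i^{\pm}$ do all the work. Second, the $f_i^-$ contribution does not ``combine cleanly'' but in fact vanishes identically at every reference point: if $z_i=t^{c}a_j$ with $c\ge 1$ then $z_{i-1}=t^{c-1}a_j$ and the numerator factor $\theta(tz_{i-1}/z_i;p)=\theta(1;p)=0$ kills $f_i^-$, while if $c=0$ then $z_i=a_j$ and the numerator factor $\theta(a_j z_i^{-1};p)=\theta(1;p)=0$ does. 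For $f_i^+$ the analogous factor $\theta(tz_i/z_{i+1};p)$ vanishes unless $z_i$ is the \emph{last} coordinate of its block; in that case $z_{\widehat i}=(a)_{t,\mu-\ep_j}$ exactly (no reordering needed), and the interpolation property $E_\lambda((a)_{t,\mu-\ep_j})=\delta_{\lambda,\mu-\ep_j}$ forces $\mu=\lambda+\ep_k$ and $j=k$. The telescoping you anticipate for $\prod_{j\ne i}\theta(tz_iz_j^{\pm1})/\theta(z_iz_j^{\pm1})$ at $z_i=t^{\lambda_k}a_k$ then goes through blockwise exactly as you describe, yielding $C_{\lambda,k}$.
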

\begin{proof}{Proof}
Noting that 
\begin{equation*}
\psi_{\lambda}(z)=\sum_{\mu\in Z_{s,n}}
\psi_{\lambda}((a_{\pr{1,\ldots,s}})_{t,\mu})E_{\mu}(a_{\pr{1,\ldots,s}};z), 
\end{equation*}
we determine the values 
of $\psi_{\lambda}(z)$ at $z=(a_{\pr{1,\ldots,s}})_{t,\mu}$ 
by means of \eqref{eq:psilambda}. 
\par
Fixing $\mu\in Z_{s,n}$, suppose that 
$z_i=t^{\nu}a_k$ ($k=1,\ldots,s$; $0\le \nu<\mu_k$) 
under the substitution $z=(a_{\pr{1,\ldots,s}})_{t,\mu}$. 
If $\nu>0$, we have 
$f_i^{-}((a_{\pr{1,\ldots,s}})_{t,\lambda})=0$ 
since $i\ge 2$ and 
$f_i^{-}(z)$ has the factor $\theta(tz_{i-1}/z_i)$. 
If $\nu=0$, then $z_i=a_k$, and 
$f_i^{-}((a_{\pr{1,\ldots,s}})_{t,\mu})=0$ since 
$f_i^{-}(z)$ has the factor $\theta(a_k/z_i)$. 
This means that $f^{-}_i((a_{\pr{1,\ldots,s}})_{t,\mu})=0$ for any $\mu$. 
\par
As to 
$f_i^{+}(z)$, 
if $\nu<\mu_k-1$, then $f_i^{+}((a_{\pr{1,\ldots,s}})_{t,\mu})=0$
since $i<n$ and $f_i^{+}(z)$ have the factor 
$\theta(tz_i/z_{i+1})$.  
Assume that $\nu=\mu_{k}-1$. 
Since 
$z_{\,\widehat{i}}=(a_{\pr{1,\ldots,s}})_{t,\mu-\ep_k}$ in this case, we have 
\begin{equation*}
\begin{split}
f_{i}^{+}(z)E_{\lambda}(a_{\pr{1,\ldots,s}};z)
\Big|_{z=(a_{\pr{1,\ldots,s}})_{t,\mu}}
&=f_{i}^{+}((a_{\pr{1,\ldots,s}})_{t,\mu})
E_{\lambda}(a_{\pr{1,\ldots,s}};(a_{\pr{1,\ldots,s}})_{t,\mu-\ep_k})
\\ 
&=\delta_{\lambda,\mu-\ep_k}f_{i}^{+}((a_{\pr{1,\ldots,s}})_{t,\mu}).
\end{split}
\end{equation*}
Hence, 
$\psi_{\lambda}((a_{\pr{1,\ldots,s}}))_{t,\mu})$ is nontrivial only when 
$\mu=\lambda+\ep_k$ for some $k=1,\ldots,m$, 
and 
\begin{equation*}
\begin{split}
\psi_{\lambda}((a_{\pr{1,\ldots,s}})_{t,\mu})&=f_{i}^{+}((a_{\pr{1,\ldots,s}})_{t,\lambda+\ep_k})
\\
&=
\frac{\prod_{l=1}^{m}\theta(t^{\lambda_k}a_ka_l)}
{(t^{\lambda_k}a_k)^{s}\theta(t^{2\lambda_k}a_k^2)}
\prod_{l=1}^{s}
\prod_{\nu=0}^{\lambda_l-1}
\frac{\theta(t^{\lambda_k+1+\nu}a_ka_l)
\theta(t^{\lambda_k+1-\nu}a_k/a_l)}
{\theta(t^{\lambda_k+\nu}a_ka_l)
\theta(t^{\lambda_k-\nu}a_k/a_l)}
\\
&=
\frac{\prod_{l=1}^{m}\theta(t^{\lambda_k}a_ka_l)}
{(t^{\lambda_k}a_k)^{s}\theta(t^{2\lambda_k}a_k^2)}
\prod_{l=1}^{s}
\frac{\theta(t^{\lambda_k+\lambda_l}a_ka_l)
\theta(t^{\lambda_k+1}a_k/a_l)}
{\theta(t^{\lambda_k}a_ka_l)
\theta(t^{\lambda_k-\lambda_l+1}a_k/a_l)}, 
\end{split}
\end{equation*}
which gives the explicit expression of $C_{\lambda,k}$. 
\end{proof}
\begin{rem}\rm 
In the expansion of 
$\psi_{\lambda}(z)$ $(\lambda\in Z_{s,n-1})$
in terms of the interpolation basis of 
$\cH^{(p)}_{s,n}$, 
its leading term is given by 
$E_{\lambda+\ep_1}(a_{\pr{1,\ldots,s}};z)$
with respect to the lexicographic ordering of $Z_{s,n}$. 
Hence, for generic values of $a_1,\ldots,a_s$, 
the functions 
$\psi_{\lambda}(z)$ $(\lambda\in Z_{s,n-1})$ 
are linearly independent over $\bC$.  
This means that the $\bC$-linear mapping 
$\nabla_{\mathrm{sym}}^{\Psi}:\ \cH^{(p)}_{s-1,n-1}\to \cH^{(p)}_{s-1,n}$ 
is {\em injective}.  Hence
\begin{equation*}\ts
\dim_{\bC}H_{\mathrm{sym}}^{\Psi}
=\dim_{\bC}\cH^{(p)}_{s-1,n}/\nabla_{\mathrm{sym}}^{\Psi}\cH^{(p)}_{s-1,n-1}
=\binom{n+s-1}{s-1}-
\binom{n+s-2}{s-1}=\binom{n+s-2}{s-2}.  
\end{equation*}
In particular, we have 
$\dim_{\bC}H_{\mathrm{sym}}^{\Psi}=1$ if $s=2$ ($r=1$), and 
$\dim_{\bC}H_{\mathrm{sym}}^{\Psi}=n+1$ if $s=3$ ($r=2$).
\hfill $\square$
\end{rem}
\par
From Lemma \ref{lem:psiexp}, 
for each $\lambda\in Z_{s,n-1}$ 
we have the congruence 
\begin{equation*}
\sum_{k=1}^{s}
\frac{1}
{(t^{\lambda_k}a_k)^{s}}
\prod_{l=1}^{s}
\frac{\theta(t^{\lambda_k+1}a_k/a_l)}
{\theta(t^{\lambda_k-\lambda_l+1}a_k/a_l)}
\prod_{\substack{1\le l\le s\\[1pt] l\ne k}}
\theta(t^{\lambda_k+\lambda_l}a_ka_l)
\prod_{l=s+1}^{m}
\theta(t^{\lambda_k}a_ka_l)
E_{\lambda+\ep_k}(a_{\pr{1,\ldots,s}};z)
\equiv_{\Psi}0
\end{equation*}
modulo $\nabla_{\mathrm{sym}}^\Psi\cH_{s-1,n-1}^{\Psi}$, 
or equivalently, 
\begin{equation}
\label{eq:E-cong}
\begin{split}
\sum_{k=1}^{s}
\prod_{l=1}^{s}
\frac{[t^{\lambda_k+1}a_k/a_l]}
{[t^{\lambda_k-\lambda_l+1}a_k/a_l]}
\prod_{\substack{1\le l\le s\\[1pt] l\ne k}}
[t^{\lambda_k+\lambda_l}a_ka_l]
\prod_{l=s+1}^{m}
[t^{\lambda_k}a_ka_l]
E_{\lambda+\ep_k}(a_{\pr{1,\ldots,s}};z)
\equiv_{\Psi}0
\end{split}
\end{equation}
in terms of the odd theta function
$[u]=u^{-\hf}\theta(u)$. 
For each $\mu\in\bN^{s}$ we set 
\begin{equation*}
K_{\mu}=
\prod_{i,j=1}^{s}
[ta_i/a_j]_{\mu_i}
\prod_{i=1}^{s}
\prod_{j=s+1}^{m}
[a_ia_j]_{\mu_i}
\prod_{1\le i<j\le s}
[a_ia_j]_{\mu_i+\mu_j},
\end{equation*}
so that 
\begin{equation*}
K_{\lambda+\ep_k}
=K_{\lambda}
\prod_{j=1}^{s}[t^{\lambda_k+1}a_k/a_j]
\prod_{\substack{1\le l\le s\\[1pt] l\ne k}}
[t^{\lambda_k+\lambda_j}a_ka_j]
\prod_{j=s+1}^{m}
[t^{\lambda_k}a_ka_j]
\quad(1\le k\le s).
\end{equation*}
Then, formula \eqref{eq:E-cong} implies that 
the renormalized interpolation functions 
\begin{equation*}
\widetilde{E}_{\mu}(a_{\pr{1,\ldots,s}};z)
=K_{\mu}E_{\mu}(a_{\pr{1,\ldots,s}};z)
\quad (\mu\in Z_{s,n})
\end{equation*}
satisfy 
\begin{equation*}
\sum_{k=1}^{s}
\prod_{l=1}^{s}
\frac{1}
{[t^{\lambda_k-\lambda_l+1}a_k/a_l]}
\widetilde{E}_{\lambda+\ep_k}(a_{\pr{1,\ldots,s}};z)
\equiv_{\Psi}0.
\end{equation*}
Therefore we obtain 
\begin{equation*}
\widetilde{E}_{\lambda+\ep_{s}}(a_{\pr{1,\ldots,s}};z)
\equiv_{\Psi}
\sum_{k=1}^{s-1}
\frac{[t^{\lambda_s-\lambda_{k}+1}a_s/a_{k}]}
{[t^{\lambda_s-\lambda_{k}-1}a_s/a_{k}]}
\prod_{\substack{1\le l\le s-1\\[1pt] l\ne k}}
\frac{[t^{\lambda_{s}-\lambda_l+1}a_{s}/a_l]}
{[t^{\lambda_k-\lambda_l+1}a_k/a_l]}
\widetilde{E}_{\lambda+\ep_k}(a_{\pr{1,\ldots,s}};z), 
\end{equation*}
and by putting $\lambda+\ep_{s}=\mu$, 
\begin{equation}\label{eq:Etilde}
\begin{split}
\widetilde{E}_{\mu}(a_{\pr{1,\ldots,s}};z)
&\equiv_{\Psi}
\sum_{k=1}^{s-1}
\frac{[t^{\mu_s-\mu_k}a_s/a_{k}]}
{[t^{\mu_s-\mu_k-2}a_s/a_{k}]}
\prod_{\substack{1\le l\le s-1\\[1pt] l\ne k}}
\frac{[t^{\mu_s-\mu_l}a_s/a_l]}
{[t^{\mu_k-\mu_l+1}a_k/a_l]}
\widetilde{E}_{\mu+\ep_k-\ep_s}(a_{\pr{1,\ldots,s}};z). 
\end{split}
\end{equation}
Using this formula, we can lower the last component of $\mu$ step by step.
\begin{thm}
\label{thm:E cong RE}
Suppose that $t^{2n-2}a_1\cdots a_m=1$ $(m=2s+2)$.  
For each $\mu\in Z_{s,n}$ and for each integer $l$ satisfying $0\le l\le\mu_{s}$, 
we have 
\begin{equation}\label{eq:Rmunu01}
\begin{split}
&\widetilde{E}_{\mu}(a_{\pr{1,\ldots,s}};z)
\equiv_{\Psi}
\sum_{\substack{\nu\in Z_{s,n}\\ 
\nu_{s}=\mu_{s}-l,\,\mu'\le\nu'}}
R_{\mu,\nu} \widetilde{E}_{\nu}(a_{\pr{1,\ldots,s}};z),
\end{split}
\end{equation}
where
\begin{equation}\label{eq:Rmunu02}
\begin{split}
&R_{\mu,\nu}=
\prod_{j=1}^{s-1}
\frac{
[t^{\mu_s-\mu_j}a_s/a_j]
}{
[t^{\nu_s-\nu_j}a_s/a_j]
}
\prod_{j=1}^{s}
\frac{[t^{\nu_s-\mu_j}a_s/a_j]_{\mu_s-\nu_s}}
{\prod_{i=1}^{s-1}[t^{\mu_i-\mu_j+1}a_i/a_j]_{\nu_i-\mu_i}}. 
\end{split}
\end{equation}
\end{thm}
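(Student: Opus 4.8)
The plan is to prove \eqref{eq:Rmunu01}--\eqref{eq:Rmunu02} by induction on the integer $l$, using the single-step reduction \eqref{eq:Etilde} as the base case and composing reductions to pass from $l$ to $l+1$. First I would treat $l=0$, where the claimed sum has the single term $\nu=\mu$ and the coefficient $R_{\mu,\mu}$ should collapse to $1$: in \eqref{eq:Rmunu02} each ratio of the form $[t^{\mu_s-\mu_j}a_s/a_j]/[t^{\nu_s-\nu_j}a_s/a_j]$ becomes $1$ when $\nu=\mu$, the shifted factorial $[t^{\nu_s-\mu_j}a_s/a_j]_{\mu_s-\nu_s}$ is an empty product, and the denominator $\prod_i[t^{\mu_i-\mu_j+1}a_i/a_j]_{\nu_i-\mu_i}$ is likewise empty; so $R_{\mu,\mu}=1$, confirming the base case. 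For $l=1$, the statement must reduce exactly to \eqref{eq:Etilde}, so I would check that specializing $\nu_s=\mu_s-1$, $\nu'=\mu'+\ep_k$ (for each $k=1,\ldots,s-1$) in \eqref{eq:Rmunu02} reproduces the coefficient $\frac{[t^{\mu_s-\mu_k}a_s/a_k]}{[t^{\mu_s-\mu_k-2}a_s/a_k]}\prod_{l\ne k}\frac{[t^{\mu_s-\mu_l}a_s/a_l]}{[t^{\mu_k-\mu_l+1}a_k/a_l]}$ appearing in \eqref{eq:Etilde}; this is a direct but essential consistency check.

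For the inductive step I would take the expansion \eqref{eq:Rmunu01} with parameter $l$, apply the one-step formula \eqref{eq:Etilde} to each $\widetilde{E}_{\nu}$ occurring on the right (these all have $\nu_s=\mu_s-l>0$, so \eqref{eq:Etilde} applies), and collect the result as a sum over $\rho\in Z_{s,n}$ with $\rho_s=\mu_s-l-1$ and $\mu'\le\rho'$. The coefficient of $\widetilde{E}_\rho$ is then $\sum_{\nu}R_{\mu,\nu}\,S_{\nu,\rho}$, where $S_{\nu,\rho}$ is the single-step coefficient from \eqref{eq:Etilde}, and one must show this equals $R_{\mu,\rho}$. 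This is the heart of the proof: it amounts to a theta-function summation identity. I would write both $R_{\mu,\nu}$ and $S_{\nu,\rho}$ explicitly in terms of $[u]$-factorials, factor out everything independent of the summation index $\nu'$ (which ranges over multi-indices with $\mu'\le\nu'\le\rho'$ and $|\nu'|$ fixed by $\nu_s=\mu_s-l$), and recognize the remaining sum as an instance of a known multiple theta-function (partial-fraction / Bailey-type) identity — plausibly the same partial fraction decomposition $\prod_j\frac{[zc_j]}{[zt^{\mu_j}c_j]}=\sum_k\cdots$ used in the proof of Theorem \ref{thm: special value}, or an iterated version of it.

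An alternative, possibly cleaner route is to avoid the recursion on $l$ altogether and instead identify $R_{\mu,\nu}$ directly with a transition coefficient. Since $\widetilde{E}_\mu=K_\mu E_\mu$ and the congruences \eqref{eq:E-cong} express that, modulo $\nabla^\Psi_{\mathrm{sym}}\cH^{(p)}_{s-1,n-1}$, the span of the $\widetilde{E}_\mu$ with $\mu_s>0$ collapses onto the span of those with $\mu_s=0$, one expects $R_{\mu,\nu}$ (with $\nu_s=0$, $l=\mu_s$) to coincide, up to the $K$-normalization, with the change-of-parameter coefficient $C^{I;k,l}_{\mu,\nu}$ of Theorem \ref{thm:transC} / equation \eqref{eq:transCmunu}, realizing the passage from the basis attached to $\{a_1,\ldots,a_s\}$ to that attached to $\{a_1,\ldots,a_{s-1},a_{s+1}\}$ (say). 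If that match can be made, the general $l$ case follows by the same argument applied to the subset of parameters, and the cocycle relations \eqref{eq:E-cong} guarantee the two descriptions agree. I expect the main obstacle to be the bookkeeping in the inductive step: verifying that the weighted sum $\sum_\nu R_{\mu,\nu}S_{\nu,\rho}$ telescopes to $R_{\mu,\rho}$ requires carefully matching the shifted-factorial ranges (the $[\,\cdot\,]_{\mu_s-\nu_s}$ and $[\,\cdot\,]_{\nu_i-\mu_i}$ blocks) and invoking the correct theta summation formula; the $p$-quasi-periodicity and $W_n$-invariance are automatic from the setup, so the content is entirely this elliptic combinatorial identity.
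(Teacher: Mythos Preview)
Your main inductive plan is correct and matches the paper's proof essentially line for line: the paper verifies the $l=1$ case against \eqref{eq:Etilde}, then proves the recursion $R_{\mu,\nu}=\sum_{\lambda}R_{\mu,\lambda}R_{\lambda,\nu}$ (the sum running over $\lambda=\nu-\ep_k+\ep_s$, i.e.\ a single $s-1$--term sum, not an iterated one) by exactly the partial fraction decomposition you guessed from the proof of Theorem~\ref{thm: special value}, specialized at $z=t^{\nu_s}a_s$. Your alternative route via the transition coefficients $C^{I;k,l}$ is not needed and does not quite fit --- those coefficients govern a change of interpolation parameters rather than the reduction modulo $\nabla_{\mathrm{sym}}^{\Psi}$, so stick with the direct induction.
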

\begin{rem}
The coefficient $R_{\mu,\nu}$ is also written as 
\begin{equation*}
R_{\mu,\nu}
=
\frac{[t]_{\mu_s-\nu_s}}
{
\prod_{i,j=1}^{s-1}
[t^{\mu_i-\mu_j+1}a_i/a_j]_{\nu_i-\mu_i}
}
\prod_{i=1}^{s-1}
\frac{[t^{\mu_s-\mu_i}a_s/a_i]}
{[t^{\nu_s-\nu_i}a_s/a_i]}
\frac{[t^{\nu_s-\mu_i}a_s/a_i]_{\mu_s-\nu_s}}
{[t^{\mu_s-\nu_i}a_s/a_i]_{\nu_i-\mu_i}}.
\end{equation*}
\end{rem}
\begin{proof}{Proof} We use induction on $l$ to prove that the coefficients $R_{\mu,\nu}$ defined by \eqref{eq:Rmunu02} 
satisfy \eqref{eq:Rmunu01}.
When $l=1$, 
\begin{equation*}
R_{\mu,\mu+\ep_k-\ep_s}
=
\frac{
[t^{\mu_s-\mu_k}a_s/a_k]
}{
[t^{\mu_s-\mu_k-2}a_s/a_k]
}
\prod_{\substack{1\le j\le s-1\\[1pt] j\ne k}}
\frac{[t^{\mu_s-\mu_j}a_s/a_j]}
{[t^{\mu_k-\mu_j+1}a_k/a_j]}
\qquad (1\le k\le s-1).
\end{equation*}
recover the coefficients in \eqref{eq:Etilde}.
Under  \eqref{eq:Rmunu01} for $l-1$ as the assumption of induction, we have 
\begin{equation*}
\begin{split}
\widetilde{E}_{\mu}(a_{\pr{1,\ldots,s}};z)
&\equiv_{\Psi}
\sum_{\substack{\lambda\in Z_{s,n}\\ \lambda_s=\mu_s-l+1}}
R_{\mu,\lambda}\widetilde{E}_{\lambda}(a_{\pr{1,\ldots,s}};z)
\\
&\equiv_{\Psi}
\sum_{\substack{\lambda\in Z_{s,n}\\ \lambda_s=\mu_s-l+1}}
\ \sum_{\substack{\nu\in Z_{s,n}\\  \nu_s=\mu_s-l}}
R_{\mu,\lambda}
R_{\lambda,\nu}
\widetilde{E}_{\nu}(a_{\pr{1,\ldots,s}};z).
\end{split}
\end{equation*}
Hence it suffices to show 
\begin{equation}
\label{eq:RR=R01}
\begin{split}
R_{\mu,\nu}=
\sum_{\substack{\lambda\in Z_{s,n}\\
\lambda_s=\mu_s-l+1,\,
\mu'\le\lambda'\le\nu'}}
R_{\mu,\lambda}R_{\lambda,\nu}.
\end{split}
\end{equation}
for each $\nu\in Z_{s,n}$ with $\nu_s=\mu_s-l$, $\mu'\le\nu'$. 
Under the condition
$\mu'\le\lambda'\le\nu'$, by \eqref{eq:Rmunu02} we have 
\begin{equation}
\label{eq:RR=R02}
\begin{split}
R_{\mu,\lambda}R_{\lambda,\nu}
&=
R_{\mu,\nu}
\prod_{j=1}^{s}
\frac{
[t^{\lambda_s-\mu_j}a_s/a_j]_{\mu_s-\lambda_s}
[t^{\nu_s-\lambda_j}a_s/a_j]_{\lambda_s-\nu_s}
}{[t^{\nu_s-\mu_j}a_s/a_j]_{\mu_s-\nu_s}}
\\
&\quad\cdot
\prod_{j=1}^{s}
\prod_{i=1}^{s-1}
\frac{[t^{\mu_i-\mu_j+1}a_i/a_j]_{\nu_i-\mu_i}}
{[t^{\mu_i-\mu_j+1}a_i/a_j]_{\lambda_i-\mu_i}
[t^{\lambda_i-\lambda_j+1}a_i/a_j]_{\nu_i-\lambda_i}}
\\
&=
R_{\mu,\nu}
\prod_{j=1}^{s}
\frac{
[t^{\nu_s-\lambda_j}a_s/a_j]_{\lambda_s-\nu_s}
}{
[t^{\nu_s-\mu_j}a_s/a_j]_{\lambda_s-\nu_s}}
\prod_{j=1}^{s}
\prod_{i=1}^{s-1}
\frac{[t^{\lambda_i-\mu_j+1}a_i/a_j]_{\nu_i-\lambda_i}}
{[t^{\lambda_i-\lambda_j+1}a_i/a_j]_{\nu_i-\lambda_i}}. 
\end{split}
\end{equation}
In the right-hand side of \eqref{eq:RR=R01}, each $\lambda$ is expressed as 
$\lambda=\nu-\ep_k+\ep_s$ for some $k\in\pr{1,\ldots,s-1}$ such that $\mu_k<\nu_k$.
Then 
\eqref{eq:RR=R02} is rewritten as 
\begin{equation}
\label{eq:RR=R03}
\begin{split}
R_{\mu,\lambda}R_{\lambda,\nu}&=
R_{\mu,\nu}
\prod_{j=1}^{s}
\frac{
[t^{\nu_s-\lambda_j}a_s/a_j]
}{
[t^{\nu_s-\mu_j}a_s/a_j]}
\frac{
[t^{\lambda_k-\mu_j+1}a_k/a_j]}
{[t^{\lambda_k-\lambda_j+1}a_k/a_j]}
\\
&=
R_{\mu,\nu}
\frac{[t^{\nu_k-\mu_k}]}
{[t^{\mu_s-\nu_s}]}
\frac{[t^{\mu_s-\nu_k}a_s/a_k]}
{[t^{\nu_s-\nu_k}a_s/a_k]}
\prod_{\substack{1\le j\le s-1\\[1pt] j\ne k}}
\frac{[t^{\nu_k-\mu_j}a_k/a_j]}
{[t^{\nu_k-\nu_j}a_k/a_j]}
\prod_{j=1}^{s-1}
\frac{[t^{\nu_s-\nu_j}a_s/a_j]}
{[t^{\nu_s-\mu_j}a_s/a_j]}. 
\end{split}
\end{equation}
Note that the partial fraction decomposition
\begin{equation*}
\prod_{j=1}^{s-1}
\frac{[z/t^{\mu_j}a_j]}
{[z/t^{\nu_j}a_j]}
=\sum_{k=1}^{s-1}
\frac{[t^{\mu_s-\nu_s}z/t^{\nu_k}a_k]
[t^{\nu_k-\mu_k}]}
{[z/t^{\nu_k}a_k][t^{\mu_s-\nu_s}]}
\prod_{\substack{1\le j\le s-1\\[1pt] j\ne k}}
\frac{[t^{\nu_k}a_k
/t^{\mu_j}a_j]}
{[t^{\nu_k}a_k/t^{\nu_j}a_j]}
\end{equation*}
implies 
\begin{equation}
\label{eq:pfd}
\begin{split}
\prod_{j=1}^{s-1}
\frac{[t^{\nu_s-\mu_j}a_s/a_j]}
{[t^{\nu_s-\nu_j}a_s/a_j]}
=
\sum_{k=1}^{s-1}
\frac{[t^{\mu_s-\nu_k}a_s/a_k][t^{\nu_k-\mu_k}]}
{[t^{\nu_s-\nu_k}a_s/a_k][t^{\mu_s-\nu_s}]}
\prod_{\substack{1\le j\le s-1\\[1pt] j\ne k}}
\frac{[t^{\nu_k-\mu_j}a_k/a_j]}
{[t^{\nu_k-\nu_j}a_k/a_j]} 
\end{split}
\end{equation}
as the special case $z=t^{\nu_s}a_s$. 
By \eqref{eq:RR=R03} and  \eqref{eq:pfd} the right-hand side of \eqref{eq:RR=R01} is computed as  
\begin{equation*}
\begin{split}
\sum_{\substack{\lambda\in Z_{s,n}\\ \lambda_s=\nu_s+1}}
R_{\mu,\lambda}R_{\lambda,\nu}
&=
R_{\mu,\nu}
\sum_{k=1}^{s-1}
\frac{[t^{\mu_s-\nu_k}a_s/a_k][t^{\nu_k-\mu_k}]}
{[t^{\nu_s-\nu_k}a_s/a_k][t^{\mu_s-\nu_s}]}
\prod_{\substack{1\le j\le s-1\\[1pt] j\ne k}}
\frac{[t^{\nu_k-\mu_j}a_k/a_j]}
{[t^{\nu_k-\nu_j}a_k/a_j]}
\prod_{j=1}^{s-1}
\frac{[t^{\nu_s-\nu_j}a_s/a_j]}
{[t^{\nu_s-\mu_j}a_s/a_j]}
\\
&=
R_{\mu,\nu},
\end{split}
\end{equation*}
which completes the proof of Theorem \ref{thm:E cong RE}. 
\end{proof}
\par\medskip
Theorem \ref{thm:E cong RE} implies 
that each 
$E_{\mu}(a_{\pr{1,\ldots,s}};z)$ 
can be reduced to a linear combination of 
interpolation functions on the $s$th face 
under the congruence in 
$H_{\mathrm{sym}}^{\Psi}
=\cH^{(p)}_{s-1,n}/
\nabla_{\mathrm{sym}}^{\Psi}\cH^{(p)}_{s-1,n-1}$. 
Namely, for each $\mu\in Z_{s,n}$ we have 
\begin{equation}\label{eq:Smunu}
\begin{split}
E_{\mu}(a_{\pr{1,\ldots,s}};z)\equiv_{\Psi}
\sum_{\substack{\nu\in Z_{s,n}\\ \nu_s=0,\,\mu'\le\nu'}}
S_{\mu,\nu}E_{\nu}(a_{\pr{1,\ldots,s}};z), 
\end{split}
\end{equation}
where the coefficients are given by 
\begin{equation}\label{eq:Smunu2}
\begin{split}
S_{\mu,\nu}
&=K_{\mu}^{-1}R_{\mu,\nu}K_{\nu}
\\
&=
\frac{[t]_{\mu_s-\nu_s}}
{
\prod_{i,j=1}^{s-1}
[t^{\mu_i-\mu_j+1}a_i/a_j]_{\nu_i-\mu_i}
}
\prod_{i=1}^{s-1}
\frac{[t^{\mu_s-\mu_i}a_s/a_i]}
{[t^{\nu_s-\nu_i}a_s/a_i]}
\frac{[t^{\nu_s-\mu_i}a_s/a_i]_{\mu_s-\nu_s}}
{[t^{\mu_s-\nu_i}a_s/a_i]_{\nu_i-\mu_i}}
\\
&\quad\cdot
\prod_{i,j=1}^{s}
\frac{[ta_i/a_j]_{\nu_i}}{[ta_i/a_j]_{\mu_i}}
\prod_{i=1}^{s}
\prod_{j=s+1}^{m}
\frac{[a_ia_j]_{\nu_i}}{[a_ia_j]_{\mu_i}}
\prod_{1\le i<j\le s}
\frac{[a_ia_j]_{\nu_i+\nu_j}}
{[a_ia_j]_{\mu_i+\mu_j}}. 
\end{split}
\end{equation}
Recall that 
\begin{equation*}
E_{(\nu',0)}(a_{\pr{1,\ldots,s}};z)
=E_{\nu'}(a_{\pr{1,\ldots,s-1}};z)
\frac{\prod_{i=1}^{n} e(z_i,a_s)}
{\prod_{k=1}^{s-1}e(a_k,a_s)_{\nu_k}}
\end{equation*}
by \eqref{eq:Emusthface},
and that 
$E_{\nu'}(a_{\pr{1,\ldots,s-1}};z)$ 
$(\nu'\in Z_{s-1,n})$ form a basis of 
$\cH^{(p)}_{s-2,n}$.  
The above argument implies that the composition 
\begin{equation*}
\cH^{(p)}_{s-2,n}\prod_{i=1}^{n}e(z_i,a_s)
\hookrightarrow
\cH^{(p)}_{s-1,n}
\twoheadrightarrow
H_{\mathrm{sym}}^{\Psi}=
\cH^{(p)}_{s-1,n}/\nabla^{\Psi}\cH^{(p)}_{s-1,n-1}
\end{equation*}
is surjective.  Since 
$\dim_{\bC}\cH^{(p)}_{s-2,n}
=\dim_{\bC}H_{\mathrm{sym}}^{\Psi}=
\binom{n+s-2}{s-2}$, 
we obtain a natural $\bC$-isomorphism 
\begin{equation*}
\cH^{(p)}_{s-2,n}\prod_{i=1}^{n}e(z_i;a_s)\isom
H_{\mathrm{sym}}^{\Psi}
=\cH^{(p)}_{s-1,n}/
\nabla_{\mathrm{sym}}^{\Psi}\cH^{(p)}_{s-1,n-1}. 
\end{equation*}
Hence we have 
\begin{thm}\label{thm:modnabla}
Under the assumption of Theorem 
\ref{thm:E cong RE}, the classes
\begin{equation*}
E_{\nu'}(a_{\pr{1,\ldots,s-1}};z)\prod_{i=1}^{n} e(z_i,a_s)
\quad(\nu'\in Z_{s-1,n})
\end{equation*}
modulo 
$\nabla_{\mathrm{sym}}^{\Psi}\cH^{(p)}_{s-1,n-1}$ 
form a $\bC$-basis of 
$H_{\mathrm{sym}}^{\Psi}=
\cH^{(p)}_{s-1,n}/
\nabla_{\mathrm{sym}}^{\Psi}\cH^{(p)}_{s-1,n-1}$. 
\hfill $\square$
\end{thm}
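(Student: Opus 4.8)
The plan is to derive Theorem~\ref{thm:modnabla} as a formal consequence of Theorem~\ref{thm:E cong RE}, the face-restriction formula \eqref{eq:Emusthface}, and a comparison of dimensions. First I would take $\mu\in Z_{s,n}$ arbitrary and apply Theorem~\ref{thm:E cong RE} in the extreme case $l=\mu_s$, obtaining the congruence
\begin{equation*}
E_{\mu}(a_{\pr{1,\ldots,s}};z)\equiv_{\Psi}\sum_{\substack{\nu\in Z_{s,n}\\ \nu_s=0,\ \mu'\le\nu'}}S_{\mu,\nu}\,E_{\nu}(a_{\pr{1,\ldots,s}};z),
\end{equation*}
with $S_{\mu,\nu}=K_\mu^{-1}R_{\mu,\nu}K_\nu$ as in \eqref{eq:Smunu2}. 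Since $\prmts{E_{\mu}(a_{\pr{1,\ldots,s}};z)}{\mu\in Z_{s,n}}$ is a $\bC$-basis of $\cH^{(p)}_{s-1,n}$, this already shows that the classes of the face functions $E_{(\nu',0)}(a_{\pr{1,\ldots,s}};z)$ $(\nu'\in Z_{s-1,n})$ span $H^{\Psi}_{\mathrm{sym}}$.

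Next I would rewrite each face function by \eqref{eq:Emusthface} as $E_{(\nu',0)}(a_{\pr{1,\ldots,s}};z)=\big(\prod_{k=1}^{s-1}e(a_k,a_s)_{\nu_k}\big)^{-1}E_{\nu'}(a_{\pr{1,\ldots,s-1}};z)\prod_{i=1}^n e(z_i,a_s)$. For generic $a$ the scalars $\prod_{k=1}^{s-1}e(a_k,a_s)_{\nu_k}$ are nonzero, so the span of the face classes equals the span of $\prmts{E_{\nu'}(a_{\pr{1,\ldots,s-1}};z)\prod_{i=1}^n e(z_i,a_s)}{\nu'\in Z_{s-1,n}}$. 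Because $\prmts{E_{\nu'}(a_{\pr{1,\ldots,s-1}};z)}{\nu'\in Z_{s-1,n}}$ is a $\bC$-basis of $\cH^{(p)}_{s-2,n}$, and because multiplication by $\prod_{i=1}^n e(z_i,a_s)$ raises the quasi-periodicity degree by exactly one (hence embeds $\cH^{(p)}_{s-2,n}$ into $\cH^{(p)}_{s-1,n}$ and preserves $W_n$-invariance, $e(u,v)$ being even in $u$), I obtain that the composite
\begin{equation*}
\cH^{(p)}_{s-2,n}\xrightarrow{\ \cdot\prod_{i=1}^n e(z_i,a_s)\ }\cH^{(p)}_{s-1,n}\twoheadrightarrow H^{\Psi}_{\mathrm{sym}}
\end{equation*}
is surjective.

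Finally I would close the argument by counting dimensions. From the discussion following Lemma~\ref{lem:psiexp}, for generic parameters $\nabla^{\Psi}_{\mathrm{sym}}$ is injective, so $\dim_\bC H^{\Psi}_{\mathrm{sym}}=\binom{n+s-1}{s-1}-\binom{n+s-2}{s-1}=\binom{n+s-2}{s-2}=\dim_\bC\cH^{(p)}_{s-2,n}$. A surjection between finite-dimensional vector spaces of equal dimension is an isomorphism, so the composite above is an isomorphism; in particular the images of the basis $\prmts{E_{\nu'}(a_{\pr{1,\ldots,s-1}};z)}{\nu'\in Z_{s-1,n}}$ of $\cH^{(p)}_{s-2,n}$, namely the classes $E_{\nu'}(a_{\pr{1,\ldots,s-1}};z)\prod_{i=1}^n e(z_i,a_s)$ modulo $\nabla^{\Psi}_{\mathrm{sym}}\cH^{(p)}_{s-1,n-1}$, are linearly independent and span $H^{\Psi}_{\mathrm{sym}}$, hence form a $\bC$-basis. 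The one substantive point in this chain---that the successive reductions lowering the last component of $\mu$ are mutually consistent---is precisely what Theorem~\ref{thm:E cong RE} (via the cocycle identity \eqref{eq:RR=R01}) already guarantees, so relative to that theorem the present statement is essentially a bookkeeping corollary; the only things left to check are the nonvanishing of the transition scalars for generic $a$ and the two dimension formulas, both of which are available from the preceding material.
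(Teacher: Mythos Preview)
Your proposal is correct and follows essentially the same approach as the paper: reduce every $E_\mu$ to the $s$th face via Theorem~\ref{thm:E cong RE} (the paper's \eqref{eq:Smunu}), rewrite the face functions via \eqref{eq:Emusthface}, conclude surjectivity of the composite $\cH^{(p)}_{s-2,n}\prod_i e(z_i,a_s)\to H^{\Psi}_{\mathrm{sym}}$, and finish by the dimension count using injectivity of $\nabla^{\Psi}_{\mathrm{sym}}$ from the remark after Lemma~\ref{lem:psiexp}. The paper's argument is exactly this, only stated more tersely in the paragraph preceding the theorem.
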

\subsection{Base change in $H_{\mathrm{sym}}^{\Psi}$}
In what follows we set $s=r+1$, so that $m=2r+4$ 
and 
$H_{\mathrm{sym}}^{\Psi}=
\cH^{(p)}_{r,n}/\nabla_{\mathrm{sym}}^{\Psi}\cH^{(p)}_{r,n-1}$. 
Let $K$ be a subset 
of the indexing set $\pr{1,\ldots,m}$ with 
$|K|=r+1$, and consider the 
the interpolation basis 
\begin{equation*}
\cH^{(p)}_{r,n}=\bigoplus_{\mu\in Z_{K,n}}
\bC\,
E_{\mu}(a_K;z)
\end{equation*}
with respect to the parameters 
$a_K=(a_k)_{k\in K}\in(\bC^\ast)^{K}$, 
where 
\begin{equation*}
Z_{K,n}=\prm{\mu=(\mu_k)_{k\in K}\in \bN^K}
{\ts |\mu|=\sum_{k\in I}\mu_k=n}.
\end{equation*}
Then, 
for each subset $J\subset K$ with $|J|=r$,  
$K=J\cup\pr{k}$, 
the classes 
\begin{equation}\label{eq:EmuaJ}
E_{\mu}(a_J;z)\prod_{i=1}^{n}e(z_i,e_k)\quad (\mu\in Z_{J,n})
\end{equation}
modulo $\nabla_{\mathrm{sym}}^{\Psi}\cH^{(p)}_{r,n-1}$
form a $\bC$-basis of 
$H_{\mathrm{sym}}^{\Psi}$. 
\par\medskip
We now fix a subset $I$ of $\pr{1,\ldots,m}$ with $|I|=r-1$, 
and choose two indices $k,l\in\pr{1,\ldots,m}\backslash I$.  
In this setting, we consider the transition 
between the 
two bases of the form \eqref{eq:EmuaJ} 
of $H_{\mathrm{sym}}^{\Psi}$ specified by 
$I\cup\pr{k}$ and $I\cup\pr{l}$.  
We define the transition coefficients 
$B_{\mu,\nu}^{I;k,l}$  
through the relation 
\begin{equation}\label{eq:transB}
E_{\mu}(a_{I\cup\pr{k}};z)\prod_{i=1}^{n}e(a_l;z_i)
\equiv_{\Psi}
\sum_{\substack{
\nu\in Z_{I\cup\pr{l},n}\\
\mu_{\widehat{k}}\le\nu_{\,\widehat{l}}
}}
B_{\mu,\nu}^{I;k,l}
E_{\nu}(a_{I\cup\pr{l}};z)\prod_{i=1}^{n}e(a_k;z_i)
\quad
(\mu\in Z_{I\cup\pr{k}}),
\end{equation}
where 
$\mu_{\widehat{k}}=(\mu_i)_{i\in I}$ 
and $\nu_{\widehat{l}}=(\nu_i)_{i\in I}$.  
These coefficients are directly computed 
by 
\eqref{eq:Smunu} and \eqref{eq:Smunu2} 
as follows: 
\begin{equation*}
\begin{split}
B_{\mu,\nu}^{I;k,l}
&=
(-1)^{\mu_k-\nu_l}
[a_k/a_l]_{\mu_k-\nu_l}
\prod_{i\in I}
\frac
{[t^{\mu_i}a_i/t^{\mu_k}a_k]}
{[t^{\nu_i}a_i/t^{\mu_k}a_k]}
\frac
{[a_i/t^{\nu_l}a_l]_{\mu_i}}
{[a_i/t^{\mu_k}a_k]_{\nu_i}}
\frac
{[a_ia_l]_{\nu_i+\nu_l}}
{[a_ia_k]_{\mu_i+\mu_k}}
\prod_{\substack{j\notin I\\[1pt] j\ne k,l}}
\frac{[a_la_j]_{\nu_l}}{[a_ka_j]_{\mu_k}}
\\
&\quad\cdot
\prod_{i,j\in I}
\frac
{[t^{\mu_i+1}a_i/a_j]_{\nu_i-\mu_i}}
{[t^{\mu_i-\mu_j+1}a_i/a_j]_{\nu_i-\mu_i}}
\prod_{\substack{i,j\in I\\[1pt] i<j}}
\frac
{[a_ia_j]_{\nu_i+\nu_j}}
{[a_ia_j]_{\mu_i+\mu_j}}
\prod_{i\in I}
\prod_{\substack{j\notin I\\[1pt] j\ne k,l}}
\frac{[a_ia_j]_{\nu_i}}{[a_ia_j]_{\mu_i}}. 
\end{split}
\end{equation*}
We remark 
that the matrix 
$B^{I;k,l}=\big(B_{\mu,\nu}^{I;k,l}\big)_{\mu,\nu}$ 
is upper triangular with respect to the partial 
ordering of $\mu_{\widehat{k}}, \nu_{\widehat{l}}\in\bN^I$. 
The diagonal components 
(with $\mu_{\widehat{k}}=\nu_{\widehat{l}}$\,,
$\mu_k=\nu_l$) 
are given by 
\begin{equation*}
\begin{split}
B_{\mu,\nu}^{I;k,l}
&=
\prod_{i\in I}
\frac
{[t^{-\mu_k}a_i/a_l]_{\mu_i}}
{[t^{-\mu_k}a_i/a_k]_{\mu_i}}
\frac
{[t^{\mu_k}a_ia_l]_{\mu_i}}
{[t^{\mu_k}a_ia_k]_{\mu_i}}
\prod_{\substack{1\le j\le m\\[1pt] j\ne k,l}}
\frac
{[a_la_j]_{\mu_k}}
{[a_ka_j]_{\mu_k}}
\\
&=
\left(\frac{a_k}{a_l}\right)^{(r+1)\mu_k}
\prod_{i\in I}
\frac
{e(a_i,t^{\mu_k}a_l)_{\mu_i}}
{e(a_i,t^{\mu_k}a_k)_{\mu_i}}
\prod_{\substack{1\le j\le m\\[1pt] j\ne k,l}}
\frac
{\theta(a_la_j)_{\mu_k}}
{\theta(a_ka_j)_{\mu_k}}.
\end{split}
\end{equation*}
Hence the determinant of $B^{I;k,l}$ is computed as 
\begin{equation}\label{eq:Bdet}
\begin{split}
\det B^{I;k,l}
&=
\left(\frac{a_k}{a_l}\right)^{(r+1)\binom{n+r-1}{r}}
\prod_{0\le u+v\le n}
\prod_{i\in I}
\left(
\frac
{e(a_i,t^{u}a_l)_{v}}
{e(a_i,t^{u}a_k)_{v}}
\right)^{\binom{n-u-v+r-3}{r-3}}
\\
&\quad\cdot
\prod_{u=0}^{n-1}\, 
\prod_{\substack{1\le j\le m\\[1pt] j\ne k,l}}
\left(
\frac
{\theta(a_la_j)_{u}}
{\theta(a_ka_j)_{u}}
\right)^{\binom{n-u+r-2}{r-2}}
\\
&=
\left(\frac{a_k}{a_l}\right)^{(r+1)\binom{n+r-1}{r}}
\prod_{0\le u+v<n}
\prod_{i\in I}
\left(
\frac
{e(t^{u}a_l,t^{v}a_i)}
{e(t^{u}a_k,t^{v}a_i)}
\right)^{\binom{n-u-v+r-3}{r-2}}
\\
&\quad\cdot
\prod_{u=0}^{n-1}\,
\prod_{\substack{1\le j\le m\\[1pt] j\ne k,l}}
\left(
\frac
{\theta(t^{u}a_la_j)}
{\theta(t^{u}a_ka_j)}
\right)^{\binom{n-u+r-2}{r-1}}. 
\end{split}
\end{equation}
\section{\boldmath System of $q$-difference equations}
\label{section:4}
\subsection{System of $q$-difference equations 
associated with a basis of $\cH^{(p)}_{r-1,n}$}
In this section, 
in view of the parameter dependence of 
$\Phi(z)=\Phi(z;a)$ 
we investigate $q$-difference equations 
to be satisfied by the integrals
\begin{equation*}
\la f, g\ra_{\Phi}=
\int_{\bTR^n}
f(z)g(z)\Phi(z;a)\,\omega_n(z)\qquad
(f\in\cH_{r-1,n}^{(p)},\ 
g\in\cH_{r-1,n}^{(q)})
\end{equation*}
with respect to $a$.   
Here we assume that $m=2r+4$ and that 
the parameters 
$a=(a_1,\ldots,a_m)\in\bTC^m$ 
satisfy the conditions $|a_k|<1$ ($k=1,\ldots,m$) and $|t|<1$ as in Lemma \ref{lem:nabla=0}. 
Fixing a $\bC$-basis $\prmts{f_{\mu}(z)}{\mu\in Z_{r,n}}$ 
of 
$ \cH_{r-1,n}^{(p)}$ and 
a holomorphic function $g(z)$ in $\cH^{(q)}_{r-1,n}$, 
we consider the integrals 
\begin{equation}\label{eq:defImu}
\cI_{\mu}=
\la f_{\mu},g\ra_{\Phi}=
\int_{\bTR^n}f_{\mu}(z)g(z)\Phi(z)\,\omega_n(z)
\quad(\mu\in Z_{r,n}). 
\end{equation}
As we will see below, 
under the balancing condition $t^{2n-2}a_1\cdots a_m=pq$, 
the column vector $\cI=(\cI_{\mu})_{\mu\in Z_{r,n}}$ 
satisfies a system of $q$-difference equations of the form 
\begin{equation}\label{eq:qDEI}
T_{q,a_k}T_{q,a_l}^{-1}\,\cI=\cA^{k,l}(a)\,\cI\qquad(1\le k<l\le m)
\end{equation}
of rank $\binom{n+r-1}{n}$, 
where the coefficient matrices 
$\cA^{k,l}(a)=(\cA_{\mu\nu}^{k,l}(a))_{\mu,\nu\in Z_{r,n}}$
are determined independently of the choice of $g(z)$.  
In this setting, 
$f_{\mu}(z)$ and $g(z)$ may depend meromorphically 
on $a\in (\mathbb{C}^*)^m$, 
while we assume that $g(z)$ satisfies the condition
\begin{equation*}
T_{q,a_k}T_{q,a_l}^{-1}g(z)=g(z)\qquad(1\le k<l\le m).  
\end{equation*}
We say that a meromorphic function 
$g(z)=g(z;a)$ 
on $(\mathbb{C}^*)^n\times(\mathbb{C}^*)^m$ {\em depends meromorphically on} $a$, 
if there exists a holomorphic function $h(a)$ on $(\mathbb{C}^*)^m$ such that 
$h(a)g(z;a)$ is holomorphic on $(\mathbb{C}^*)^n\times(\mathbb{C}^*)^m$.
\subsection{Derivation of $q$-difference equations}
We explain how one can derive the $q$-difference 
equations \eqref{eq:qDEI} in the case $l=m$. 
Shifting $a_m$ by $pq$, we have 
\begin{equation*}
\begin{split}
T_{pq,a_m}\Phi(z)&=\Phi(z;a_1,\ldots,a_{m-1},pqa_{m})
\\
&=\Phi(z)
\prod_{i=1}^{n}a_m^{-2}\theta(a_mz_i^{\pm};p)\,\theta(a_mz_i^{\pm1};q)
\\
&=\Phi(z)
\prod_{i=1}^{n}e(a_m,z_i;p)e(a_m,z_i;q),
\end{split}
\end{equation*}
and hence
\begin{equation*}
T_{pq,a_m}\big(f_{\mu}(z)g(z)\Phi(z)\big)
=
\widetilde{f}_{\mu}(z)
\prod_{i=1}^{n}e(a_m,z_i;p)\,
\widetilde{g}(z)\prod_{i=1}^{n}e(a_m,z_i;q)\,\Phi(z),
\end{equation*}
where 
$\widetilde{f}_{\mu}(z)=T_{pq,a_m}f_{\mu}(z)$ and 
$\widetilde{g}(z)=T_{pq,a_m}g(z)$.
Setting
\begin{equation}
\label{eq:deffmug}
\varphi_{\mu}(z)=\widetilde{f}_{\mu}(z)\prod_{i=1}^{n}e(a_m,z_i;p)\quad(\mu\in Z_{r,n}),
\quad 
\psi(z)=\widetilde{g}(z)\prod_{i=1}^{n}e(a_m,z_i;q), 
\end{equation}
we rewrite the formula above as 
\begin{equation*}
T_{pq,a_m}\big(f_{\mu}(z)g(z)\Phi(z)\big)=\varphi_{\mu}(z)\psi(z)\Phi(z).  
\end{equation*}
Note that
$\varphi_{\mu}(z)\in\cH_{r,n}^{(p)}$ $(\mu\in Z_{r,n})$ 
and 
$\psi(z)\in\cH_{r,n}^{(q)}$, 
since $f_{\mu}(z)\in\cH_{r-1,n}^{(p)}$ and $g(z)\in\cH_{r-1,n}^{(q)}$. 
Then we have 
\begin{equation*}
\begin{split}
T_{pq,a_m}\cI_{\mu}&=
\int_{\bTR^n}T_{pq,a_m}\big(f_{\mu}(z)g(z)\Phi(z))
\big)\omega_n(z)
\\
&=\int_{\bTR^n}
\varphi_{\mu}(z)\psi(z)\Phi(z)\,\omega_n(z)
\\
&=\la 
\varphi_{\mu},\psi\ra_{\Phi}
=\la \varphi_{\mu}\ra_{\Psi},
\end{split}
\end{equation*}
where $\Psi(z)=\psi(z)\Phi(z)$. 
We first prove that the integrals 
\begin{equation*}
\widetilde{\cI}_{\mu}=T_{pq,a_m}\cI_{\mu}
=\la \varphi_{\mu}\ra_{\Psi}\quad
(\mu\in Z_{r,n})
\end{equation*}
satisfy a system of $q$-difference equations of the form
\begin{equation*}
T_{q,a_k}T_{q,a_m}^{-1}
\widetilde{\cI}_{\mu}
=\sum_{\nu\in Z_{r,n}}\widetilde{\cA}_{\mu,\nu}^{k,m}(a)\widetilde{\cI}_{\nu}
\quad(\mu\in Z_{r,n})
\end{equation*}
for $k=1,\ldots,m-1$, 
under the balancing condition $t^{2n-2}a_1\cdots a_m=1$. 
Then, applying $T_{pq,a_m}^{-1}$ 
we see that 
$\cI_{\mu}$ $(\mu\in Z_{r,n})$ 
satisfy the system of $q$-difference equations 
\begin{equation*}
T_{q,a_k}T_{q,a_m}^{-1}
\cI_{\mu}
=\sum_{\nu\in Z_{r,n}}{\cA}_{\mu,\nu}^{k,m}(a)\cI_{\nu}
\quad(\mu\in Z_{r,n})
\end{equation*}
under the balancing condition $t^{2n-2}a_1\cdots a_m=pq$, 
where $\cA_{\mu,\nu}^{k,m}(a)
=T_{pq,a_m}^{-1}\widetilde{\cA}_{\mu,\nu}^{k,m}(a)$. 
\par
We now assume that $t^{2n-2}a_1\cdots a_m=1$.  Note that
\begin{equation*}
T_{q,a_k}\Phi(z)=a_{k}^n\prod_{i=1}^{n}e(a_k,z_i;p)\,\Phi(z)\quad
(k=1,\ldots,m).
\end{equation*}
Also, by \eqref{eq:deffmug} we have
\begin{equation*}
T_{q,a_k}T_{q,a_m}^{-1}\psi(z)=\psi(z)(qa_m^2)^{-n}
\qquad(k=1,\ldots,m-1). 
\end{equation*}
Hence obtain
\begin{equation*}
\begin{split}
T_{q,a_k}T_{q,a_m}^{-1}
\big(
\varphi_{\mu}(z)
\Psi(z)\big)&=
T_{q,a_k}T_{q,a_m}^{-1}
\big(
\widetilde{f}_{\mu}(z)\prod_{i=1}^{n}e(a_m,z_i;p)
\Psi(z)\big)
\\
&=
(a_ka_m)^{n}
T_{q,a_k}T_{q,a_m}^{-1}(\widetilde{f}_{\mu}(z))\prod_{i=1}^{n}e(a_k,z_i;p)
\Psi(z). 
\end{split}
\end{equation*}
Since $f_{\nu}(z)$ ($\nu\in Z_{r,n}$) form a $\bC$-basis 
of $\cH_{r-1,n}^{(p)}$, 
by Theorem \ref{thm:modnabla} 
the congruence classes of 
\begin{equation*}
\varphi_{\nu}(z)=
\widetilde{f}_{\nu}(z)\prod_{i=1}^{n}e(a_m,z_i;p)\quad(\nu\in Z_{r,n})
\end{equation*}
form a $\bC$-basis of 
$H_{\mathrm{sym}}^{\Psi}
=\cH_{r,n}^{(p)}/\nabla^{\Psi}_{\mathrm{sym}}\cH_{r,n-1}^{(p)}$. 
This implies that 
\begin{equation*}
(a_ka_m)^{n}\,T_{q,a_k}T_{q,a_m}^{-1}(\widetilde{f}_{\mu}(z))
\prod_{i=1}^{n}e(a_k,z_i;p)
\equiv_{\Psi}
\sum_{\nu\in Z_{r,n}} \widetilde{\cA}_{\mu,\nu}^{k,m}(a)\,\varphi_{\nu}(z)
\end{equation*}
for some 
$\widetilde{\cA}_{\mu,\nu}^{k,m}(a)$.  
Hence we have 
\begin{equation*}
\begin{split}
T_{q,a_k}T_{q,a_m}^{-1}\widetilde{\cI}_{\mu}
&=
T_{q,a_k}T_{q,a_m}^{-1}\la \varphi_{\mu}\ra_{\Psi}
=
\la 
T_{q,a_k}T_{q,a_m}^{-1}(\widetilde{f}_{\mu}(z))
\prod_{i=1}^{n}e(a_k,z_i;p)
\ra_{\Psi}
\\
&=
\sum_{\nu\in Z_{r,n}} \widetilde{\cA}_{\mu,\nu}^{k,m}(a) 
\la \varphi_{\nu}\ra_{\Psi}
=
\sum_{\nu\in Z_{r,n}} \widetilde{\cA}_{\mu,\nu}^{k,m}(a)
\widetilde{\cI}_{\nu}.  
\end{split}
\end{equation*}
\subsection{System of $q$-difference equations 
associated with an interpolation basis}
We now consider the case of the interpolation basis 
\begin{equation*}
f_{\mu}(z)=E_{\mu}(a_{\pr{1,\ldots,r}};z;p)\qquad(\mu\in Z_{r,n}), 
\end{equation*}
and investigate the system of $q$-difference equations
\eqref{eq:qDEI} to be satisfied by the integrals 
\begin{equation}\label{eq:cImuE}
\cI_{\mu}=\cI_{\mu}(a)=\int_{\bT^n} E_{\mu}(a_{\pr{1,\ldots,r}};z;p)g(z)\Phi(z)\omega_n(z)
\quad(\mu\in Z_{r.n})
\end{equation}
for a fixed $g(z)\in\cH^{(q)}_{r-1,n}$. 
We assume that $g(z)$ depends meromorphically on $a\in (\mathbb{C}^*)^m$
and satisfies the $q$-shift invariance 
$T_{q,a_k}T_{q,a_l}^{-1}g(z)=g(z)$ ($1\le k<l\le m$). 
\par\medskip
We suppose below that $a_1\cdots a_mt^{2n-2}=pq$, 
and regard $a_m=pq/a_1\cdots a_{m-1}t^{2n-2}$ 
as a function of $(a_1,\ldots,a_{m-1})$.  
Then the integral $\cI_{\mu}(a)$, regarded as 
a function of $(a_1,\ldots,a_{m-1})$, 
is meromorphic on the open subset 
\begin{equation*}
U_0=\big\{\ 
(a_1,\ldots,a_{m-1})\in(\mathbb{C}^\ast)^{m-1}
\big\vert\ 
|a_1|<1,\ldots,|a_{m-1}|<1, |a_1\cdots a_{m-1}|>\frac{|p||q|}{|t|^{2n-2}}  
\big\}
\end{equation*}
of $(\mathbb{C}^\ast)^{m-1}$; we need to assume   
$|p||q|<|t|^{2n-2}$ in order to ensure that $U_0$ 
is not empty.  
If we assume further that $|p|<|t|^{2n-2}$, the integrals 
$T_{q,a_k}T_{q,a_m}^{-1}\cI_{\mu}(a)$ $(k=1,\ldots,m-1)$ as well as $\cI_{\mu}(a)$
are meromorphic on the nonempty open subset 
\begin{equation*}
V_0=\big\{\ 
(a_1,\ldots,a_{m-1})\in(\mathbb{C}^\ast)^{m-1}
\big\vert\ 
|a_1|<1,\ldots,|a_{m-1}|<1, |a_1\cdots a_{m-1}|>\frac{|p|}{|t|^{2n-2}}  
\big\} 
\end{equation*}
of $U_0$.
\begin{thm}\label{thm:detcA}
Suppose that 
$|p|<|t|^{2n-2}$. 
Under the balancing condition $t^{2n-2}a_1\cdots a_m=pq$, 
the integrals $\cI_{\mu}$ of \eqref{eq:cImuE} 
satisfy a system of $q$-difference equations of the form
\begin{equation}
\label{eq:TTI=SAI}
T_{q,a_k}T_{q,a_m}^{-1}\cI_{\mu}=\sum_{\nu\in Z_{r,n}} 
\cA^{k,m}_{\mu,\nu}(a)\cI_{\nu}\qquad(\mu\in Z_{r,n})
\end{equation}
for each $k\in\pr{1,\ldots,m-1}$, on the nonempty open set $V_0\subset U_0$. 
Here the coefficients $\cA^{k,m}_{\mu,\nu}(a)$ are meromorphic 
functions in $a$, and do not depend on the choice of $g(z)$. 
Furthermore, the determinant 
of the coefficient matrices $\cA^{k,m}(a)$ are given as follows\,$:$
For $k\in\pr{1,\ldots,r}$
\begin{equation}\label{eq:detA1}
\begin{split}
&\det \cA^{k,m}(a)\\
&=\prod_{\substack{i,j\ge0\\ \, i+j<n}}
\prod_{\substack{1\le l\le r\\[1pt] l\ne k}}
\left(\frac{e(t^ia_k,t^ja_l;p)}
{e(t^iqa_k,t^ja_l;p)}\right)^{\binom{n-i-j+r-3}{r-2}}
\prod_{i=0}^{n-1}
\prod_{\substack{1\le l\le m\\[1pt] l\ne k}}
\left(
\frac{\theta(t^ia_ka_l;p)}{\theta(t^iq^{-1}a_ma_l;p)}
\right)^{\binom{n-i+r-2}{r-1}},
\end{split}
\end{equation}
and for $k\in\pr{r+1,\ldots, m-1}$
\begin{equation}\label{eq:detA2}
\begin{split}
\det \cA^{k,m}(a)
=
\prod_{i=0}^{n-1}
\prod_{\substack{1\le l\le m\\[1pt] l\ne k}}
\left(
\frac{\theta(t^ia_ka_l;p)}{\theta(t^iq^{-1}a_ma_l;p)}
\right)^{\binom{n-i+r-2}{r-1}}.  
\end{split}
\end{equation}
\end{thm}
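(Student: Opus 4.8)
\medskip\noindent\emph{Proof proposal.}
The existence of the system \eqref{eq:TTI=SAI} with coefficients independent of $g(z)$ has essentially been carried out in the paragraphs preceding the statement: after the substitution $a_m\mapsto pq\,a_m$ the shifted integrals $\widetilde\cI_\mu=T_{pq,a_m}\cI_\mu=\la\varphi_\mu\ra_\Psi$, with $\varphi_\mu=E_\mu(a_{\{1,\ldots,r\}};z;p)\prod_ie(a_m,z_i;p)$, depend only on the classes of $\varphi_\mu$ in $H^\Psi_{\mathrm{sym}}$, which by Theorem~\ref{thm:modnabla} form a $\bC$-basis; expanding $(a_ka_m)^nT_{q,a_k}T_{q,a_m}^{-1}(\widetilde f_\mu)\prod_ie(a_k,z_i;p)$ in this basis defines $\widetilde\cA^{k,m}(a)$, and one puts $\cA^{k,m}(a)=T_{pq,a_m}^{-1}\widetilde\cA^{k,m}(a)$. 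It remains to (i) continue $\cI_\mu$ and $T_{q,a_k}T_{q,a_m}^{-1}\cI_\mu$ to $V_0$, and (ii) evaluate $\det\cA^{k,m}$.

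For (i) the plan is a contour-deformation argument. Regarding $a_m$ as a function of $(a_1,\ldots,a_{m-1})$ via the balancing condition, one locates the poles of $\Phi(z;a)$ in each $z_i$ and checks that, for $(a_1,\ldots,a_{m-1})\in U_0$, none crosses the torus $\bT^n$, so that $\cI_\mu$ is holomorphic on $U_0$; the hypothesis $|p|<|t|^{2n-2}$ is precisely what makes $V_0\neq\emptyset$ and, after a $q^{-1}$-shift of $a_m$, keeps $T_{q,a_k}T_{q,a_m}^{-1}\cI_\mu$ holomorphic on $V_0$. Since the entries of $\cA^{k,m}$ are ratios of finite products of theta functions in $a$, meromorphy in $a$ follows.

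For (ii), the core of the proof, the plan is to realize $\widetilde\cA^{k,m}$ as a composition of the elementary base-change maps catalogued in Sections~\ref{section:2}--\ref{section:3}, each with an explicitly known determinant. For $k\in\{1,\ldots,r\}$ one has $T_{q,a_k}T_{q,a_m}^{-1}\widetilde f_\mu=E_\mu(a_1,\ldots,qa_k,\ldots,a_r;z;p)$, and the class of $E_\mu(a_1,\ldots,qa_k,\ldots,a_r;z)\prod_ie(a_k,z_i;p)$ in $H^\Psi_{\mathrm{sym}}$ is brought to the basis $\{\varphi_\nu\}$ in two stages: first the change of the single interpolation parameter $qa_k\to a_k$ (Theorem~\ref{thm:transC}), contributing the determinant \eqref{eq:Cdet}, which is exactly the factor $\prod_{i+j<n}\prod_{1\le l\le r,\,l\ne k}(e(t^ia_k,t^ja_l;p)/e(t^iqa_k,t^ja_l;p))^{\binom{n-i-j+r-3}{r-2}}$ of \eqref{eq:detA1}; then the transition in $H^\Psi_{\mathrm{sym}}$ that replaces the ``face'' parameter $a_k$ by $a_m$, obtained by combining the base changes \eqref{eq:transB} (with determinant \eqref{eq:Bdet}) and the face factorization \eqref{eq:Emusthface}. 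Multiplying these, applying $T_{pq,a_m}^{-1}$ (which sends $a_m\mapsto(pq)^{-1}a_m$ and hence turns each $\theta(t^ia_ma_l;p)$ into a monomial multiple of $\theta(t^iq^{-1}a_ma_l;p)$), and verifying that the accumulated monomial prefactors --- in particular the power $(a_ka_m)^{n\binom{n+r-1}{r-1}}$ from the normalization and the power of $a_k/a_m$ from \eqref{eq:Bdet} --- cancel, yields \eqref{eq:detA1}. For $k\in\{r+1,\ldots,m-1\}$ the function $\widetilde f_\mu$ involves neither $a_k$ nor $a_m$, so the parameter-change stage drops out and only the face-parameter transition survives, giving \eqref{eq:detA2}.

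The step I expect to be the main obstacle is organizing this composition correctly. Several of the intermediate parameter configurations are degenerate --- a tuple containing both $a_k$ and a $q$-shift of $a_k$, or $a_k$ occurring simultaneously among the interpolation parameters and as the face factor --- so one must choose the order of the elementary moves, and the auxiliary bases of $H^\Psi_{\mathrm{sym}}$ through which one passes, so that every step is a genuine invertible base change to which \eqref{eq:Cdet}, \eqref{eq:Bdet}, \eqref{eq:Emusthface} apply; this, together with the bookkeeping of the $pq$- versus $q$-shifts and the cancellation of all scalar prefactors, is where the care is needed. The final reindexing of the resulting products over $Z_{r,n}$ into the binomial exponents of \eqref{eq:detA1}--\eqref{eq:detA2} is then routine, of the same type already performed in \eqref{eq:Cdet} and \eqref{eq:Bdet}.
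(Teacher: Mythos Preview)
Your overall strategy---decompose $\widetilde{\cA}^{k,m}$ as a product of the triangular $C$- and $B$-matrices from Sections~\ref{section:2}--\ref{section:3} and read off the determinant---is exactly the paper's, and your identification of the degeneracy as the main obstacle is apt. But the specific order of operations you propose differs from the paper's in a way that creates precisely that obstacle, whereas the paper's order avoids it.

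For $k\in\{1,\ldots,r\}$: instead of first changing $qa_k\to a_k$ (which lands you on $E_\alpha(a_{\{1,\ldots,r\}};z)\prod_ie(a_k,z_i;p)$ with $a_k$ simultaneously an interpolation parameter and the face parameter), the paper \emph{first} changes $a_k\to a_m$ in the interpolation basis via $C^{I_k;k,m}$, so that $E_\alpha(a_{I_k\cup\{m\}};z;p)$ no longer involves $a_k$ at all. Then $T_{q,a_k}$ acts only on the coefficients $C^{I_k;k,m}_{\mu,\alpha}$, and the resulting class $E_\alpha(a_{I_k\cup\{m\}};z)\prod_ie(a_k,z_i;p)$ is in the nondegenerate form to which the single $B$-move $B^{I_k;m,k}$ of \eqref{eq:transB} applies directly. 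Thus
\[
\widetilde{\cA}^{k,m}=(a_ka_m)^{n\binom{n+r-1}{r-1}}\,T_{q,a_k}\bigl(C^{I_k;k,m}\bigr)\,B^{I_k;m,k},
\]
a clean two-factor product with no auxiliary face factorization \eqref{eq:Emusthface} needed. Your route, once made precise, would factor through the same thing (since $C^{I_k;qa_k,a_k}C^{I_k;k,m}=T_{q,a_k}(C^{I_k;k,m})$), but at the cost of the detour you yourself flag.

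For $k\in\{r+1,\ldots,m-1\}$: your claim that ``only the face-parameter transition survives'' is right for the \emph{determinant} but not for the matrix. A single $B$-move cannot take $(a_{\{1,\ldots,r\}},\text{face }a_k)$ to $(a_{\{1,\ldots,r\}},\text{face }a_m)$, since $B^{I;k',l'}$ necessarily swaps one interpolation parameter for another. The paper picks any $l\in\{1,\ldots,r\}$ and writes the three-factor product
\[
\widetilde{\cA}^{k,m}=(a_ka_m)^{n\binom{n+r-1}{r-1}}\,C^{I_l;l,m}\,B^{I_l;m,k}\,C^{I_l;k,l};
\]
in the determinant the $e$-factors from the two $C$'s cancel against those in $\det B^{I_l;m,k}$, which is why \eqref{eq:detA2} shows only the $\theta$-product.
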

\begin{proof}{Proof}
Note that 
$\widetilde{f}_{\mu}(z)=f_{\mu}(z)$ in this case,
and that 
\begin{equation*}
T_{q,a_k}T_{q,a_m}^{-1}(\widetilde{f}_{\mu}(z))
\prod_{i=1}^{n}e(a_k,z_i;p)
=
T_{q,a_k}(E_{\mu}(a_{\pr{1,\ldots,r}};z;p))
\prod_{i=1}^{n}e(a_k,z_i;p)
\end{equation*}
for $k=1,\ldots,m-1$. 
We investigate the two cases 
$k\in\pr{1,\ldots,r}$ and 
$k\in\pr{r+1,\ldots,m-1}$ separately.  
\par
When $k\in\pr{1,\ldots,r}$, 
by the change of parameters $a_k\to a_m$,  
we have 
\begin{equation*}
E_{\mu}(a_{\pr{1,\ldots,r}};z;p)=
E_{\mu}(a_{I_{k}\cup\pr{k}};z;p)
=\sum_{\alpha\in Z_{I_k\cup\pr{m},n}}
C_{\mu,\alpha}^{I_k;k,m}
E_{\alpha}(a_{I_k\cup\pr{m}};z;p),
\end{equation*}
where $I_{k}=\pr{1,\ldots,r}\backslash\pr{k}$ 
and the coefficients $C_{\mu,\alpha}^{I_k;k,m}$ 
are specified by \eqref{eq:transCmunu}. 
Hence, 
\begin{equation*}
\begin{split}
&
T_{q,a_k}T_{q,a_m}^{-1}\big(
E_{\mu}(a_{\pr{1,\ldots,r}};z;p)\prod_{i=1}^{n}e(a_m,z_i;p)\Psi(z)
\big)
\\
&
=
(a_ka_m)^n
\sum_{\alpha\in Z_{I_k\cup\pr{m},n}}
T_{q,a_k}(C_{\mu,\alpha}^{I_k;k,m})
E_{\alpha}(a_{I_k\cup\pr{m}};z;p)
\prod_{i=1}^{n}e(a_k,z_i;p)\Psi(z). 
\end{split}
\end{equation*}
We now apply \eqref{eq:transB} for the reduction 
in $H^{\Psi}_{\mathrm{sym}}$ from the $k$th face 
to the $m$th face: 
\begin{equation*}
\begin{split}
&
T_{q,a_k}T_{q,a_m}^{-1}\big(
E_{\mu}(a_{\pr{1,\ldots,r}};z;p)\prod_{i=1}^{n}e(a_m,z_i;p)\Psi(z)
\big)
\\
&
\equiv
(a_ka_m)^n
\sum_{\alpha\in Z_{I_k\cup\pr{m},n}}
\sum_{\mu\in Z_{r,n}}
T_{q,a_k}(C_{\mu,\alpha}^{I_k;k,m})
B_{\alpha,\mu}^{I_k;m,k}
E_{\nu}(a_{\pr{1,\ldots,r}};z;p)
\prod_{i=1}^{n}e(a_m,z_i;p)
\Psi(z). 
\end{split}
\end{equation*}
Hence we have 
\begin{equation*}
\widetilde{\cA}^{k,m}_{\mu,\nu}(a)=(a_ka_m)^n
\sum_{\alpha\in Z_{I_k\cup\pr{m},n}} 
T_{q,a_k}(C_{\mu,\alpha}^{I_k,k,m})
B_{\alpha,\nu}^{I_k;m,k}. 
\end{equation*}
\par
When $k\in\pr{r+1,\ldots,m-1}$, choosing an 
index $l\in\pr{1,\ldots,r}$ arbitrarily, we apply the 
change of parameters $a_l\to a_m$ in advance, 
and then perform the reduction from the $k$th 
face to the $m$th face: 
\begin{equation*}
\begin{split}
&
T_{q,a_k}T_{q,a_m}^{-1}\big(
E_{\mu}(a_{\pr{1,\ldots,r}};z;p)\prod_{i=1}^{n}e(a_m,z_i;p)\Psi(z)
\big)
\\
&=
(a_ka_m)^n
E_{\mu}(a_{\pr{1,\ldots,r}};z;p)
\prod_{i=1}^{n}e(a_k,z_i;p)\Psi(z)
\\
&=
(a_ka_m)^n
\sum_{\alpha,\beta,\nu}
C_{\mu,\alpha}^{I_l;l,m}
E_{\alpha}(a_{I_l\cup\pr{m}};z;p)
\prod_{i=1}^{n}e(a_k,z_i;p)\Psi(z)
\\
&\equiv
(a_ka_m)^n
\sum_{\alpha,\beta}
C_{\mu,\alpha}^{I_l;l,m}
B_{\alpha,\beta}^{I_l;m,k}
E_{\beta}(a_{I_l\cup\pr{k}};z;p)
\prod_{i=1}^{n}e(a_m,z_i;p)\Psi(z)
\\
&=
(a_ka_m)^n
\sum_{\alpha,\beta}
C_{\mu,\alpha}^{I_l;l,m}
B_{\alpha,\beta}^{I_l;m,k}
C_{\beta,\nu}^{I_l;k,l}
E_{\nu}(a_{\pr{1,\ldots,r}};z;p)
\prod_{i=1}^{n}e(a_m,z_i;p)\Psi(z).
\end{split}
\end{equation*}
Hence we have 
\begin{equation*}
\widetilde{\cA}_{\mu,\nu}^{k,m}(a)
=(a_ka_m)^n
\sum_{\alpha\in Z_{I_l\cup\pr{m},n}}
\sum_{\beta\in Z_{I_l\cup\pr{k},n}}
C_{\mu,\alpha}^{I_l;l,m}
B_{\alpha,\beta}^{I_l;m,k}
C_{\beta,\nu}^{I_l;k,l}. 
\end{equation*}
As we explained before, the coefficient matrices $\cA^{k,m}(a)$ 
are determined from $\widetilde{\cA}^{k,m}(a)$ 
by $\cA^{k,m}(a)=T_{pq,a_m}^{-1}(\widetilde{\cA}^{k,m}(a))$. 
\par
Note that the matrices 
$C^{l;k,l}=\big(C_{\mu,\nu}^{I;k,l}\big)_{\mu,\nu}$ 
and 
$B^{I;k,l}=\big(B_{\mu,\nu}^{I;k,l}\big)_{\mu,\nu}$ 
are 
lower triangular and upper triangular respectively,  
with respect to the partial ordering of $\bN^I$. 
Hence, 
the determinant of 
$\widetilde{\cA}^{k,m}(a)=
(\widetilde{\cA}_{\mu,\nu}^{k,m}(a))_{\mu,\nu\in Z_{r,n}}$
is computed by \eqref{eq:Cdet} and \eqref{eq:Bdet} 
as follows: 
For 
$k\in\pr{1,\ldots,r}$, 
\begin{equation}\label{eq:detwtA1}
\begin{split}
\det \widetilde{\cA}^{k,m}(a)&=
(a_k^{-1}a_m^{2r+1})^{\binom{n+r-1}{r}}
\prod_{0\le i+j<n}
\prod_{\substack{1\le l\le r\\[1pt] l\ne k}}
\left(
\frac{e(t^ia_k,t^ja_l;p)}{e(t^iqa_k,t^ja_l;p)}
\right)^{\binom{n-i-j+r-3}{r-2}}
\\
&\qquad\cdot
\prod_{i=0}^{n-1}
\prod_{\substack{1\le l\le m-1\\[1pt] l\ne k}}
\left(
\frac
{\theta(t^ia_ka_l;p)}
{\theta(t^ia_ma_l;p)}
\right)^{\binom{n-i+r-2}{r-1}}, 
\end{split}
\end{equation}
and for $k\in\pr{r+1,\ldots,m-1}$
\begin{equation}\label{eq:detwtA2}
\begin{split}
\det \widetilde{\cA}^{k,m}(a)
&=
(a_k^{-1}a_m^{2r+1})^{\binom{n+r-1}{r}}
\prod_{i=0}^{n-1}
\prod_{\substack{1\le l\le m-1\\[1pt] l\ne k}}
\left(
\frac{\theta(t^ia_ka_l;p)}{\theta(t^ia_ma_l;p)}
\right)^{\binom{n-i+r-2}{r-1}}. 
\end{split}
\end{equation}
The determinants 
$\det  \cA^{k,m}(a)$ are obtained from these by 
applying $T_{pq,a_m}^{-1}$. 
\end{proof}
\par
\medskip
\begin{rem}{\rm
Under the assumption of Theorem \ref{thm:detcA}, by the $q$-difference equations \eqref{eq:TTI=SAI} 
the integrals $\cI_\mu(a)$ $(\mu\in Z_{r,n})$, regarded as functions on $(a_1,\ldots,a_{m-1})$, 
are continued meromorphically to the whole algebraic torus $(\mathbb{C}^*)^{m-1}$, 
and hence define meromorphic functions on the hypersurface $a_1\cdots a_m t^{2n-2}=pq$ in $(\mathbb{C}^*)^m$. 
}
\end{rem}
\subsection{Symmetry of the difference system 
with respect to $(p,q)$.}
In Theorem \ref{thm:detcA}, under the condition  $|p|<|t|^{2n-2}$ we derived the system of 
$q$-difference equations for the integrals
\begin{equation*}
\cI_{\mu}(a)=\la E_{\mu}(a_{\pr{1,\ldots,r}};z;p),g(z)\ra_{\Phi}
\quad(\mu\in Z_{r,n})
\end{equation*}
defined by the interpolation basis 
of $\cH^{(p)}_{r-1, n}$ and a holomorphic function 
$g(z)\in\cH^{(q)}_{r-1,n}$.  
In this formulation, 
we imposed on $g(z)$ 
the $q$-shift invariance with respect to the $a$ parameters 
so that 
the coefficient matrices should not depend on the 
choice of $g(z)$. 
We now modify the interpolation basis appropriately 
in order to make the $q$-difference system consistent 
with the $(p,q)$ symmetry of the bilinear form 
\begin{equation*}
\la\ ,\ \ra_{\Phi}:\ \ 
\cH^{(p)}_{r-1,n}\times 
\cH^{(q)}_{r-1,n}\to \bC.
\end{equation*}
Recall 
the dual Cauchy formula \eqref{eq:dualCauchy}
for the interpolation functions 
$E_{\mu}(a_{\pr{1,\ldots,r}};z;p)$:
For two sets of variables $z=(z_1,\ldots,z_n)$ and $w=(w_1,\ldots,w_{r-1})$, we have
\begin{equation}\label{eq:dualCauchyar}
\prod_{j=1}^{n}\prod_{l=1}^{r-1} e(z_j,w_i;p)
=\sum_{\mu\in Z_{r,n}} 
E_{\mu}(a_{\pr{1,\ldots,r}};z;p)
F_{\mu}(a_{\pr{1,\ldots,r}};w;p),
\end{equation}
where 
\begin{equation}\label{eq:defFmu}
F_{\mu}(a_{\pr{1,\ldots,r}};w;p)=
\prod_{k=1}^{r}\prod_{l=1}^{r-1} e(a_k,w_l;p)_{t,\mu_k}
\quad(\mu\in Z_{r,n}).
\end{equation}
In view of this formula, we set 
\begin{equation}\label{eq:deffmu}
f_{\mu}(z;w;p)=E_{\mu}(a_{\pr{1,\ldots,r}};z;p)
F_{\mu}(a_{\pr{1,\ldots,r}};w;p)\quad(\mu\in Z_{r,n}). 
\end{equation}
\begin{lem}
The functions 
$f_{\mu}(z;w;p)$ defined by \eqref{eq:deffmu} are invariant 
with respect to the $p$-shifts in the $a$ parameters,  namely, 
\begin{equation*}
T_{p,a_k}f_{\mu}(z;w;p)=f_{\mu}(z;w;p)\quad (k=1,\ldots,m). 
\end{equation*}
\end{lem}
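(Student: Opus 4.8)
The plan is to show that the two factors $E_{\mu}(a_{\pr{1,\ldots,r}};z;p)$ and $F_{\mu}(a_{\pr{1,\ldots,r}};w;p)$ are \emph{not} individually invariant under the $p$-shift $T_{p,a_{k}}$, but that each is multiplied by a scalar, and that these two scalars are reciprocal to one another, so that the product $f_{\mu}(z;w;p)$ is fixed. First I would record the elementary quasi-periodicity of the building block $e(u,v;p)=u^{-1}\theta(uv;p)\theta(uv^{-1};p)$. Using $\theta(pu;p)=-u^{-1}\theta(u;p)$ together with $\theta(u^{-1};p)=-u^{-1}\theta(u;p)$, one computes at once
\[
e(pu,v;p)=(pu^{2})^{-1}e(u,v;p),\qquad e(u,pv;p)=(pv^{2})^{-1}e(u,v;p),
\]
and hence for a $t$-shifted factorial $e(pu,v;p)_{t,\nu}=\prod_{j=0}^{\nu-1}e(pt^{j}u,v;p)=\bigl(p^{\nu}t^{\nu(\nu-1)}u^{2\nu}\bigr)^{-1}e(u,v;p)_{t,\nu}$.

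Next I would establish that, for every $\mu\in Z_{r,n}$ and $k\in\pr{1,\ldots,r}$,
\[
T_{p,a_{k}}E_{\mu}(a_{\pr{1,\ldots,r}};z;p)
=\bigl(p^{\mu_{k}}t^{\mu_{k}(\mu_{k}-1)}a_{k}^{2\mu_{k}}\bigr)^{r-1}\,
E_{\mu}(a_{\pr{1,\ldots,r}};z;p).
\]
Two routes are available. One may argue by induction on $n$ from the recursion \eqref{eq:recursion}: in the summand indexed by $k'$ with $\mu_{k'}>0$, the inductive hypothesis applied to $E_{\mu-\ep_{k'}}(a_{\pr{1,\ldots,r}};z_{1},\ldots,z_{n-1})$ and the $n=1$ case applied to $E_{\ep_{k'}}(t^{\mu-\ep_{k'}}a_{\pr{1,\ldots,r}};z_{n})$ both produce scalars, and one checks that, whether $k'\neq k$ (in which case only the first factor contributes) or $k'=k$ (in which case the $t$-exponents $(\mu_{k}-1)(\mu_{k}-2)+2(\mu_{k}-1)=\mu_{k}(\mu_{k}-1)$ combine correctly), the total scalar is $\bigl(p^{\mu_{k}}t^{\mu_{k}(\mu_{k}-1)}a_{k}^{2\mu_{k}}\bigr)^{r-1}$, independent of $k'$. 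Alternatively one reads the same fact off term by term from the explicit formula \eqref{eq:explicit}, where the numerator factors involving $t^{\bullet}a_{k}$ cancel against the denominator factors with second argument $t^{\bullet}a_{k}$, leaving only the contribution of the denominator factors with $k_{i}=k$, which over the $\mu_{k}$ values $\mu^{(i-1)}_{k}=0,1,\ldots,\mu_{k}-1$ and the $r-1$ indices $l\neq k$ yields precisely the stated scalar. The crucial structural point in either route is that the factor is a \emph{pure} scalar, so no mixing of the indices $\mu$ occurs.

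Then, directly from the definition \eqref{eq:defFmu}, $a_{k}$ enters $F_{\mu}(a_{\pr{1,\ldots,r}};w;p)$ only through $\prod_{l=1}^{r-1}e(a_{k},w_{l};p)_{t,\mu_{k}}$, so the quasi-periodicity of $e(\cdot,\cdot;p)_{t,\nu}$ recorded above gives
\[
T_{p,a_{k}}F_{\mu}(a_{\pr{1,\ldots,r}};w;p)
=\bigl(p^{\mu_{k}}t^{\mu_{k}(\mu_{k}-1)}a_{k}^{2\mu_{k}}\bigr)^{-(r-1)}\,
F_{\mu}(a_{\pr{1,\ldots,r}};w;p).
\]
Multiplying the last two displays, the scalars cancel and $T_{p,a_{k}}f_{\mu}(z;w;p)=f_{\mu}(z;w;p)$, which is the assertion. (As a consistency check, one may instead deduce the quasi-periodicity of $F_{\mu}$ from that of $E_{\mu}$ by applying $T_{p,a_{k}}$ to the dual Cauchy formula \eqref{eq:dualCauchyar}, whose left-hand side is manifestly independent of $a$, and invoking the linear independence over $\bC$ of the $E_{\mu}(a_{\pr{1,\ldots,r}};z;p)$.)

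The only real obstacle is the exponent bookkeeping in the scalar $\bigl(p^{\mu_{k}}t^{\mu_{k}(\mu_{k}-1)}a_{k}^{2\mu_{k}}\bigr)^{r-1}$: one must track carefully the $t$-powers generated by the shifts $t^{j}a_{k}$ inside the $t$-shifted factorials (and, in the recursion route, the splitting of those powers between the $n-1$ variable part and the last variable), and confirm that the contributions from $E_{\mu}$ and from $F_{\mu}$ are exact inverses. They are, and this cancellation is in fact the structural reason the product $f_{\mu}$ is the natural object to work with.
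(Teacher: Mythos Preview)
Your proof is correct. You and the paper agree on the structure---compute the scalar by which each of $E_{\mu}$ and $F_{\mu}$ transforms under $T_{p,a_{k}}$ and observe they are reciprocal---but you reverse the order of the argument. The paper computes the $F_{\mu}$ transformation directly from \eqref{eq:defFmu} (as you do), then applies $T_{p,a_{k}}$ to the dual Cauchy formula \eqref{eq:dualCauchyar} and uses linear independence of the $F_{\mu}(a_{\pr{1,\ldots,r}};w;p)$ \emph{as functions of $w$} to read off the $E_{\mu}$ transformation without ever touching the recursion or the explicit formula. Your primary route instead establishes the $E_{\mu}$ scalar directly via \eqref{eq:recursion} or \eqref{eq:explicit}, which costs the exponent bookkeeping you flag but has the virtue of being self-contained; your parenthetical ``consistency check'' is essentially the paper's argument run in the opposite direction (linear independence of $E_{\mu}$ in $z$ rather than of $F_{\mu}$ in $w$). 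One small omission: you treat only $k\in\pr{1,\ldots,r}$ explicitly, whereas the lemma is stated for $k=1,\ldots,m$; the remaining cases are trivial since $f_{\mu}$ does not depend on $a_{r+1},\ldots,a_{m}$, and the paper notes this at the outset.
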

\begin{proof}{Proof}
Since $f_{\mu}(z;w;p)$ do not depend on $a_k$ ($k=r+1,\ldots,m$), 
we show the invariance of $f_{\mu}(z;w;p)$ with respect to 
$T_{p,a_k}$ ($k=1,\ldots,r$). 
Applying $T_{p,a_k}$ to \eqref{eq:dualCauchyar} we have
\begin{equation*}
\prod_{j=1}^{n}\prod_{l=1}^{r-1} e(z_j,w_i;p)
=\sum_{\mu\in Z_{r,n}} 
T_{p,a_k}(E_{\mu}(a_{\pr{1,\ldots,r}};z;p))
T_{p,a_k}(F_{\mu}(a_{\pr{1,\ldots,r}};w;p)).
\end{equation*}
By \eqref{eq:defFmu} 
it is directly 
checked that
\begin{equation*}
T_{p,a_l}F_{\mu}(a_{\pr{1,\ldots,r}};w;p)=
F_{\mu}(a_{\pr{1,\ldots,r}};w;p)
(t^{2\binom{\mu_k}{2}}p^{\mu_k}a_k^{2\mu_k})^{-r+1}
\quad(k=1,\ldots,r). 
\end{equation*}
Since $F_{\mu}(a_{\pr{1,\ldots,r}};w;p)$ are linearly independent 
as functions in $w$, we see that 
\begin{equation*}
T_{p,a_k}(
E_{\mu}(a_{\pr{1,\ldots,r}};z;p))
=
E_{\mu}(a_{\pr{1,\ldots,r}};z;p)
(t^{2\binom{\mu_k}{2}}p^{\mu_k}a_k^{2\mu_k})^{r-1}. 
\end{equation*}
It also implies that the functions $f_{\mu}(z;w;p)=
E_{\mu}(a_{\pr{1,\ldots,r}};z;p)
F_{\mu}(a_{\pr{1,\ldots,r}};w;p)
$ 
are invariant with respect to $T_{p,a_k}$ ($k=1,\ldots,r$). 
\end{proof}
\par
Introducing a new set of parameters $u=(u_1,\ldots,u_{r-1})$, 
we consider the integrals 
\begin{equation*}
I_{\mu}(a)=
F_{\mu}(a;u;p)\cI_{\mu}(a)
=\la f_{\mu}(z;u;p),g(z)\ra_{\Phi}.  
\end{equation*}
Then the system of 
$q$-difference equations to be satisfied by 
$I_{\mu}=I_{\mu}(a)$ are given by 
\begin{equation*}
T_{q,a_k}T_{q,a_m}^{-1}I_{\mu}
=\sum_{\nu\in Z_{r,n}}A_{\mu,\nu}^{k,m}(a)
I_{\nu}
\qquad(\mu\in Z_{r,n}),
\end{equation*}
where 
\begin{equation*}
A_{\mu,\nu}^{k,m}(a)
=\frac{T_{p,a_k}F_{\mu}(a_{\pr{1,\ldots,r}};u;p)}
{F_{\nu}(a_{\pr{1,\ldots,r}};u;p)}
\cA_{\mu,\nu}^{k,m}(a)\quad(k=1,\ldots,m-1).
\end{equation*}
To be more precise, we have 
\begin{equation*}
A_{\mu,\nu}^{k,m}(a)
=\begin{cases}
\ds\ 
\prod_{j=1}^{r-1}
\frac{e(qa_k,u_j;p)_{t,\mu_k}}{e(a_k,u_j;p)_{t,\nu_k}}
\prod_{\substack{1\le l\le r\\[1pt] l\ne k}}
\prod_{j=1}^{r-1}
\frac{e(a_l,u_j;p)_{t,\mu_l}}{e(a_l,u_j;p)_{t,\nu_l}}
\cA_{\mu,\nu}^{k,m}(a)\ \ &(k=1,\dots,r),
\\
\ds\ 
\prod_{l=1}^{r}
\prod_{j=1}^{r-1}
\frac{e(a_l,u_j;p)_{t,\mu_l}}{e(a_l,u_j;p)_{t,\nu_l}}
\,
\cA_{\mu,\nu}^{k,m}(a)\ \ &(k=r+1,\ldots,m-1). 
\end{cases}
\end{equation*}
Hence the determinant 
of the matrix 
$A^{k,m}(a)=(A^{k,m}_{\mu,\nu}(a))_{\mu,\nu\in Z_{r,n}}$
is computed by \eqref{eq:detA1}, \eqref{eq:detA2}
as 
\begin{equation}\label{eq:detcA1}
\begin{split}
\det A^{k,m}(a)
&=
\prod_{i=1}^n
\prod_{l=1}^{r}
\left(
\frac{e(qa_k,u_l;p)_{t,i}}{e(a_k,u_l;p)_{t,i}}
\right)^{\binom{n-i+r-2}{r-2}}
\\
&\quad\cdot
\prod_{\substack{i,j\ge0\\\, i+j<n}}
\prod_{\substack{1\le l\le r\\[1pt] l\ne k}}
\left(\frac{e(t^ia_k,t^ja_l;p)}
{e(t^iqa_k,t^ja_l;p)}\right)^{\binom{n-i-j+r-3}{r-2}}
\\
&\quad\cdot
\prod_{i=0}^{n-1}
\prod_{\substack{1\le l\le m\\[1pt] l\ne k}}
\left(
\frac{\theta(t^ia_ka_l;p)}{\theta(t^iq^{-1}a_ma_l;p)}
\right)^{\binom{n-i+r-2}{r-1}}
\end{split}
\end{equation}
for $k\in\pr{1,\ldots,r}$, 
and 
\begin{equation}\label{eq:detcA2}
\begin{split}
\det A^{k,m}(a)
=
\prod_{i=0}^{n-1}
\prod_{\substack{1\le l\le m\\[1pt] l\ne k}}
\left(
\frac{\theta(t^ia_ka_l;p)}{\theta(t^iq^{-1}a_ma_l;p)}
\right)^{\binom{n-i+r-2}{r-1}}  
\end{split}
\end{equation}
for $k\in\pr{r+1,\ldots, m-1}$.
When we need to make the bases 
$p,q$ and the parameters $u$ explicit 
we use the notation 
$A^{k,m}(a;u;p,q)$ for 
$A^{k,m}(a)$.
\par\medskip
In order to deal with the two bases 
$(p,q)$ on an equal footing, 
we introduce two sets of parameters 
$u=(u_1,\ldots,u_{r-1})$, $v=(v_1,\ldots,v_{r-1})$, 
and define
\begin{equation}\label{eq:defcImunu}
\begin{split}
I_{\mu,\nu}(a;u,v)
&=\la f_{\mu}(z;u;p),f_{\nu}(z;v;q)\ra_{\Phi}
\\
&=\int_{\bT^n}
f_{\mu}(z;u;p)
f_{\nu}(z;v;q)
\Phi(z;a)\omega_n(z)
\end{split}
\end{equation}
for $\mu,\nu\in Z_{r,n}$. 
We suppose that $|p|<|t|^{2n-2}$ and $|q|<|t|^{2n-2}$.
Then, by the symmetry with respect to $(p,q)$
the square matrix 
\begin{equation}\label{eq:defcI}
I(a;u,v)=(I_{\mu,\nu}(a;u,v))_{\mu,\nu\in 
Z_{r,n}}
\end{equation}
satisfies the following system of 
$q$- and $p$-difference equations with respect to the 
$a$ parameters: For each $k=1,\ldots,m-1$, 
\begin{equation}\label{eq:pqDiffEq}
\begin{split}
T_{q,a_k}T_{q,a_m}^{-1}I_{\mu,\lambda}(a;u,v)
&=\sum_{\nu\in Z_{r,n}}A_{\mu,\nu}^{k,m}(a;u;p,q)I_{\nu,\lambda}(a;u,v), 
\\
T_{p,a_k}T_{p,a_m}^{-1}I_{\lambda,\mu}(a;u,v)
&=
\sum_{\nu\in Z_{r,n}}
A_{\mu,\nu}^{k,m}(a;v;q,p)
I_{\lambda,\nu}(a;u,v)
\qquad(\lambda,\mu\in Z_{r,n}),
\end{split}
\end{equation}
or equivalently 
\begin{equation}
\label{eq:TTI=AI}
\begin{split}
T_{q,a_k}T_{q,a_m}^{-1}I(a;u,v)
&=A^{k,m}(a;u;p,q)I(a;u,v),
\\
T_{p,a_k}T_{p,a_m}^{-1}I(a;u,v)
&=
I(a;u,v)
A^{k,m}(a;v;q,p)^{\mathrm{t}}
\end{split}
\end{equation}
in the matrix notation. 
Hence the determinant $J=\det I(a;u,v)$ satisfies 
the $q$- and $p$-difference equations 
\begin{equation}\label{eq:pqDEdet}
\begin{split}
T_{q,a_k}T_{q,a_m}^{-1}J
&=\det A^{k,m}(a;u;p,q)\,J,
\\
T_{p,a_k}T_{p,a_m}^{-1} J
&=
\det A^{k,m}(a;v;q,p)\,
J\quad(k=1,\ldots,m-1)
\end{split}
\end{equation}
of rank one.  
By inspecting 
the explicit formulas \eqref{eq:detcA1}, \eqref{eq:detcA2}
for the determinants of the coefficient matrices, 
it is directly verified that the function 
\begin{equation}\label{eq:partsol}
\begin{split}
J_0(a;u,v)&=
\prod_{i=0}^{n-1}\prod_{1\le k<l\le m}
\Gamma(t^ia_ka_l;p,q)^{\binom{n-i+r-2}{r-1}}
\\
&
\quad\cdot
\frac{
\prod_{i=1}^n
\prod_{k=1}^{r}
\prod_{l=1}^{r-1}
\left(
e(a_k,u_l;p)_{t,i}e(a_k,v_l;q)_{t,i}
\right)^{\binom{n-i+r-2}{r-2}}
}{
\prod_{0\le i+j<n}\prod_{1\le k<l\le r}
\left(
e(t^ia_k,t^ja_l;p)
e(t^ia_k,t^ja_l;q)
\right)
^{\binom{n-i-j+r-3}{r-2}}}
\end{split}
\end{equation}
provides with a particular solution of the 
system of 
$q$- and $p$- difference equations. 
Since the two meromorphic 
functions $\det I(a;u,v)$ and 
$J_0(a;u,v)$ both satisfy the difference 
equations 
\eqref{eq:pqDEdet}, the ratio 
$\det I(a;u,v)/J_0(a;u,v)$ is invariant 
with respect to the $q$- and $p$-shifts 
in the $a$ parameters simultaneously. 
This implies that this ratio is a constant 
which does not depend on the $a$ parameters. 
\begin{thm}\label{thm:detcI} 
Suppose that $|p|<|t|^{2n-2}$ and $|q|<|t|^{2n-2}$. 
Under the condition 
$t^{2n-2}a_1\cdots a_m=pq$ with $m=2r+4$, 
let $I(a;u,v)=(I_{\mu,\nu}(a;u,v))_{\mu,\nu\in Z_{r,n}}$ 
be the square matrix defined by the integrals 
\eqref{eq:defcImunu}.
The determinant of $I(a;u,v)$ is expressed as 
\begin{equation*}
\begin{split}
\det\, I(a;u,v)
&=c_{r,n}\,
\prod_{i=0}^{n-1}
\prod_{1\le k<l\le m}
\Gamma(t^ia_ka_l;p,q)^{\binom{n-i+r-2}{r-1}}
\\
&\quad\cdot
\frac{
\prod_{i=1}^n
\prod_{k=1}^{r}
\prod_{l=1}^{r-1}
\left(
e(a_k,u_l;p)_{t,i}e(a_k,v_l;q)_{t,i}
\right)^{\binom{n-i+r-2}{r-2}}
}{
\prod_{0\le i+j<n}\prod_{1\le k<l\le r}
\left(
e(t^ia_k,t^ja_l;p)
e(t^ia_k,t^ja_l;q)
\right)
^{\binom{n-i-j+r-3}{r-2}}}, 
\end{split}
\end{equation*}
where $c_{r,n}$ is a constant which does not 
depend on the parameters $a=(a_1,\ldots,a_m)$.  
\end{thm}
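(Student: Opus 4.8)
The plan is to use the rank-one $q$- and $p$-difference systems \eqref{eq:pqDEdet} already derived for the scalar $J=\det I(a;u,v)$, together with the observation that the explicit function $J_0(a;u,v)$ of \eqref{eq:partsol} solves the same systems; the quotient $J/J_0$ will then be forced to be independent of $a$.

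First I would collect the two ingredients. On one hand, by the computation preceding the statement (Theorem \ref{thm:detcA} together with \eqref{eq:TTI=AI} and \eqref{eq:pqDEdet}), $J$ satisfies $T_{q,a_k}T_{q,a_m}^{-1}J=\det A^{k,m}(a;u;p,q)\,J$ and $T_{p,a_k}T_{p,a_m}^{-1}J=\det A^{k,m}(a;v;q,p)\,J$ for $k=1,\ldots,m-1$, with coefficients given explicitly by \eqref{eq:detcA1}--\eqref{eq:detcA2}; moreover, by the remark following Theorem \ref{thm:detcA}, each $I_{\mu,\nu}(a;u,v)$, hence $J$, continues to a meromorphic function on the whole hypersurface $t^{2n-2}a_1\cdots a_m=pq$, i.e.\ on $(\mathbb{C}^\ast)^{m-1}$ in the coordinates $(a_1,\ldots,a_{m-1})$. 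On the other hand, $J_0(a;u,v)$ is a manifestly nonzero meromorphic function, and I would verify directly that it satisfies the same system \eqref{eq:pqDEdet}: applying $T_{q,a_k}T_{q,a_m}^{-1}$ and using $\Gamma(qu;p,q)=\theta(u;p)\Gamma(u;p,q)$ for the elliptic gamma factors, the shift of the first argument for the factors $e(a_k,u_l;p)_{t,i}$ and $e(t^ia_k,t^ja_l;p)$, the balancing relation $a_m=pq\,t^{-(2n-2)}/(a_1\cdots a_{m-1})$ to rewrite the factors involving $a_m$, and Pascal's rule to recombine the binomial exponents $\binom{n-i+r-2}{r-1}$, $\binom{n-i-j+r-3}{r-2}$, $\binom{n-i+r-2}{r-2}$, one matches $T_{q,a_k}T_{q,a_m}^{-1}J_0/J_0$ with the right-hand side of \eqref{eq:detcA1} for $1\le k\le r$ and of \eqref{eq:detcA2} for $r+1\le k\le m-1$; the $p$-difference equations follow by the $p\leftrightarrow q$, $u\leftrightarrow v$ symmetry.

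With both facts in hand, the ratio $\rho(a)=J(a;u,v)/J_0(a;u,v)$ is a meromorphic function on $(\mathbb{C}^\ast)^{m-1}$ satisfying $T_{q,a_k}\rho=\rho$ and $T_{p,a_k}\rho=\rho$ for each $k=1,\ldots,m-1$ (the $a_m$-shift being absorbed into the remaining coordinates through the balancing condition). Since $p$ and $q$ are generic, the subgroup they generate in $\mathbb{C}^\ast$ has infinite image in the elliptic curve $\mathbb{C}^\ast/p^{\bZ}$, so a meromorphic function invariant under both shifts in a given variable has invariant divisor and is therefore constant in that variable; applying this to each of $a_1,\ldots,a_{m-1}$ in turn shows $\rho$ is a constant $c_{r,n}$ independent of $a$, which is the assertion. (If $J\equiv 0$ the statement holds trivially with $c_{r,n}=0$.)

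The main obstacle lies entirely in the second step: the bookkeeping in checking that $J_0$ solves the $q$-difference system, in particular verifying that the pairs $\{k,m\}$ and $\{l,m\}$ involving $a_m$ produce exactly the factors $\theta(t^iq^{-1}a_ma_l;p)$ appearing in the denominators of \eqref{eq:detcA1}--\eqref{eq:detcA2} after use of the balancing relation. No new idea beyond the gamma functional equations, the balancing relation, and elementary binomial identities is needed; the conceptual content is that all the hard analytic work has already been invested in Theorem \ref{thm:detcA}, and $J_0$ is simply the canonical product solution of the resulting rank-one system.
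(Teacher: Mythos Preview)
Your proposal is correct and follows essentially the same route as the paper: the argument there is exactly the short paragraph preceding the theorem, namely that $J=\det I(a;u,v)$ and $J_0$ both satisfy the rank-one system \eqref{eq:pqDEdet}, so their ratio is simultaneously $q$- and $p$-periodic in each $a_k$ and hence constant. Your added detail on why joint $(p,q)$-periodicity forces constancy, and the trivial case $J\equiv 0$, are fine elaborations but not departures from the paper's proof.
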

\par
In the next section, we will give an explicit formula for 
$c_{r,n}$ as a function of $(p,q,t)$ as in Theorem \ref{thm:1A}. 
Since $c_{r,n}\ne 0$, 
$I(a;u,v)$ is in fact a fundamental 
matrix solution of the system of 
$q$-difference equations
\begin{equation*}
T_{q,a_k}T_{q,a_m}^{-1}I(a;u,v)=A^{k,m}(a;u;p,q)I(a;u,v)
\qquad(k=1,\ldots,m-1). 
\end{equation*}
\par\medskip
As in the previous subsection, we consider the integrals 
\begin{equation*}
K_{\mu,\nu}(a)=\la
E_{\mu}(a_{\pr{1,\ldots,r}};z;p), E_{\nu}(a_{\pr{1,\ldots,r}};z;q)
\ra_{\Phi}
\quad(\mu,\nu\in Z_{r,n})
\end{equation*}
defined by the interpolation bases for 
$\cH_{r-1,n}^{(p)}$, $\cH_{r-1,n}^{(q)}$, 
and set 
$K(a)=\big(K_{\mu,\nu}(a)\big)_{\mu,\nu\in Z_{r,n}}$. 
Then we have
\begin{equation*}
I_{\mu,\nu}(a;u,v)=F_{\mu}(a_{\pr{1,\ldots,r}};u;p)
F_{\nu}(a_{\pr{1,\ldots,r}};v;q)
K_{\mu,\nu}(a)\quad(\mu,\nu\in Z_{r,n}), 
\end{equation*}
which implies
\begin{equation*}
\begin{split}
\det I(a;u,v)
&=
\det K(a)\,
\prod_{\mu\in Z_{r,n}}
F_{\mu}(a_{\pr{1,\ldots,r}};u;p)
F_{\mu}(a_{\pr{1,\ldots,r}};v;q)
\\
&=
\det K(a)\,
\prod_{i=1}^{n}
\prod_{k=1}^{r}
\prod_{l=1}^{r-1}
\big(e(a_k,u_l;p)_{t,i}
e(a_k,v_l;q)_{t,i}
\big)^{\binom{n-i+r-2}{r-2}}. 
\end{split}
\end{equation*}
By Theorem \ref{thm:detcI} the determinant 
of the matrix $K(a)$ 
is expressed as follows. 
\begin{cor} \label{cor:detK(a)}
Under the condition 
$t^{2n-2}a_1\cdots a_m=pq$ with $m=2r+4$, 
we have 
\begin{equation*}
\det K(a)=c_{r,n}
\frac
{
\prod_{i=0}^{n-1}
\prod_{1\le k<l\le m}
\Gamma(t^ia_ka_l;p,q)^{\binom{n-i+r-2}{r-1}}
}
{\prod_{0\le i+j<n}\prod_{1\le k<l\le r}
\left(
e(t^ia_k,t^ja_l;p)
e(t^ia_k,t^ja_l;q)
\right)
^{\binom{n-i-j+r-3}{r-2}}},
\end{equation*}
where $c_{r,n}$ is a constant independent of the parameters $a=(a_1,\ldots,a_m)$. 
\end{cor}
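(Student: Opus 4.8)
The plan is to obtain Corollary \ref{cor:detK(a)} directly from Theorem \ref{thm:detcI} by removing the explicit scalar prefactors that relate $I(a;u,v)$ to $K(a)$. The starting point is the factorization $f_{\mu}(z;u;p)=E_{\mu}(a_{\pr{1,\ldots,r}};z;p)\,F_{\mu}(a_{\pr{1,\ldots,r}};u;p)$ of \eqref{eq:deffmu}, together with its counterpart with $(u,p)$ replaced by $(v,q)$. Substituting these into \eqref{eq:defcImunu} and pulling the scalars $F_{\mu}(a_{\pr{1,\ldots,r}};u;p)$, $F_{\nu}(a_{\pr{1,\ldots,r}};v;q)$ out of the bilinear form gives
\begin{equation*}
I_{\mu,\nu}(a;u,v)=F_{\mu}(a_{\pr{1,\ldots,r}};u;p)\,F_{\nu}(a_{\pr{1,\ldots,r}};v;q)\,K_{\mu,\nu}(a)
\qquad(\mu,\nu\in Z_{r,n}),
\end{equation*}
so the matrix $I(a;u,v)$ is obtained from $K(a)$ by scaling the $\mu$th row by $F_{\mu}(a_{\pr{1,\ldots,r}};u;p)$ and the $\nu$th column by $F_{\nu}(a_{\pr{1,\ldots,r}};v;q)$. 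Taking determinants yields $\det I(a;u,v)=\big(\prod_{\mu\in Z_{r,n}}F_{\mu}(a_{\pr{1,\ldots,r}};u;p)\big)\big(\prod_{\nu\in Z_{r,n}}F_{\nu}(a_{\pr{1,\ldots,r}};v;q)\big)\det K(a)$.

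The only computation of substance is the evaluation of $\prod_{\mu\in Z_{r,n}}F_{\mu}(a_{\pr{1,\ldots,r}};u;p)$. From \eqref{eq:defFmu} we have $F_{\mu}(a_{\pr{1,\ldots,r}};u;p)=\prod_{k=1}^{r}\prod_{l=1}^{r-1}e(a_k,u_l;p)_{t,\mu_k}$, so the product over all $\mu\in Z_{r,n}$ factors as $\prod_{k=1}^{r}\prod_{l=1}^{r-1}\prod_{\mu\in Z_{r,n}}e(a_k,u_l;p)_{t,\mu_k}$, and for fixed $k$ the inner product depends only on the multiplicity $N_j$ of the value $j$ among the $k$th components, i.e.\ $N_j=\#\{\mu\in Z_{r,n}:\mu_k=j\}$. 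Counting the remaining $r-1$ coordinates summing to $n-j$ gives $N_j=\binom{n-j+r-2}{r-2}$ for $0\le j\le n$; since $e(a_k,u_l;p)_{t,0}=1$ the term $j=0$ is trivial, and one finds $\prod_{\mu\in Z_{r,n}}e(a_k,u_l;p)_{t,\mu_k}=\prod_{j=1}^{n}e(a_k,u_l;p)_{t,j}^{\binom{n-j+r-2}{r-2}}$. Hence $\prod_{\mu\in Z_{r,n}}F_{\mu}(a_{\pr{1,\ldots,r}};u;p)=\prod_{i=1}^{n}\prod_{k=1}^{r}\prod_{l=1}^{r-1}e(a_k,u_l;p)_{t,i}^{\binom{n-i+r-2}{r-2}}$, and likewise with $(u,p)$ replaced by $(v,q)$.

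Substituting these into the formula for $\det I(a;u,v)$ and comparing with Theorem \ref{thm:detcI}, the factor $\prod_{i=1}^{n}\prod_{k=1}^{r}\prod_{l=1}^{r-1}\big(e(a_k,u_l;p)_{t,i}e(a_k,v_l;q)_{t,i}\big)^{\binom{n-i+r-2}{r-2}}$ occurring in the numerator there is exactly the prefactor relating $\det I$ to $\det K$; dividing it off leaves precisely the asserted expression for $\det K(a)$, with the same constant $c_{r,n}$. I expect no real obstacle: the argument is pure bookkeeping, the one nontrivial ingredient being the hockey-stick sum used to identify the multiplicities $N_j$, and everything else is inherited from Theorem \ref{thm:detcI}. (The analytic hypotheses $|p|<|t|^{2n-2}$ and $|q|<|t|^{2n-2}$ of Theorem \ref{thm:detcI} are understood to be in force; as already noted in the text, $K_{\mu,\nu}(a)$ and both sides of the claimed identity are meromorphic on the hypersurface $a_1\cdots a_mt^{2n-2}=pq$, so the corollary simply records the resulting equality of meromorphic functions.)
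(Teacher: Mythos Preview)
Your argument is correct and follows the same route as the paper: you use the factorization $I_{\mu,\nu}(a;u,v)=F_{\mu}(a_{\{1,\ldots,r\}};u;p)\,F_{\nu}(a_{\{1,\ldots,r\}};v;q)\,K_{\mu,\nu}(a)$, take determinants, evaluate $\prod_{\mu\in Z_{r,n}}F_{\mu}$ via the count $\#\{\mu\in Z_{r,n}:\mu_k=j\}=\binom{n-j+r-2}{r-2}$, and divide out of Theorem~\ref{thm:detcI}. This is exactly the paper's derivation, only with the combinatorial count spelled out in slightly more detail.
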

\par
As in Theorem \ref{thm:1A} 
we now consider the integrals 
\begin{equation}
\label{eq:Kmunu(a;x,y)}
K_{\mu,\nu}(a;x,y)=\la
E_{\mu}(x;z;p), E_{\nu}(y;z;q)
\ra_{\Phi}
\quad(\mu,\nu\in Z_{r,n})
\end{equation}
defined by the interpolation bases for 
$\cH_{r-1,n}^{(p)}$, $\cH_{r-1,n}^{(q)}$ 
with respect to generic parameters 
$x=(x_1,\ldots,x_r)$, $y=(y_1,\ldots,y_r)$. 
Note that 
\begin{equation}
\label{eq:EmuEnu}
\begin{split}
E_{\mu}(x;z;p)&=\sum_{\alpha\in Z_{r,n}}
E_{\mu}(x;(a_{\pr{1,\ldots,r}})_{t,\alpha};p)
E_{\alpha}(a_{\pr{1,\ldots,r}};z;p), 
\\
E_{\nu}(y;z;q)&=\sum_{\beta\in Z_{r,n}}
E_{\nu}(y;(a_{\pr{1,\ldots,r}})_{t,\beta};q)
E_{\beta}(a_{\pr{1,\ldots,r}};z;q)
\end{split}
\end{equation}
by the property of interpolation functions. 
Also, by \cite[Theorem 4.1]{INSlaterBC} 
the determinants of these transition matrices are given by 
\begin{equation*}
\begin{split}
\det \big(E_{\mu}(x;(a_{\pr{1,\ldots,r}})_{t,\nu};p)\big)_{\mu,\nu\in Z_{r,n}}
&=
\prod_{0\le i+j<n}\prod_{1\le k<l\le r}
\left(
\frac{e(t^ia_k,t^ja_l;p)}
{e(t^ix_k,t^jx_l;p)}
\right)
^{\binom{n-i-j+r-3}{r-2}}, 
\\
\det \big(E_{\mu}(y;(a_{\pr{1,\ldots,r}})_{t,\nu};q)\big)_{\mu,\nu\in Z_{r,n}}
&=
\prod_{0\le i+j<n}\prod_{1\le k<l\le r}
\left(
\frac{e(t^ia_k,t^ja_l;q)}
{e(t^iy_k,t^jy_l;q)}
\right)
^{\binom{n-i-j+r-3}{r-2}}. 
\end{split}
\end{equation*}
Hence the determinant of the matrix 
$K(a;x,y)=\big(K_{\mu,\nu}(a;x,y)\big)_{\mu,\nu\in Z_{r,n}}$
is computed as 
\begin{equation*}
\det K(a;x;y)=\det K(a) 
\prod_{0\le i+j<n}\prod_{1\le k<l\le r}
\left(
\frac{e(t^ia_k,t^ja_l;p)}
{e(t^ix_k,t^jx_l;p)}
\frac{e(t^ia_k,t^ja_l;q)}
{e(t^iy_k,t^jy_l;q)}
\right)
^{\binom{n-i-j+r-3}{r-2}}. 
\end{equation*}
Then, by Corollary \ref{cor:detK(a)} we obtain the following
expression for $\det K(a;x,y)$.
\begin{cor} 
Under the condition 
$t^{2n-2}a_1\cdots a_m=pq$ with $m=2r+4$, we have
\label{cor:det K(axy)}
\begin{equation*}
\det K(a;x,y)=c_{r,n}
\frac
{
\prod_{i=0}^{n-1}
\prod_{1\le k<l\le m}
\Gamma(t^ia_ka_l;p,q)^{\binom{n-i+r-2}{r-1}}
}
{\prod_{0\le i+j<n}\prod_{1\le k<l\le r}
\left(
e(t^ix_k,t^jx_l;p)
e(t^iy_k,t^jy_l;q)
\right)
^{\binom{n-i-j+r-3}{r-2}}}, 
\end{equation*}
where $c_{r,n}$ is a constant independent of the parameters $a=(a_1,\ldots,a_m)$. 
\end{cor}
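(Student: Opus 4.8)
The plan is to reduce Corollary~\ref{cor:det K(axy)} to Corollary~\ref{cor:detK(a)} by passing between interpolation bases.  Since $E_\mu(x;z;p)\in\cH^{(p)}_{r-1,n}$ and $E_\nu(y;z;q)\in\cH^{(q)}_{r-1,n}$, each of these functions expands in the interpolation basis attached to the parameters $a_{\pr{1,\ldots,r}}$ as in \eqref{eq:EmuEnu}.  Substituting those two expansions into \eqref{eq:Kmunu(a;x,y)} and using bilinearity of $\la\,,\,\ra_\Phi$, I would obtain the matrix factorization
\[
K(a;x,y)=P\,K(a)\,Q^{\mathrm{t}},
\]
where $P=\big(E_{\mu}(x;(a_{\pr{1,\ldots,r}})_{t,\alpha};p)\big)_{\mu,\alpha\in Z_{r,n}}$, $Q=\big(E_{\nu}(y;(a_{\pr{1,\ldots,r}})_{t,\beta};q)\big)_{\nu,\beta\in Z_{r,n}}$, and $K(a)=\big(K_{\mu,\nu}(a)\big)_{\mu,\nu\in Z_{r,n}}$ is the matrix of Corollary~\ref{cor:detK(a)}.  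Taking determinants gives $\det K(a;x,y)=\det P\cdot\det Q\cdot\det K(a)$.

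Next I would insert the explicit values of $\det P$ and $\det Q$.  By \cite[Theorem~4.1]{INSlaterBC} (recorded in the excerpt), one has $\det P=\prod_{0\le i+j<n}\prod_{1\le k<l\le r}\big(e(t^ia_k,t^ja_l;p)/e(t^ix_k,t^jx_l;p)\big)^{\binom{n-i-j+r-3}{r-2}}$, and $\det Q$ is the same product with $(p,x)$ replaced by $(q,y)$.  Multiplying these into the formula of Corollary~\ref{cor:detK(a)}, the factors $\prod e(t^ia_k,t^ja_l;p)^{\binom{n-i-j+r-3}{r-2}}$ and $\prod e(t^ia_k,t^ja_l;q)^{\binom{n-i-j+r-3}{r-2}}$ in the denominator of $\det K(a)$ cancel against the numerators coming from $\det P$ and $\det Q$, while the product of elliptic gamma functions and the constant $c_{r,n}$ are carried through unchanged (note that $\det P$ and $\det Q$ involve no unknown constant).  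What remains is exactly the asserted expression for $\det K(a;x,y)$, with the same constant $c_{r,n}$, independent of $a$.

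Since all the substantive work is already contained in Corollary~\ref{cor:detK(a)} --- hence in Theorem~\ref{thm:detcI} and the $q$-difference system derived in Section~\ref{section:4} --- this last step presents no genuine obstacle; the only point to watch is genericity.  One needs $x=(x_1,\ldots,x_r)$ and $y=(y_1,\ldots,y_r)$ generic enough for the interpolation bases $\{E_\mu(x;z;p)\}$, $\{E_\nu(y;z;q)\}$ to exist and for $P$, $Q$ to be invertible, and one should restrict them a little further so that the values $e(t^ix_k,t^jx_l;p)$ and $e(t^iy_k,t^jy_l;q)$ appearing in the denominator are nonzero for $0\le i+j<n$; each such condition removes only a proper analytic subset.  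Under these mild restrictions all the displayed equalities are identities of meromorphic functions in $a$ on the hypersurface $a_1\cdots a_mt^{2n-2}=pq$, and the formula holds wherever both sides are defined.
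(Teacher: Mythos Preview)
Your proposal is correct and follows essentially the same argument as the paper: expand $E_\mu(x;z;p)$ and $E_\nu(y;z;q)$ in the interpolation bases attached to $a_{\{1,\ldots,r\}}$ via \eqref{eq:EmuEnu}, obtain the factorization $K(a;x,y)=P\,K(a)\,Q^{\mathrm t}$, invoke \cite[Theorem~4.1]{INSlaterBC} for $\det P$ and $\det Q$, and cancel against the denominator of Corollary~\ref{cor:detK(a)}. Your added remark on genericity of $x,y$ is a reasonable clarification but does not change the substance of the argument.
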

\begin{rem}{\rm 
We compute the constant $c_{r,n}$ later in Section \ref{section:5}, and eventually see that 
\begin{equation}
\label{eq:explicitcrn}
c_{r,n}=
\left(
\frac{2^n n!}
{(p;p)_{\infty}^n (q;q)_\infty^n}
\right)^{\binom{n+r-1}{r-1}}
\prod_{i=0}^{n-1}
\left(\frac{\Gamma(t^{i+1};p,q)}
{\Gamma(t;p,q)}\right)^{r\binom{n-i+r-2}{r-1}}.
\end{equation}
As a consequence, Corollary \ref{cor:det K(axy)} with the explicit formula \eqref{eq:explicitcrn} of $c_{r,n}$ 
implies Theorem \ref{thm:1A}.
Once the constant $c_{r,n}$ has been determined, 
we see that 
Theorem \ref{thm:detcI} and its corollaries 
are valid for $|p|<1$ and $|q|<1$
without any particular restriction.
}
\end{rem}
\par
The system of $q$- and $p$-difference equations for the matrix $K(a;x,y)$ as stated in Theorem \ref{thm:1B}
can be derived from the system \eqref{eq:TTI=AI} 
for $I(a;u,v)$. 
From \eqref{eq:deffmu}  and \eqref{eq:EmuEnu} the transition between $E_\mu(x;z;p)$ and $f_\alpha(z;u;p)$ is given by 
\begin{equation*}
E_{\mu}(x;z;p)=\sum_{\nu\in Z_{r,n}}
G_{\mu\nu}(a;x,u;p)
f_{\nu}(z;u;p),
\end{equation*}
where 
\begin{equation*}
G_{\mu\nu}(a;x,u;p)=\frac{E_{\mu}(x;(a_{\pr{1,\ldots,r}})_{t,\nu};p)}
{F_\nu(a_{\pr{1,\ldots,r}};u;p)}.
\end{equation*}
From \eqref{eq:defcImunu} and \eqref{eq:Kmunu(a;x,y)} we have 
\begin{equation*}
K(a;x,y)=G(a;x,u;p)I(a;u,v)G(a;y,v;q)^{\mathrm{t}}, 
\end{equation*}
where 
\begin{equation*}
G(a;x,u;p)=(G_{\mu\nu}(a;x,u;p))_{\mu,\nu\in Z_{r,n}}.
\end{equation*}
Since $G(a;y,v;q)$ is invariant under the $q$-shifts in $a$ parameters, by \eqref{eq:TTI=AI}, for $k=1,\ldots,m-1$, we have
\begin{equation*}
T_{q,a_k}T_{q,a_m}^{-1}K(a;x,y)=\big(T_{q,a_k}T_{q,a_m}^{-1}G(a;x,u;p) \big)A^{k,m}(a;u;p,q)I(a;u,v)
G(a;y,v;q)^{\mathrm{t}}, 
\end{equation*}
and hence, 
\begin{equation}
\label{eq:TTK=MK}
T_{q,a_k}T_{q,a_m}^{-1}K(a;x,y)=M^{k,m}(a;x;p,q)K(a;x,y),
\end{equation}
where 
\begin{equation}
\label{eq:Mkm(a;x;p,q)}
M^{k,m}(a;x;p,q)=\big(T_{q,a_k}T_{q,a_m}^{-1}G(a;x,u;p) \big)A^{k,m}(a;u;p,q)
G(a;x,u;p)^{-1}. 
\end{equation}
Note that these matrices are actually independent of $u$ as can be seen from \eqref{eq:TTK=MK}, provided $\det K(a;x,y)\ne 0$. 
Since $K(a;x,y)$ is invariant under the permutation of $a_1,\ldots,a_m$,
the $q$-difference equations \eqref{eq:1Bq} are obtained from \eqref{eq:Mkm(a;x;p,q)} by symmetry. 
Also, the $p$-difference equations \eqref{eq:1Bp} follow from the symmetry of $K(a;x,y)$ with respect to $p$ and $q$.
This completes the proof of Theorem \ref{thm:1B} under the assumption $c_{r,n}\ne 0$. 
\section{\boldmath Computation of the constants $c_{r,n}$} 
\label{section:5}
\subsection{Determinant of the bilinear form}
In this section, 
we use the notation $K^{(r,n)}_{\mu,\nu}(a)$ and 
$\Phi_n(z;a)$ for 
$K_{\mu,\nu}(a)$ and $\Phi(z)=\Phi(z;a;p,q)$ 
respectively, 
in order to make explicit the dependence on 
$(r,n)$ and $a=(a_1,\ldots,a_m)$. 
Namely, 
\begin{equation*}
\begin{split}
K^{(r,n)}_{\mu,\nu}(a)
&=\la E_{\mu}(a_{\pr{1,\ldots,r}};z;p),E_{\nu}(a_{\pr{1,\ldots,r}};z;q)
\ra_{\Phi}
\\
&=\int_{\bT^n}
E_{\mu}(a_{\pr{1,\ldots,r}};z;p),E_{\nu}(a_{\pr{1,\ldots,r}};z;q)
\Phi_n(z;a)\omega_n(z)
\qquad(\mu,\nu\in Z_{r,n}). 
\end{split}
\end{equation*}
Under the conditions $|p|<|t|^{2n-2}$, $|q|<|t|^{2n-2}$ and 
$t^{2n-2}a_1\cdots a_m=pq$ $(m=2r+4)$, 
by Corollary \ref{cor:detK(a)}
the determinant of the matrix 
$K^{(r,n)}(a)=\big(K^{(r,n)}_{\mu,\nu}(a)\big)_{\mu,\nu\in Z_{r,n}}$ 
is expressed as 
\begin{equation*}
\det K^{(r,n)}(a)=
c_{r,n}\,L_{r,n}(a),
\end{equation*}
with a constant $c_{r,n}$ which does not depend on $a$, 
where 
\begin{equation*}
L_{r,n}(a)=
\ds
\frac{
\prod_{i=0}^{n-1}
\prod_{1\le k<l\le m}
\Gamma(t^ia_ka_l;p,q)^{\binom{n-i+r-2}{r-1}}
}{
\prod_{0\le i+j<n}
\prod_{1\le k<l\le r}
\big(
e(t^ia_k,t^ja_l;p)
e(t^ia_k,t^ja_l;q)
\big)^{\binom{n-i-j+r-3}{r-2}}
}. 
\end{equation*}
In the following, we determine the unknown constant 
$c_{r,n}$ by comparing the asymptotic behavior of the 
two meromorphic functions 
$\det K^{(r,n)}(a)$ and $L_{r,n}(a)$ 
around their poles. 
\subsection{Asymptotic behavior of $L_{r,n}(a)$}
\label{subsection:5.2}
Among the parameters $a_1,\ldots,a_m$ ($m=2r+4$), 
we choose two parameters $a_1$ and $a_{r+1}$ 
and analyze the singularity of $L_{r,n}(a)$ 
along the pole 
$1-a_1a_{r+1}=0$. 
Since $L_{r,n}(a)$ has the factor 
\begin{equation*}
\Gamma(a_1a_{r+1};p,q)^{\binom{n+r-2}{r-1}}
=
\left(
\frac{(pq/a_1a_{r+1};p,q)_\infty}{(a_1a_{r+1};p,q)_\infty}
\right)^{\binom{n+r-2}{r-1}}, 
\end{equation*}
it has a pole 
of multiplicity $\binom{n+r-2}{r-1}$  
along the hypersurface $1-a_1a_{r+1}=0$. 
We compute the limit 
\begin{equation}\label{eq:wL}
\widetilde{L}_{r,n}(\widetilde{a})=
\lim_{a_{r+1}\to a_1^{-1}}(1-a_1a_{r+1})^{\binom{n+r-2}{r-1}}
L_{r,n}(a)
\end{equation}
as $a_{r+1}\to a_1^{-1}$. 
In this procedure, we regard 
$(a_1,\ldots,a_{m-1})$ as independent variables and 
$a_m=pq/t^{2n-2}a_1\cdots a_{m-1}$ as a function of 
$(a_1,\ldots,a_{m-1})$.  
Note that as $a_{r+1}\to a_1^{-1}$, $a_m$ has the limit 
\begin{equation*}
\widetilde{a}_m=
\lim_{a_{r+1}\to a_1^{-1}}a_m=pq/t^{2n-2}a_2\cdots 
a_ra_{r+2}\cdots a_{m-1}. 
\end{equation*}
Also, in the notation 
$\widetilde{L}_{r,n}(\widetilde{a})$, 
$\widetilde{a}$ stands for 
\begin{equation*}
\widetilde{a}=(a_1,\ldots,a_r,a_{r+2},\ldots,
a_{m-1},\widetilde{a}_m). 
\end{equation*}
The limit 
$\widetilde{L}_{r,n}(\widetilde{a})$ is computed explicitly 
as follows.  We first rewrite $L_{r,n}(a)$ as 
\begin{equation*}
\begin{split}
L_{r,n}(a)
&=
\Gamma(a_1a_{r+1};p,q)^{\binom{n+r-2}{r-1}}
\prod_{i=1}^{n-1}
\Gamma(t^ia_1a_{r+1};p,q)^{\binom{n-i+r-2}{r-1}}
\\
&\quad\cdot
\prod_{i=0}^{n-1}
\prod_{\substack{1\le k\le m\\ k\ne 1,r+1}}
\Gamma(t^{i}a_1a_k,t^{i}a_{r+1}a_k;p,q)^{\binom{n-i+r-2}{r-1}}
\\
&\quad\cdot
\frac{
\prod_{i=0}^{n-1}
\prod_{\substack{1\le k<l\le m\\ k,l\ne 1,r+1}}
\Gamma(t^ia_ka_l;p,q)^{\binom{n-i+r-2}{r-1}}
}{
\prod_{0\le i+j<n}
\prod_{1\le k<l\le r}
\big(
e(t^ia_k,t^ja_l;p)
e(t^ia_k,t^ja_l;q)
\big)^{\binom{n-i-j+r-3}{r-2}}
}.  
\end{split}
\end{equation*}
Since
\begin{equation*}
\begin{split}
&\lim_{a_{r+1}\to a_1^{-1}}(1-a_1a_{r+1})
\Gamma(a_1a_{r+1};p,q)
\\
&
=
\lim_{a_{r+1}\to a_1}\frac{
(pq/a_1a_{r+1};pq)_\infty}
{
(pa_1a_{r+1};p)_\infty
(qa_1a_{r+1};q)_\infty
(pqa_1a_{r+1};pq)_\infty
}
\\
&=\frac{1}{(p;p)_\infty(q;q)_\infty}, 
\end{split}
\end{equation*}
we obtain 
\begin{equation*}
\begin{split}
\widetilde{L}_{r,n}(\widetilde{a})
&=\ds
\frac{
\prod_{i=1}^{n-1}
\Gamma(t^i;p,q)^{\binom{n-i+r-2}{r-1}}
}{
\left((p;p)_\infty(q;q)_\infty\right)^{\binom{n+r-2}{r-1}}
}
\,
\prod_{i=0}^{n-1}
\prod_{\substack{1\le k\le m\\ k\ne 1,r+1}}
\Gamma(t^{i}a_1^{\pm1}a_k;p,q)^{\binom{n-i+r-2}{r-1}}
\\
&\quad\cdot
\frac{
\prod_{i=0}^{n-1}
\prod_{\substack{1\le k<l\le m\\ k,l\ne 1,r+1}}
\Gamma(t^ia_ka_l;p,q)^{\binom{n-i+r-2}{r-1}}
}{
\prod_{0\le i+j<n}
\prod_{1\le k<l\le r}
\big(
e(t^ia_k,t^ja_l;p)
e(t^ia_k,t^ja_l;q)
\big)^{\binom{n-i-j+r-3}{r-2}}
}. 
\end{split}
\end{equation*}
Here $a_m$ in the right-hand side should be understood as its limit 
$\widetilde{a}_m=pq/t^{2n-2}a_2\cdots a_ra_{r+2}\cdots a_{m-1}$.
\subsection{Remark on analytic continuation}
Before proceeding to asymptotic analysis of $\det K^{(r,n)}(a)$, 
we give a general remark on analytic continuation of the integral 
\begin{equation}
\label{eq:la f(z;a) ra}
\la f(z;a) \ra_{\Phi}=\int_{\bTR^n}
f(z;a)\Phi_n(z;a)\omega_n(z)
\end{equation}
for a holomorphic function $f(z;a)$ on $\bTC^n\times \bTC^m$, 
which defines a holomorphic function on the domain 
\begin{equation*}
U=\big\{a=(a_1,\ldots,a_m)\in(\mathbb{C}^\ast)^m \big\vert\ 
|a_k|<1\ (k=1,\ldots,m) \big\}\subset(\mathbb{C}^\ast)^m.   
\end{equation*}  
This function can be continued to a 
holomorphic function on a larger domain by 
replacing $\mathbb{T}^n$ with an appropriate 
$n$-cycle depending on the parameters 
$(a_1,\ldots,a_m)$.  
\par\medskip
Notice that $\Phi_n(z;a)$ has poles possibly along the divisors
\begin{equation*}
\begin{split}
z_i^{\pm 1}&=a_k p^\mu q^\nu \quad
\ (1\le i\le n;\ k=1,\ldots,m;\ 
\mu,\nu\in\mathbb{N}
),\\
z_i^{\pm1}z_j^{\pm1}&=t\,p^\mu q^{\nu}
\qquad(1\le i<j\le n;\ 
\mu,\nu\in\mathbb{N}
).
\end{split}
\end{equation*}
Also, 
regarded as a function of $z_i$ ($i=1,\ldots,n$), 
it has poles possibly at 
\begin{equation*}
p^{\mu}q^{\nu}a_k,\quad 
p^{-\mu}q^{-\nu}a_k^{-1},\quad 
p^{\mu}q^{\nu}t z_j^{\pm1},\quad
p^{-\mu}q^{-\nu}t^{-1}z_j^{\pm1},
\end{equation*}
where $1\le k\le m$, $1\le j\le n$, $j\ne i$ and 
$\mu,\nu\in\mathbb{N}$. 
In view of this fact, for each $a=(a_1,\ldots,a_m)\in(\mathbb{C}^\ast)^m$, 
we define two subsets $S_0$, $S_\infty$ 
of $\mathbb{C}^\ast$ by
\begin{equation*}
\begin{split}
S_0&=\big\{\ p^{\mu}q^{\nu}a_k\ \big\vert\  
1\le k\le m;\ 
\mu,\nu\in\mathbb{N}
\ \big\},
\\ 
S_\infty&=\big\{\ p^{-\mu}q^{-\nu}a_k^{-1}\ \big\vert\  
1\le k\le m;\ 
\mu,\nu\in\mathbb{N}
\ \big\}, 
\end{split}
\end{equation*}
and suppose that $S_0\cap S_\infty=\phi$, namely 
$a_ka_l\notin p^{-\mathbb{N}}q^{-\mathbb{N}}
$ $(1\le k,l\le m)$.  
Assuming that $|t|<\rho^2$ for some $\rho\in(0,1]$, 
we choose a circle 
\begin{equation*}
C_\delta(0)=\big\{ u\in\mathbb{C}^\ast\ \big\vert\ 
|u|=\delta\ \big\},\quad \delta\in[\rho,\rho^{-1}], 
\end{equation*}
which does not intersect with $S_0\cup S_\infty$.  
Then we define a cycle $C$ in $\mathbb{C}^\ast$
by
\begin{equation*}
C=C_\delta(0)+
\sum_{c\in S_0; |c|>\delta}\,C_\varepsilon(c)-
\sum_{c\in S_\infty; |c|<\delta}\,C_\varepsilon(c),
\end{equation*}
where $C_\varepsilon(c)$ denotes a sufficiently small circle 
around $c$.  
Note that, if $|a_k|<1$ ($k=1,\ldots,m$), then $C$ 
is homologous to the unit circle. 
\par
We now assume that  $|a_k|<\rho^{-1}$ ($k=1,\ldots,m$).   
Then such a cycle $C$ can be taken inside the annulus 
$\{ u\in\mathbb{C}^\ast\ \vert\ \rho\le |u|\le \rho^{-1}\}$.  
Since $|t|<\rho^2$, the meromorphic function 
$\Phi_n(z;a)$ is holomorphic in an neighborhood of the 
$n$-cycle $C^n=C\times\cdots\times C$.  
Hence, the integral 
\begin{equation}
\label{eq:int_Cn f(z;a)..}
\int_{C^n}f(z;a)\Phi_n(z;a)\omega_n(z)
\end{equation}
is well defined, and does not depend on the choice 
of $\delta\in [\rho,\rho^{-1}]$.  This implies the following 
lemma on analytic continuation.
\begin{lem}\label{lem:analcont}
Suppose that $|t|<\rho^2$ for some 
$\rho\in(0,1]$. Then the holomorphic function 
\eqref{eq:la f(z;a) ra} 
on the domain $U$ can be continued by the integral \eqref{eq:int_Cn f(z;a)..}
to a holomorphic function on 
\begin{equation}\label{eq:defUext}
\big\{a=(a_1,\ldots,a_m)\in(\mathbb{C}^\ast)^m
\big\vert\ |a_k|<\rho^{-1}\ (1\le k\le m),\ 
a_ka_l\notin p^{-\mathbb{N}}q^{-\mathbb{N}}
\ (1\le k,l\le m) 
\ \big\}.
\end{equation}
\hfill$\square$
\end{lem}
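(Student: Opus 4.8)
The plan is to produce an explicit integral representation that agrees with $\la f(z;a)\ra_\Phi$ on $U$ and makes sense on the larger domain \eqref{eq:defUext}. First I would verify that for $a$ in the larger set the hypothesis $S_0\cap S_\infty=\phi$ is satisfied: the intersection would force $p^\mu q^\nu a_k = p^{-\mu'}q^{-\nu'}a_l^{-1}$ for some $k,l$ and $\mu,\nu,\mu',\nu'\in\bN$, i.e.\ $a_ka_l\in p^{-\bN}q^{-\bN}$, which is exactly what is excluded. So the sets $S_0$, $S_\infty$ of potential poles of $z_i\mapsto\Phi_n(z;a)$ (listed just before the statement) are disjoint, and one can genuinely separate ``poles coming from inside'' from ``poles coming from outside''. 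Fix $\rho\in(0,1]$ with $|t|<\rho^2$. The key geometric point is that since $S_0$ accumulates only at $0$ and $S_\infty$ only at $\infty$, the annulus $\{\rho\le|u|\le\rho^{-1}\}$ contains only finitely many points of $S_0\cup S_\infty$; hence for all but finitely many $\delta\in[\rho,\rho^{-1}]$ the circle $C_\delta(0)$ avoids $S_0\cup S_\infty$, and for such $\delta$ the cycle $C=C_\delta(0)+\sum_{c\in S_0,|c|>\delta}C_\vep(c)-\sum_{c\in S_\infty,|c|<\delta}C_\vep(c)$ is well defined.

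Next I would check the two substantive claims about $C$. \emph{(i) Independence of $\delta$:} moving $\delta$ across one point $c\in S_0$ (say $|c|$ crosses $\delta$) changes $C_\delta(0)$ by the residue circle $\pm C_\vep(c)$, which is exactly cancelled by the corresponding change in the sum over $S_0$; similarly for $S_\infty$. So $\int_{C^n}f(z;a)\Phi_n(z;a)\omega_n(z)$ — viewed as an integral in $z_1$ first, then $z_2$, etc., with each $z_j$-contour equal to $C$ for the appropriate value of $\delta$ — is unchanged. Here one uses that $f$ is entire on $\bTC^n\times\bTC^m$, so it contributes no poles, and that the relevant residues at $z_i\in S_0\cup S_\infty$ are the only singularities encountered. \emph{(ii) Agreement with $\la f\ra_\Phi$ when $|a_k|<1$:} in that regime all points of $S_0$ have modulus $<1$ (taking $\delta=1$, or the closest admissible value) and all points of $S_\infty$ have modulus $>1$, so the residue sums are empty and $C$ reduces to the unit circle; iterating over $z_1,\dots,z_n$ shows $\int_{C^n}=\int_{\bTR^n}$, i.e.\ the new integral extends \eqref{eq:la f(z;a) ra}.

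Finally, holomorphy on the domain \eqref{eq:defUext}: since $|a_k|<\rho^{-1}$ and $|t|<\rho^2$, one can choose $\delta\in[\rho,\rho^{-1}]$ so that $C$ lies in the annulus $\{\rho\le|u|\le\rho^{-1}\}$, and on a neighbourhood of the compact cycle $C^n$ the integrand $f(z;a)\Phi_n(z;a)$ is holomorphic in $(z,a)$ jointly (the excluded divisors of $\Phi_n$ are, by construction of $C$ via the small circles $C_\vep(c)$, kept at positive distance from the contour, locally uniformly in $a$); differentiating under the integral sign then shows \eqref{eq:int_Cn f(z;a)..} is holomorphic in $a$ on \eqref{eq:defUext}. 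The main obstacle is the bookkeeping in step~(i): one must make precise that as $a$ varies the finite configuration of residue circles varies continuously and that the iterated-contour integral is genuinely well defined and deformation-invariant — i.e.\ that picking up residues in the $z_i$-variable does not interfere with the contours in the other variables. This is handled by treating the $z_i$ one at a time and noting that the $z_j$-contours ($j\ne i$) can be kept fixed on $C_\delta(0)$ while deforming the $z_i$-contour, using that $t^{-1}z_j^{\pm1}$-type poles of $\Phi_n$ in $z_i$ stay away from $C$ because $|t|<\rho^2\le|z_j^{\pm1}z_i^{\mp1}|^{-1}\cdot|t|$ — more simply, because $|t|<\rho^2$ and all contours lie in $\{\rho\le|u|\le\rho^{-1}\}$. $\hfill\square$
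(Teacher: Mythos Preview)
Your proposal is correct and follows exactly the approach the paper itself uses: the paper states the lemma without separate proof (it ends with $\square$ immediately), treating it as a direct consequence of the preceding discussion that introduces $S_0$, $S_\infty$, the cycle $C$, and the observation that $|t|<\rho^2$ keeps the $t$-type poles $z_i=p^{\pm\mu}q^{\pm\nu}t^{\pm1}z_j^{\pm1}$ away from $C^n$ when $C$ lies in the annulus $\rho\le|u|\le\rho^{-1}$. You have simply spelled out in more detail the three points the paper leaves implicit: why $S_0\cap S_\infty=\phi$ is equivalent to $a_ka_l\notin p^{-\bN}q^{-\bN}$, why the integral is independent of $\delta$, and why it agrees with $\la f\ra_\Phi$ on $U$.
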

We remark that, when $f(z;a)$ depends meromorphically on $a$, 
the integral \eqref{eq:la f(z;a) ra} is continued similarly to a meromorphic function on the domain \eqref{eq:defUext}.
\subsection{Asymptotic behavior of $\det K^{(r,n)}(a)$}
Applying the same procedure as in Subsection \ref{subsection:5.2}
to $\det K^{(r,n)}(a)$, 
we compute the limit 
\begin{equation}\label{eq:limKrn}
\widetilde{K}_{r,n}(\widetilde{a})=
\lim_{a_{r+1}\to a_1^{-1}}(1-a_1a_{r+1})^{\binom{n+r-2}{r-1}}
\det 
K^{(r,n)}(a). 
\end{equation}
Note here that the power $\binom{n+r-2}{r-1}$ is the cardinality 
of $Z_{r,n-1}$. 
The indexing set $Z_{r,n}$ for the matrix $K^{(r,n)}(a)$ 
is divided into two parts as 
\begin{equation*}
Z_{r,n}=Z_{r,n}^{0}\sqcup Z_{r,n}^{+};\quad
Z_{r,n}^{0}=\prm{\mu\in Z_{r,n}}{\mu_1=0},
\ \ 
Z_{r,n}^{+}=\prm{\mu\in Z_{r,n}}{\mu_1>0}, 
\end{equation*} 
according 
as $\mu_1=0$ or $\mu_{1}>0$. 
Since 
$Z_{r-1,n}\isom Z_{r,n}^{0}$ and 
$Z_{r,n-1}\isom Z_{r,n}^{+}=Z_{r,n-1}+\ep_1$, 
\begin{equation*}
\ts
\#Z_{r,n}^0=\binom{n+r-2}{r-2},\quad 
\#Z_{r,n}^+=\binom{n+r-2}{r-1}. 
\end{equation*}
and 
the above decomposition of $Z_{r,n}$ 
corresponds the identity 
$\binom{n+r-1}{r-1}=
\binom{n+r-2}{r-2}+\binom{n+r-2}{r-1}$
of binomial coefficients. 
\par
In order to compute the limit \eqref{eq:limKrn}, 
we analyze the asymptotic behavior of each 
matrix element $K^{(r,n)}_{\mu,\nu}(a)$ 
along the hypersurface $1-a_1a_{r+1}=0$, 
by the same method of pinching 
as we used in \cite{INIntBC}. 
As we remarked in Lemma \ref{lem:analcont}, 
in the region \eqref{eq:defUext} the integral $K^{(r,n)}_{\mu,\nu}(a)$ is expressed as 
\begin{equation*}
K^{(r,n)}_{\mu,\nu}(a)=\int_{C^n}
E_{\mu}(a_{\pr{1,\ldots,r}};z;p)
E_{\nu}(a_{\pr{1,\ldots,r}};z;q)\Phi_n(z;a)\omega_n(z)
\end{equation*}
over a certain $n$-cycle $C^n$, provided that $|t|<\rho^2$. 
Assuming that $\rho\in (0,1)$ satisfies 
\begin{equation}
\label{eq:region-a}
|p|<\rho,\quad |q|<\rho,\quad |pq/t^{2n-2}|<\rho^{m-2},
\end{equation}
we consider the situation where 
\begin{equation*}
1<|a_1|<\rho^{-1};\quad \rho<|a_k|<1\ \ (k=2,\ldots,m-1). 
\end{equation*}
In this case we can choose the cycle $C$ as 
\begin{equation*}
C=C_0+C_\varepsilon(a_1)-C_\varepsilon(a_1^{-1}); \quad C_0=C_1(0),
\end{equation*}
with sufficiently small $\varepsilon>0$,  
and analyze the effect of pinching about the cycles $C_\varepsilon(a_1)$, $C_\varepsilon(a_1^{-1})$ 
as $a_{r+1}\to a_1^{-1}$
in the region \eqref{eq:region-a}.
\par
We first consider the integral with respect to the variable $z_1$.  
When 
$a_{r+1}$ approaches to $a_1^{-1}$, 
the contour $C$ is pinched by the two pairs 
of poles $(a_{r+1},a_1^{-1})$ and 
$(a_{r+1}^{-1},a_1)$. 
Taking this into account we decompose 
$\Phi_n(z;a)$ as 
\begin{equation*}
\Phi_n(z;a)
=
\frac{
\Gamma(a_1z_1^{\pm1};p,q)
\prod_{k=2}^{m}\Gamma(a_kz_1^{\pm1};p,q)}
{\Gamma(z_1^{\pm2};p,q)}
\prod_{j=2}^{n}
\frac
{\Gamma(tz_1^{\pm1}z_j^{\pm1};p,q)}
{\Gamma(z_1^{\pm1}z_j^{\pm1};p,q)}
\,\Phi_{n-1}(z_{\widehat{1}};a),
\end{equation*}
where $z_{\widehat{1}}=(z_2,\ldots,z_n)$, and compute the residues 
at the poles $z_1=a_1^{\pm1}$. 
Then we obtain 
\begin{equation*}
\underset{z_1=a_1^{\ep}}
{\mathrm{Res}\ }\Big(\Phi_n(z;a)
\frac{dz_1}{z_1}
\Big)
=\ep 
\frac{\prod_{k=2}^{m}\Gamma(a_1^{\pm1}a_k;p,q)
}{(p;p)_\infty(q;q)_\infty\Gamma(a_1^{-2};p,q)}
\,\widehat{\Phi}_{n-1}(z_{\widehat{1}};a)
\end{equation*}
for 
$\ep=\pm1$, where 
\begin{equation*}
\widehat{\Phi}_{n-1}(z_{\widehat{1}};a)=
\prod_{j=2}^{n}
\frac{\Gamma(ta_1^{\pm}z_j^{\pm1};p,q)}
{\Gamma(a_1^{\pm}z_j^{\pm1};p,q)}
\,\Phi_{n-1}(z_{\widehat{1}};a). 
\end{equation*}
Setting 
\begin{equation*}
\Psi_{\mu,\nu}(z;a)=E_{\mu}(a_{\pr{1,\ldots,r}};z;p)
E_{\nu}(a_{\pr{1,\ldots,r}};z;q)\Phi_n(z;a)
\qquad(\mu,\nu\in Z_{r,n}), 
\end{equation*}
we compute
\begin{equation*}
\begin{split}
&
\frac{1}{2\pi\sqrt{-1}}
\int_{C}\Psi_{\mu,\nu}(z;a)\frac{dz_1}{z_1}
=\frac{1}{2\pi\sqrt{-1}}
\int_{C_0}\Psi_{\mu,\nu}(z;a)\frac{dz_1}{z_1}
\\
&\qquad\quad+\frac{2\prod_{k=2}^{m}\Gamma(a_1^{\pm1}a_k;p,q)
}{(p;p)_\infty(q;q)_\infty\Gamma(a_1^{-2};p,q)}
E_{\mu}(a_{\pr{1,\ldots,r}};a_1,z_{\widehat{1}};p)
E_{\mu}(a_{\pr{1,\ldots,r}};a_1,z_{\widehat{1}};q)
\widehat{\Phi}_{n-1}(z_{\widehat{1}};a).
\end{split}
\end{equation*}
By the same argument as in 
\cite{INIntBC}, 
we repeat this computation for  
$z_2,\ldots,z_n$.  
As a result we obtain 
\begin{equation}\label{eq:Kmunu=}
\begin{split}
&K^{(r,n)}_{\mu,\nu}(a)=
\int_{C^n}\Psi_{\mu,\nu}(z;a)\omega_n(z)
\\
&=\int_{C_0^n}\Psi_{\mu,\nu}(z;a)\omega_n(z)+\frac{2n\prod_{k=2}^{m}\Gamma(a_1^{\pm1}a_k;p,q)
}{(p;p)_\infty(q;q)_\infty\Gamma(a_1^{-2};p,q)}
\\
&\qquad\qquad\qquad\quad
\cdot
\int_{C^{n-1}}
E_{\mu}(a_{\pr{1,\ldots,r}};a_1,z_{\widehat{1}};p)
E_{\nu}(a_{\pr{1,\ldots,r}};a_1,z_{\widehat{1}};q)
\widehat{\Phi}_{n-1}(z_{\widehat{1}};a)
\omega_{n-1}(z_{\widehat{1}}).
\end{split}
\end{equation}
We remark that the first 
term of the right-hand side 
is regular along $1-a_1a_{r+1}=0$, and 
has a finite limit 
in the limit 
as $a_{r+1}\to a_1^{-1}$. 
\par
If $\mu_1=0$ or $\nu_1=0$, then the first term 
of the right-hand side of \eqref{eq:Kmunu=} is 0. 
In fact, when 
$\mu_1=0$, we have 
\begin{equation}\label{eq:Emu0}
E_{\mu}(a_{\pr{1,\ldots,r}};z;p)=\frac{\prod_{i=1}^{n}e(z_i,a_1;p)}
{\prod_{k=2}^{r}e(a_k,a_1;p)_{t,\mu_k}}
E_{(\mu_2,\ldots,\mu_r)}(a_{\pr{2,\ldots,r}};z;p), 
\end{equation}
and hence $E_{\mu}(a_{\pr{1,\ldots,r}};a_1,z_{\widehat{1}};p)=0$. 
Similarly, when $\nu_1=0$, 
we have 
$E_{\nu}(a_{\pr{1,\ldots,r}};a_1,z_{\widehat{1}};q)=0$.
Therefore, when 
$\mu_1=0$ or $\nu_1=0$, we obtain
\begin{equation*}
K^{(r,n)}_{\mu,\nu}(a)=
\int_{C^n}\Psi_{\mu,\nu}(z;a)\omega_n(z)
=
\int_{C_0^n}\Psi_{\mu,\nu}(z;a)\omega_n(z). 
\end{equation*}
Since the integral over 
$C_0^{n}$ is regular along
$1-a_1a_{r+1}=0$, we obtain 
\begin{equation*}
\begin{split}
&\lim_{a_{r+1}\to a_1^{-1}}(1-a_1a_{r+1})\int_{C^n}
K^{(r,n)}_{\mu,\nu}(a)=0,
\\
&
\lim_{a_{r+1}\to a_1^{-1}}
K^{(r,n)}_{\mu,\nu}(a)
=\int_{C_0^n}
\Psi_{\mu,\nu}(z;a)\Big|_{a_{r+1}=a_1^{-1}}\omega_n(z)
=K^{(r,n)}_{\mu,\nu}(a)\Big|_{a_{r+1}=a_1^{-1}}. 
\end{split}
\end{equation*}
\par\medskip
We now decompose the matrix 
$K_{r,n}(a)$ into four blocks as 
\begin{equation}\label{eq:KK}
\begin{split}
\begin{picture}(220,50)
\put(-10,30){
$
K^{(r,n)}(a)
=
\begin{bmatrix}\ 
K^{(r,n)}_{\mu,\nu}(a)\ 
&
K^{(r,n)}_{\mu,\nu}(a)\ 
\\[8pt]
\ 
K^{(r,n)}_{\mu,\nu}(a)\ 
&
K^{(r,n)}_{\mu,\nu}(a)\ 
\end{bmatrix}$
}
\put(180,40){\small$(\mu_1=0)$}
\put(180,20){\small$(\mu_1>0)$}
\put(65,0){\small$(\nu_1=0)$}
\put(120,0){\small$(\nu_1>0)$}
\end{picture}
\end{split}
\end{equation}
according 
to the partition
$Z_{r,n}=Z_{r,n}^{0}\sqcup Z_{r,n}^{+}$ 
of the indexing set. 
Note that $\#Z_{r,n}^+=\binom{n+r-2}{r-1}$ 
and that 
$\lim_{a_{r+1}\to a_1^{-1}} 
(1-a_1a_{r+1})
K^{(r,n)}_{\mu,\nu}(a)=0$ 
when $\nu_1=0$. 
Hence we compute 
\begin{equation*}
\begin{split}
&
\lim_{a_{r+1}\to a_1^{-1}}(1-a_1a_{r+1})^{\binom{n+r-2}{r-1}}
\det K^{(r,n)}(a)
\\
&
=\lim_{a_{r+1}\to a_1^{-1}}
\det
\begin{bmatrix}\ 
K^{(r,n)}_{\mu,\nu}(a)\ 
&
K^{(r,n)}_{\mu,\nu}(a)\ 
\\[8pt]
\ 
(1-a_1a_{r+1})K^{(r,n)}_{\mu,\nu}(a)\ 
&
(1-a_1a_{r+1})K^{(r,n)}_{\mu,\nu}(a)\ 
\end{bmatrix}
\\
&=
\det
\begin{bmatrix}\ 
K^{(r,n)}_{\mu,\nu}(a)\big|_{a_{r+1}=a_1^{-1}}
\ 
&
K^{(r,n)}_{\mu,\nu}(a)\big|_{a_{r+1}=a_1^{-1}}
\\[8pt]
\ 
0
&
\ds\lim_{a_{r+1}\to a_1^{-1}}(1-a_1a_{r+1})
K^{(r,n)}_{\mu,\nu}(a)\ 
\end{bmatrix}
\\
&=
\det\big(K^{(r,n)}_{\mu,\nu}(a)
\big|_{a_{r+1}=a_1^{-1}}
\big)_{\mu,\nu\in Z_{r,n}^0}
\det\big(
\lim_{a_{r+1}\to a_1^{-1}}(1-a_1a_{r+1})
K^{(r,n)}_{\mu,\nu}(a)\big)_{\mu,\nu\in Z_{r,n}^+}. 
\end{split}
\end{equation*}
\par
When 
$\mu_1=0$, $\nu_1=0$, with the notation 
$\mu=(0,\mu')$, $\nu=(0,\nu')$ 
we have 
\begin{equation*}
\begin{split}
E_{\mu}(a_{\pr{1,\ldots,r}};z;p)&=
\frac{\prod_{i=1}^{n}e(z_i,a_1;p)}
{\prod_{k=2}^{r}e(a_k,a_1;p)_{t,\mu_k}}
E_{\mu'}(a_{\pr{2,\ldots,r}};z;p),\quad
\\
E_{\nu}(a_{\pr{1,\ldots,r}};z;q)&=
\frac{\prod_{i=1}^{n}e(z_i,a_1;q)}
{\prod_{k=2}^{r}e(a_k,a_1;q)_{t,\nu_k}}
E_{\nu'}(a_{\pr{2,\ldots,r}};z;q).
\end{split}
\end{equation*}
Since 
\begin{equation*}
\Phi_n(z;a)\big|_{a_{r+1}=a_1^{-1}}
=\frac{
\Phi_n(z;a_{\widehat{1},\widehat{r+1}})
}{\prod_{i=1}^{n}e(z_i,a_1;p)e(z_i,a_1;q)},
\quad a_{\widehat{1},\widehat{r+1}}
=(a_2,\ldots,a_r,a_{r+2},\ldots,a_m), 
\end{equation*}
we obtain
\begin{equation*}
\begin{split}
K^{(r,n)}_{\mu,\nu}(a)\big|_{a_{r+1}=a_1^{-1}}
&=
\frac{1}{\prod_{k=2}^{r}
e(a_k,a_1;p)_{t,\mu_k}e(a_k,a_1;q)_{t,\nu_k}}
\\[6pt]
&\quad\cdot
\int_{\bT^n}
E_{\mu'}(a_{\pr{2,\ldots,r}};z;p)
E_{\mu'}(a_{\pr{2,\ldots,r}};z;q)
\Phi(z;a_{\widehat{1},\widehat{r+1}})
\omega_n(z)
\\
&=
\frac{
K^{(r-1,n)}_{\mu',\nu'}(a_{\widehat{1},\widehat{r+1}})
}{\prod_{k=2}^{r}
e(a_k,a_1;p)_{t,\mu_k}e(a_k,a_1;q)_{t,\nu_k}}
\end{split}
\end{equation*}
and hence 
\begin{equation*}
\begin{split}
&
\det\big(K^{(r,n)}_{\mu,\nu}(a)\big|_{a_{r+1}
=a_1^{-1}}\big)_{\mu,\nu\in Z_{r,n}^0}
\\
&=
\frac{\det\big(K^{(r-1,n)}_{\mu,\nu}(a_{\widehat{1},\widehat{r+1}})\big|_{a_{r+1}=a_1^{-1}}\big)_{\mu,\nu\in Z_{r-1,n}}
}{
\prod_{\mu\in Z_{r,n}^0}
\prod_{k=2}^{r}
e(a_k,a_1;p)_{t,\mu_k}e(a_k,a_1;q)_{t,\mu_k}}
\\
&=
\frac{K_{r-1,n}(a_{\widehat{1},\widehat{r+1}})
}{\prod_{j=0}^n
\prod_{l=2}^{r}
\big(e(a_1,t^ja_l;p)e(a_1,t^ja_l;q)\big)^{\binom{n-j+r-3}{r-2}}
}. 
\end{split}
\end{equation*}
(In the right-hand side, 
$a_m$ should be understood as 
$a_m=pq/t^{2n-2}a_2\cdots a_r{a}_{r+2}\cdots a_{m-1}$.)
\par\medskip
We next consider the case where 
$\mu_1>0$ and $\nu_1>0$. 
From \eqref{eq:Kmunu=} we compute
\begin{equation*}
\begin{split}
&\lim_{a_{r+1}\to a_1^{-1}}(1-a_1a_{r+1})K^{(r,n)}_{\mu,\nu}(a)
\\
&=
\frac{2n\,
\prod_{1\le k\le m;\,k\ne 1,r+1}
\Gamma(a_1^{\pm1}a_k;p,q)
}{(p;p)_\infty^2(q;q)_\infty^2}
\\
&\quad\cdot
\lim_{a_{r+1}\to a_1^{-1}}
\int_{C^{n-1}}
E_{\mu}(a_{\pr{1,\ldots,r}};a_1,z_{\widehat{1}};p)
E_{\nu}(a_{\pr{1,\ldots,r}};a_1,z_{\widehat{1}};q)
\widehat{\Phi}_{n-1}(z_{\widehat{1}};a)
\omega_{n-1}(z_{\widehat{1}}). 
\end{split}
\end{equation*}
Here, by the property of interpolation functions, we have 
\begin{equation*}
\begin{split}
E_{\mu}(a_{\pr{1,\ldots,r}};a_1,z_{\widehat{1}};p)&=
E_{\mu-\ep_1}(t^{\ep_1}a_{\pr{1,\ldots,r}};z_{\widehat{1}};p),
\\ 
E_{\nu}(a_{\pr{1,\ldots,r}};a_1,z_{\widehat{1}};q)&=
E_{\nu-\ep_1}(t^{\ep_1}a_{\pr{1,\ldots,r}};z_{\widehat{1}};q).  
\end{split}
\end{equation*}
Also, noting that 
\begin{equation*}
\begin{split}
\lim_{a_{r+1}\to a_1^{-1}}\widehat{\Phi}_{n-1}(z_{\widehat{1}};a)
&=
\Phi_{n-1}(z_{\widehat{1}};a)
\big|_{a_1\to ta_1, a_{r+1}\to ta_1^{-1}}
\\
&=\Phi_{n-1}(z_{\widehat{1}};
ta_1,a_2,\ldots,a_r,ta_1^{-1},a_{r+2},\ldots)
\end{split}
\end{equation*}
and that 
$a_m=pq/t^{2n-2}a_2\cdots a_r a_{r+2}\cdots a_{m-1}
=pq/t^{2n-4}(ta_1)a_2\cdots a_r(ta_{1}^{-1})a_{r+2} \cdots a_{m-1}$, 
we compute
\begin{equation*}
\begin{split}
&\lim_{a_{r+1}\to a_1^{-1}}(1-a_1a_{r+1})K^{(r,n)}_{\mu,\nu}(a)
\\
&=
\frac{2n\,
\prod_{1\le k\le m;\,k\ne 1,r+1}
\Gamma(a_1^{\pm1}a_k;p,q)
}{(p;p)_\infty^2(q;q)_\infty^2}
K^{(r,n-1)}_{\mu-\ep_1,\nu-\ep_1}(a)
\big|_{a_1\to ta_1, a_{r+1}\to ta_1^{-1}}. 
\end{split}
\end{equation*}
Passing to the determinant, we obtain
\begin{equation*}
\begin{split}
&\det\Big
(\lim_{a_{r+1}\to a_1^{-1}}(1-a_1a_{r+1})K^{(r,n)}_{\mu,\nu}(a)
\Big)_{\mu,\nu\in Z_{r,n}^+}
\\
&=
\bigg(\frac{2n\,
\prod_{1\le k\le m;\,k\ne 1,r+1}
\Gamma(a_1^{\pm1}a_k;p,q)
}{(p;p)_\infty^2(q;q)_\infty^2}\bigg)^{\binom{n+r-2}{r-1}}
\det K^{(r,n-1)}(a)\big|_{a_1\to ta_1, a_{r+1}\to ta_1^{-1}}. 
\end{split}
\end{equation*}
\par\medskip
Summarizing the arguments above, we obtain
\begin{equation}\label{eq:wKrec}
\begin{split}
\widetilde{K}_{r,n}(\widetilde{a})&=
\lim_{a_{r+1}\to a_1^{-1}}(1-a_1a_{r+1})^{\binom{n+r-2}{r-1}}
\det K^{(r,n)}(a)
\\
&=
\bigg(
\frac{2n}
{(p;p)_\infty^2(q;q)_\infty^2}\bigg)^{\binom{n+r-2}{r-1}}
\frac{\prod_{1\le k\le m;\,k\ne 1,r+1}
\Gamma(a_1^{\pm1}a_k;p,q)^{\binom{n+r-2}{r-1}}
}{\prod_{j=0}^{n-1}
\prod_{l=2}^{r}
\big(e(a_1,t^ja_l;p)e(a_1,t^j;q)\big)^{\binom{n-j+r-3}{r-2}}}
\\
&\quad\cdot
\det
K^{(r-1,n)}(a_{\widehat{1},\widehat{r+1}})\, 
\det K^{(r,n-1)}(a)
\big|_{a_1\to ta_1, a_{r+1}\to ta_1^{-1}}
\qquad(n\ge 1).  
\end{split}
\end{equation}
For $n=1$ we understand 
$\det K^{(r,0)}(a)=1$. 
This computation is valid also for $r=1$: 
\begin{equation*}
\widetilde{K}_{1,n}(\widetilde{a})=
\frac{2n\prod_{k=3}^{6}\Gamma(a_1^{\pm1}a_k;p,q)}{(p;p)_\infty^2(q;q)_\infty^2}
\det K^{(1,n-1)}(ta_1,ta_1^{-1},a_3,a_4,a_5,a_6)\quad(n\ge 1). 
\end{equation*}
Here we understand  $\det K^{(1,0)}(a)=1$.
\subsection{Determination of $c_{r,n}$}
In order to compare 
$\widetilde{K}_{r,n}(\widetilde{a})$ 
with 
$\widetilde{L}_{r,n}(\widetilde{a})$, 
we compute 
\begin{equation*}
\frac{\widetilde{L}_{r,n}(\widetilde{a})}
{L_{r-1,n}(a_{\widehat{1},\widehat{r+1}})\,
L_{r,n-1}(a)\big|_{a_1\to ta_1, a_{r+1}\to ta_1^{-1}}. 
}
\end{equation*}
The three factors in this expression are given as follows: 
\begin{equation*}
\begin{split}
\widetilde{L}_{r,n}(\widetilde{a})
&=
\frac{
\prod_{i=1}^{n-1}
\Gamma(t^i;p,q)^{\binom{n-i+r-2}{r-1}}
}{
\left((p;p)_\infty(q;q)_\infty\right)^{\binom{n+r-2}{r-1}}
}
\,
\prod_{i=0}^{n-1}
\prod_{\substack{1\le k\le m\\ k\ne 1,r+1}}
\Gamma(t^{i}a_1^{\pm1}a_k;p,q)^{\binom{n-i+r-2}{r-1}}
\\
&\quad\cdot
\frac{
\prod_{i=0}^{n-1}
\prod_{\substack{1\le k<l\le m\\ k,l\ne 1,r+1}}
\Gamma(t^ia_ka_l;p,q)^{\binom{n-i+r-2}{r-1}}
}{
\prod_{0\le i+j<n}
\prod_{1\le k<l\le r}
\big(
e(t^ia_k,t^ja_l;p)
e(t^ia_k,t^ja_l;q)
\big)^{\binom{n-i-j+r-3}{r-2}}
},
\end{split}
\end{equation*}
\begin{equation*}
\begin{split}
L_{r-1,n}(a_{\widehat{1},\widehat{r+1}})
&=
\frac{
\prod_{i=0}^{n-1}
\prod_{\substack{1\le k<l\le m\\ k,l\ne 1,r+1}}
\Gamma(t^ia_ka_l;p,q)^{\binom{n-i+r-3}{r-2}}
}{
\prod_{0\le i+j<n}
\prod_{2\le k<l\le r}
\big(
e(t^ia_k,t^ja_l;p)
e(t^ia_k,t^ja_l;q)
\big)^{\binom{n-i-j+r-4}{r-3}}
},
\end{split}
\end{equation*}
\begin{equation*}
\begin{split}
L_{r,n-1}(a)\Big|_{\substack{a_1\to ta_1\\ a_{r+1}\to ta_1^{-1}}}
&=
\frac{\prod_{i=2}^{n}
\Gamma(t^{i};p,q)^{\binom{n-i+r-1}{r-1}}
\prod_{i=1}^{n-1}
\prod_{\substack{1\le k\le m\\ k\ne 1,r+1}}
\Gamma(t^{i}a_1^{\pm1}a_k;p,q)^{\binom{n-i+r-2}{r-1}}
}{
\prod_{i\ge1, j\ge0, 0\le i+j<n}
\prod_{l=2}^{r}
\big(
e(t^{i}a_1,t^ja_l;p)
e(t^{i}a_1,t^ja_l;q)
\big)^{\binom{n-i-j+r-3}{r-2}}
}
\\
&\quad\cdot
\frac{
\prod_{i=0}^{n-2}
\prod_{\substack{1\le k<l\le m\\ k,l\ne 1,r+1}}
\Gamma(t^ia_ka_l;p,q)^{\binom{n-i+r-3}{r-1}}
}{
\prod_{0\le i+j<n-1}
\prod_{2\le k<l\le r}
\big(
e(t^ia_k,t^ja_l;p)
e(t^ia_k,t^ja_l;q)
\big)^{\binom{n-i-j+r-4}{r-2}}
}.
\end{split}
\end{equation*}
Combining these formulas, for 
$r\ge2$ we have 
\begin{equation}\label{eq:wLrec}
\begin{split}
\widetilde{L}_{r,n}(\widetilde{a})
&=
\frac{1}{
\left((p;p)_\infty(q;q)_\infty\right)^{\binom{n+r-2}{r-1}}
}
\frac{\Gamma(t;p,q)^{\binom{n+r-2}{r-1}}}
{\prod_{i=1}^{n}\Gamma(t^i;p,q)^{\binom{n-i+r-2}{r-2}}}
\\
&\quad\cdot
\frac{
\prod_{1\le k\le m;\, k\ne 1,r+1}
\Gamma(a_1^{\pm1}a_k;p,q)^{\binom{n+r-2}{r-1}}
}{
\prod_{0\le j<n}
\prod_{l=2}^{r}
\big(
e(a_k,t^ja_l;p)
e(a_k,t^ja_l;q)
\big)^{\binom{n-j+r-3}{r-2}}
}
\\
&\quad\cdot 
L_{r-1,n}(a_{\widehat{1},\widehat{r+1}})\,
L_{r,n-1}(a)\big|_{a_1\to ta_1, a_{r+1}\to ta_1^{-1}}
\quad(n\ge 1),
\end{split}
\end{equation}
where $L_{r,0}(a)=1$, and 
for 
$r=1$ we have 
\begin{equation}\label{eq:wLrec1}
\begin{split}
\widetilde{L}_{1,n}(\widetilde{a})
&=
\frac{
\prod_{k=3}^{6}
\Gamma(a_1^{\pm1}a_k;p,q)}
{(p;p)_\infty(q;q)_\infty}
\frac{\Gamma(t;p,q)}
{\Gamma(t^n;p,q)}\,
L_{1,n-1}(ta_1,ta_1^{-1},a_3,\ldots,a_6)
\quad(n\ge 1),
\end{split}
\end{equation}
where $L_{1,0}(a)=1$. 
\begin{rem}{\rm
When 
$r=1$ $(m=6)$, by the balancing condition
$t^{2n-2}a_1\cdots a_6=pq$, 
we have 
$t^{2n-2}a_3a_4a_5a_6=pq$ 
in the limit 
$a_2\to a_1^{-1}$. 
Since $(t^{n-1}a_ia_j)(t^{n-1}a_ka_l)=pq$ 
for $\pr{i,j,k,l}=\pr{3,4,5,6}$, 
$
\Gamma(t^{n-1}a_ia_j;p,q)
\Gamma(t^{n-1}a_ka_l;p,q)=1$.  
Hence 
we have 
$\prod_{3\le k<l\le 6}\Gamma(t^{n-1}a_ka_l;p,q)=1$. 
}
\end{rem}
\par
Recall that $\det K^{(r,n)}(a)$ and $L_{r,n}(a)$ are related 
through the formulas 
\begin{equation*}
\det K^{(r,n)}(a)
=c_{r,n}L_{r,n}(a),\quad
\widetilde{K}_{r,n}(\widetilde{a})
=c_{r,n}\widetilde{L}_{r,n}(\widetilde{a}).  
\end{equation*}
Hence, by 
combining 
\eqref{eq:wKrec} and \eqref{eq:wLrec}, 
we obtain the following recurrence formulas for 
$c_{r,n}$: 
\begin{equation*}
c_{1,n}=c_{1,n-1}\,\frac{2n}{(p;p)_\infty(q;q)_\infty}
\frac{\Gamma(t^n;p,q)}{\Gamma(t;p,q)}\quad
(n\ge 1),\quad c_{1,0}=1, 
\end{equation*}
for $r=1$, and 
\begin{equation*}
c_{r,n}=c_{r-1,n}\,c_{r,n-1}
\left(\frac{2n}{(p;p)_\infty(q;q)_\infty}\right)^{\binom{n+r-2}{r-1}}
\frac
{\prod_{i=1}^{n}\Gamma(t^i;p,q)^{\binom{n-i+r-2}{r-2}}}
{\Gamma(t;p,q)^{\binom{n+r-2}{r-1}}},\quad c_{r,0}=1
\end{equation*}
for $r\ge2$. 
Solving these recurrence formulas, we obtain 
\begin{equation*}
\begin{split}
c_{r,n}&=
\left(
\frac{2^n n!}
{(p;p)_{\infty}^n (q;q)_\infty^n}
\right)^{\binom{n+r-1}{r-1}}
\frac{
\prod_{i=1}^{n}\Gamma(t^i;p,q)^{r\binom{n-i+r-1}{r-1}}
}{\Gamma(t;p,q)^{r\binom{n+r-1}{r}}}
\\
&=
\left(
\frac{2^n n!}
{(p;p)_{\infty}^n (q;q)_\infty^n}
\right)^{\binom{n+r-1}{r-1}}
\prod_{i=1}^{n}
\left(\frac{\Gamma(t^i;p,q)}
{\Gamma(t;p,q)}\right)^{r\binom{n-i+r-1}{r-1}}
\end{split}
\end{equation*}
for $r\ge1$ and $n\ge1$. 
This completes the proof of 
Theorem \ref{thm:1A}, 
as is pointed out in the remark of Corollary \ref{cor:det K(axy)}.
\section{\boldmath Determinant formula for $q$-hypergeometric integrals of type $BC_n$}
\label{section:6}
In this section we derive a determinant formula for $q$-hypergeometric integrals of type $BC_n$ 
from Theorem \ref{thm:1A}. 
In view of the balancing condition $a_1\cdots a_m t^{2n-2} =pq$, 
we first replace $a_m$ with $pa_m$, and then take the limit $p\to 0$.
By this procedure, from $\Phi(z;a;p,q)$ of \eqref{eq:Phi} we obtain 
\begin{equation*}
\mathsf{\Phi}(z;a;q)=
\prod_{i=1}^{n}
\frac{
(z_i^{\pm 2};q)_\infty(qa_m^{-1}z_i^{\pm1};q)_\infty}
{\prod_{k=1}^{m-1}(a_kz_i^{\pm1};q)_\infty}
\prod_{1\le i<j\le n}
\frac{(z_i^{\pm1}z_j^{\pm1};q)_\infty}
{(tz_i^{\pm1}z_j^{\pm 1};q)_\infty}. 
\end{equation*}
\par
In place of $\mathcal{H}_{s-1,n}^{(p)}$ we use the $\mathbb{C}$-vector space of $W_n$-invariant Laurent polynomials 
in $z=(z_1,\ldots,z_n)$ of degree $\le s-1$ in each variable:
\begin{equation*}
\mathsf{H}_{s-1,n}=\{f(z)\in \mathbb{C}[z^{\pm}]^{W_n}\,|\, \deg_{z_i}f(z)\le s-1 \ (i=1,\ldots,n)\}.
\end{equation*}
For generic $c=(c_1,\ldots,c_s)\in (\mathbb{C}^*)^s$, 
there exists a unique basis $\{\mathsf{E}_\mu(c;z)\,|\, \mu\in Z_{s,n}\}$ of $\mathsf{H}_{s-1,n}$ 
satisfying the condition 
\begin{equation*}
\mathsf{E}_{\mu}(c;(c)_{t,\nu})=\delta_{\mu,\nu}\qquad(\mu,\nu\in Z_{s,n}). 
\end{equation*}
This interpolation basis is obtained from $\prmts{E_{\mu}(c;z;p)}{\mu\in Z_{s,n}}$ simply by taking the limit $p\to 0$:
\begin{equation*}
\mathsf{E}_\mu(c;z)=\lim_{p\to 0}E_{\mu}(c;z;p)
\qquad (\mu\in Z_{s,n}).
\end{equation*}
Otherwise this basis can be constructed by the method of Section \ref{section:2} by replacing $e(u,v;p)$ with its limit
\begin{equation*}
\mathsf{e}(u,v)=u^{-1}(1-uv)(1-uv^{-1})=u+u^{-1}-v-v^{-1}.
\end{equation*}
These polynomials $\mathsf{E}_\mu(c;z)$ $(\mu\in Z_{s,n})$ for $s=2$ are 
written as 
\begin{equation}
\label{eq:Es=2}
\mathsf{E}_{(r,n-r)}(c_1,c_2;z)=
\sum_{1\le i_1<\cdots<i_{r}\le n\atop 1\le j_1<\cdots<j_{n-r}\le n}
\prod_{k=1}^{r}\frac{\mathsf{e}(z_{i_k},c_{2} t^{i_k-k})}{\mathsf{e}(c_1t^{k-1},c_2 t^{i_k-k})}
\prod_{l=1}^{n-r}\frac{\mathsf{e}(z_{j_l},c_1t^{j_l-l})}{\mathsf{e}(c_{2}t^{l-1},c_1t^{j_l-l})}
\end{equation}
for $r=0,1,\ldots,n$, 
where the summation is taken over all pairs of sequences 
$1\le i_1<\cdots<i_r\le n$ and $1\le j_1<\cdots<j_{n-r}\le n$ such that 
$\{i_1,\ldots,i_r\}\cup\{j_1,\ldots,j_{n-r}\}=\{1,2,\ldots,n\}$.
The polynomials \eqref{eq:Es=2} are 
used in the study of Jackson integrals of type $BC_n$ \cite{Ito2}. 
The polynomials $\mathsf{E}_\mu(c;z)$ $(\mu\in Z_{s,n})$ for general $s$ were defined for the first time in the present paper.
\par
We assume $m=2r+4$ $(r=1,2,\ldots)$. 
Fixing generic 
parameters $x=(x_1,\ldots,x_r)$ and $y=(y_1,\ldots,y_r)$, 
we take the interpolation bases 
for the two vector spaces 
$\mathsf{H}_{s-1,n}$ and $\cH^{(q)}_{r-1,n}$
with respect to $x$ and $y$ 
respectively: 
\begin{equation*}
\mathsf{H}_{r-1,n}=\bigoplus_{\mu\in Z_{r,n}}\bC\,\mathsf{E}_{\mu}(x;z),
\quad
\cH^{(q)}_{r-1,n}=\bigoplus_{\mu\in Z_{r,n}}\bC\,E_{\mu}(y;z;q). 
\end{equation*}
For each pair $(\mu,\nu)\in Z_{r,n}\times Z_{r,n}$, 
we consider 
the $q$-hypergeometric integral
\begin{equation}
\label{tri-Kmunu(a;x,y)}
\begin{split}
\mathsf{K}_{\mu,\nu}(a;x,y)
&=
\la \mathsf{E}_{\mu}(x;z),E_{\nu}(y;z;q)\ra_{\mathsf{\Phi}}
\\
&=\int_{\bTR^n}
\mathsf{E}_{\mu}(x;z)\,E_{\nu}(y;z;q)\mathsf{\Phi}(z;a;q)\omega_{n}(z)
\qquad(\mu,\nu\in Z_{r,n}),  
\end{split}
\end{equation}
assuming that $|a_k|<1$ $(k=1,\ldots,m-1)$. 
By the limiting procedure as explained above, Theorem \ref{thm:1A} implies the following evaluation formula.
\begin{thm}\label{thm:1Aq}
Set $m=2r+4$ $(r=1,2,\ldots)$.  
Under the balancing condition 
$a_1\cdots a_{m-1}a_mt^{2n-2}=q$ for the parameters, 
the determinant of the 
$\binom{n+r-1}{r-1}\times \binom{n+r-1}{r-1}$ matrix 
$\mathsf{K}(a;x,y)=(\mathsf{K}_{\mu,\nu}(a;x,y))_{\mu,\nu\in Z_{r,n}}$ 
is given explicitly by 
\begin{equation*}
\begin{split}
\det \mathsf{K}(a;x,y)
&=
\bigg(
\frac{2^nn!}{(q;q)_\infty^n}
\bigg)^{\binom{n+r-1}{r-1}}
\prod_{i=0}^{n-1}
\bigg(\frac{(t;q)_\infty^r
\prod_{k=1}^{m-1}(q/t^i a_ka_m;q)_{\infty}
}
{(t^{i+1};q)_\infty^r
\prod_{1\le k<l\le m-1}
(t^ia_ka_l;q)_{\infty}
}
\bigg)^{\binom{n-i+r-2}{r-1}}
\\
&\qquad\cdot
\Bigg(
\prod_{0\le i+j<n}
\prod_{1\le k<l\le r}
\big(
\mathsf{e}(t^ix_k,t^jx_l)
e(t^iy_k,t^jy_l;q)
\big)^{\binom{n-i-j+r-3}{r-2}}
\Bigg)^{\!\!-1}.
\end{split}
\end{equation*}
\end{thm}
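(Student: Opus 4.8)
The plan is to derive Theorem~\ref{thm:1Aq} from Theorem~\ref{thm:1A} by replacing $a_m$ with $pa_m$ and then letting $p\to 0$, as announced before the statement. First observe that the balancing condition $a_1\cdots a_mt^{2n-2}=pq$ of Theorem~\ref{thm:1A}, after this substitution, becomes $a_1\cdots a_{m-1}a_mt^{2n-2}=q$, which is exactly the condition imposed in Theorem~\ref{thm:1Aq}; in particular $a_m$ is then a fixed nonzero constant and $pa_m\to 0$. Writing $a'=(a_1,\ldots,a_{m-1},pa_m)$, for $p$ sufficiently small one has $|a'_k|<1$ for all $k$, so $K(a';x,y)$ is defined and the determinant formula \eqref{eq:detK(axy)} applies to it (using the remark that, once $c_{r,n}$ has been determined, Theorem~\ref{thm:1A} holds with no restriction on $|p|$). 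It therefore suffices to pass to the limit $p\to 0$ on both sides of \eqref{eq:detK(axy)} at $a=a'$.

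On the integral side, as already recorded in the excerpt, $\Phi(z;a';p,q)\to\mathsf\Phi(z;a;q)$ and $E_\mu(x;z;p)\to\mathsf E_\mu(x;z)$ as $p\to0$ (the latter because $e(u,v;p)\to\mathsf e(u,v)$). Here the only factor of $\Phi$ that is not immediate is $\Gamma(pa_mz_i^{\pm1};p,q)$, whose argument tends to $0$; I would rewrite it, via the quasi-periodicity $\Gamma(pu;p,q)=\theta(u;q)\Gamma(u;p,q)$, as $\theta(a_mz_i^{\pm1};q)\Gamma(a_mz_i^{\pm1};p,q)$, which tends to $(qa_m^{-1}z_i^{\pm1};q)_\infty$ and stays bounded near $\bTR^n$. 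Since for $|a_k|<1$ ($k\le m-1$), $|t|<1$, $|q|<1$ and $p$ small the poles of the integrand remain uniformly away from $\bTR^n$, the convergence is uniform on a neighbourhood of $\bTR^n$, and one may interchange the limit with the integral to get $\mathsf K_{\mu,\nu}(a;x,y)=\lim_{p\to0}K_{\mu,\nu}(a';x,y)$ for all $\mu,\nu\in Z_{r,n}$; hence $\det\mathsf K(a;x,y)=\lim_{p\to0}\det K(a';x,y)$.

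It then remains to compute the limit of the right-hand side of \eqref{eq:detK(axy)} at $a=a'$. The prefactor gives $\bigl(2^nn!/((p;p)_\infty^n(q;q)_\infty^n)\bigr)^{\binom{n+r-1}{r-1}}\to\bigl(2^nn!/(q;q)_\infty^n\bigr)^{\binom{n+r-1}{r-1}}$ since $(p;p)_\infty\to1$. In the product over $i$, from $\Gamma(u;p,q)=(pq/u;p,q)_\infty/(u;p,q)_\infty$ one has $\Gamma(u;p,q)\to1/(u;q)_\infty$ for fixed $u\neq0$, so $\Gamma(t^{i+1};p,q)/\Gamma(t;p,q)\to(t;q)_\infty/(t^{i+1};q)_\infty$; inside the product over $1\le k<l\le m$, the terms with $k,l\le m-1$ give $\Gamma(t^ia_ka_l;p,q)\to1/(t^ia_ka_l;q)_\infty$, while the terms with $l=m$ give $\Gamma(t^ia_k\cdot pa_m;p,q)=\theta(t^ia_ka_m;q)\Gamma(t^ia_ka_m;p,q)\to(q/t^ia_ka_m;q)_\infty$. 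Finally, in the last factor $e(t^ix_k,t^jx_l;p)\to\mathsf e(t^ix_k,t^jx_l)$ whereas $e(t^iy_k,t^jy_l;q)$ is unchanged. Assembling these limits reproduces exactly the formula asserted in Theorem~\ref{thm:1Aq}.

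The genuinely delicate points are, first, the justification of the interchange of limit and integration — i.e.\ the uniform control of the integrand near $\bTR^n$ as $p\to 0$, for which the $p$-quasi-periodicity of $\Gamma$ (rather than a crude estimate) is the natural device to handle $\Gamma(pa_mz_i^{\pm1};p,q)$ — and, second, the consistency of the basis degeneration: one must check that the limiting functions $\mathsf E_\mu(x;z)$ really form the interpolation basis of $\mathsf H_{r-1,n}$ attached to the reference points $(x)_{t,\nu}$. The latter follows by taking $p\to0$ in the interpolation conditions $E_\mu(x;(x)_{t,\nu};p)=\delta_{\mu,\nu}$, noting that the quasi-periodicity condition defining $\cH^{(p)}_{r-1,n}$ degenerates to the degree condition defining $\mathsf H_{r-1,n}$, both spaces having dimension $\binom{n+r-1}{r-1}=|Z_{r,n}|$, so the limit functions are linearly independent and hence a basis.
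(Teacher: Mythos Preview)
Your proposal is correct and follows exactly the approach the paper takes: the paper simply says ``By the limiting procedure as explained above, Theorem~\ref{thm:1A} implies the following evaluation formula,'' and your write-up supplies the details of that procedure (the $p$-quasi-periodicity trick for $\Gamma(pa_mz_i^{\pm1};p,q)$ and $\Gamma(t^ipa_ka_m;p,q)$, the termwise limits on the right-hand side, and the degeneration of the interpolation basis) that the paper leaves to the reader.
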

Note that the above formula with $r = 1$ (i.e., the integral \eqref{eq:detK(axy);r=1*} with $p\to 0$) is known as 
Gustafson's multivariate Nassrallah--Rahman integral \cite{G1}
\begin{equation}
\label{eq:GmNR}
\int_{\bTR^n}\mathsf{\Phi}(z;a;q)\omega_n(z)
=
\frac{2^nn!}{(q;q)_\infty^n}
\prod_{i=0}^{n-1}
\frac{(t;q)_\infty
\prod_{k=1}^{5}(q/t^i a_ka_6;q)_{\infty}
}
{(t^{i+1};q)_\infty
\prod_{1\le k<l\le 5}
(t^ia_ka_l;q)_{\infty}
},
\end{equation}
which recovers the Nassrallah--Rahman integral \cite{NR} when $n = 1$. 
In \cite{Ito2}, a proof of \eqref{eq:GmNR} is given by means of the polynomials \eqref{eq:Es=2}.

Furthermore, we can take the limit $a_{2r+3}\to 0$ in Theorem \ref{thm:1Aq}. 
Then, without balancing condition for the parameters $a_1,\ldots,a_{2r+2}$, we have the following.
\begin{cor}\label{cor:1Aq}
Let $\widetilde{\mathsf{K}}_{\mu,\nu}(a;x,y)$ be the $q$-hypergeometric integrals defined by \eqref{tri-Kmunu(a;x,y)} using 
\begin{equation*}
\widetilde{\mathsf{\Phi}}(z;a;q)=
\prod_{i=1}^{n}
\frac{
(z_i^{\pm 2};q)_\infty}
{\prod_{k=1}^{2r+2}(a_kz_i^{\pm1};q)_\infty}
\prod_{1\le i<j\le n}
\frac{(z_i^{\pm1}z_j^{\pm1};q)_\infty}
{(tz_i^{\pm1}z_j^{\pm 1};q)_\infty}
\quad(r=1,2,\ldots),
\end{equation*}
with $2r+2$ parameters, instead of $\mathsf{\Phi}(z;a;q)$.  
The determinant of the 
$\binom{n+r-1}{r-1}\times \binom{n+r-1}{r-1}$ matrix 
$\widetilde{\mathsf{K}}(a;x,y)=(\widetilde{\mathsf{K}}_{\mu,\nu}(a;x,y))_{\mu,\nu\in Z_{r,n}}$ 
is given by 
\begin{equation*}
\begin{split}
\det \widetilde{\mathsf{K}}(a;x,y)
&=
\bigg(
\frac{2^nn!}{(q;q)_\infty^n}
\bigg)^{\binom{n+r-1}{r-1}}
\prod_{i=0}^{n-1}
\bigg(\frac{(t;q)_\infty^r
(t^{2n-i-2} a_1a_2\cdots a_{2r+2};q)_{\infty}
}
{(t^{i+1};q)_\infty^r
\prod_{1\le k<l\le 2r+2}
(t^ia_ka_l;q)_{\infty}
}
\bigg)^{\binom{n-i+r-2}{r-1}}
\\
&\qquad\cdot
\Bigg(
\prod_{0\le i+j<n}
\prod_{1\le k<l\le r}
\big(
\mathsf{e}(t^ix_k,t^jx_l)
e(t^iy_k,t^jy_l;q)
\big)^{\binom{n-i-j+r-3}{r-2}}
\Bigg)^{\!\!-1}.
\end{split}
\end{equation*}
\end{cor}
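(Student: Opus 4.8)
The plan is to deduce Corollary~\ref{cor:1Aq} from Theorem~\ref{thm:1Aq} by letting $a_{2r+3}\to 0$. Fix $a_1,\ldots,a_{2r+2}$ with $|a_k|<1$, write $m=2r+4$, and regard $a_m=a_{2r+4}=q/(t^{2n-2}a_1\cdots a_{2r+3})$ as determined by the balancing condition of Theorem~\ref{thm:1Aq}; as $a_{2r+3}\to 0$ we then have $a_m\to\infty$, while the product $a_{2r+3}a_m=q/(t^{2n-2}a_1\cdots a_{2r+2})$ remains finite and nonzero. Since the interpolation bases $\mathsf{E}_\mu(x;z)$ and $E_\nu(y;z;q)$ do not involve the parameters $a$, the only effect of the limit is on the weight function $\mathsf{\Phi}(z;a;q)$.

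First I would examine the integrand. In $\mathsf{\Phi}(z;a;q)$ the parameter $a_{2r+3}$ enters only through the denominator factor $\prod_{i=1}^n(a_{2r+3}z_i^{\pm1};q)_\infty$, and $a_m$ only through the numerator factor $\prod_{i=1}^n(qa_m^{-1}z_i^{\pm1};q)_\infty$. On the torus $\bT^n$ both $|a_{2r+3}z_i^{\pm1}|$ and $|qa_m^{-1}z_i^{\pm1}|$ tend to $0$ uniformly, so $\mathsf{\Phi}(z;a;q)\to\widetilde{\mathsf{\Phi}}(z;a;q)$ uniformly on a fixed neighbourhood of $\bT^n$. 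Assuming that the limit may be taken under the integral sign (see below), this gives $\mathsf{K}_{\mu,\nu}(a;x,y)\to\widetilde{\mathsf{K}}_{\mu,\nu}(a;x,y)$ for all $\mu,\nu\in Z_{r,n}$, and hence $\det\mathsf{K}(a;x,y)\to\det\widetilde{\mathsf{K}}(a;x,y)$.

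Next I would compute the limit of the explicit formula in Theorem~\ref{thm:1Aq}. The prefactor $(2^nn!/(q;q)_\infty^n)^{\binom{n+r-1}{r-1}}$ and the factor involving $x$ and $y$ do not depend on $a_{2r+3}$. In the product over $i$, the numerator factor $\prod_{k=1}^{m-1}(q/t^ia_ka_m;q)_\infty$ behaves as follows: for $k\le 2r+2$ one has $a_ka_m\to\infty$, hence $(q/t^ia_ka_m;q)_\infty\to 1$; for $k=2r+3$ one has $q/(t^ia_{2r+3}a_m)=t^{2n-i-2}a_1\cdots a_{2r+2}$, hence $(q/t^ia_{2r+3}a_m;q)_\infty\to(t^{2n-i-2}a_1a_2\cdots a_{2r+2};q)_\infty$. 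Similarly, in the denominator $\prod_{1\le k<l\le m-1}(t^ia_ka_l;q)_\infty$ each factor with $l=2r+3$ tends to $(0;q)_\infty=1$, leaving $\prod_{1\le k<l\le 2r+2}(t^ia_ka_l;q)_\infty$. Substituting these limits reproduces exactly the asserted expression for $\det\widetilde{\mathsf{K}}(a;x,y)$.

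The step needing the most care is the interchange of the limit $a_{2r+3}\to 0$ with the integral over $\bT^n$. To justify it one checks that for $a_{2r+3}$ in a small punctured neighbourhood of the origin, so that $|a_{2r+3}|$ and $|qa_m^{-1}|$ are bounded by some $\delta<1$, none of the possible poles of $\mathsf{\Phi}(z;a;q)$ in the variables $z$ can approach $\bT^n$; hence the family $\mathsf{\Phi}(\cdot;a;q)$ is holomorphic and uniformly bounded on a fixed neighbourhood of $\bT^n$, and dominated convergence applies. Alternatively, one may adapt the analytic-continuation argument of Lemma~\ref{lem:analcont} to the present $p\to 0$ setting, regard both $\det\mathsf{K}(a;x,y)$ and the right-hand side of Theorem~\ref{thm:1Aq} as meromorphic functions of $a_{2r+3}$ that coincide, and simply evaluate them at $a_{2r+3}=0$.
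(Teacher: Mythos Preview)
Your proposal is correct and follows exactly the approach indicated in the paper, which simply states that Corollary~\ref{cor:1Aq} follows from Theorem~\ref{thm:1Aq} by taking the limit $a_{2r+3}\to 0$. You have in fact supplied considerably more detail than the paper does, carefully tracking the behaviour of each factor in the integrand and in the evaluated determinant and addressing the justification for interchanging the limit with the integral.
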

The above formula with $r = 1$ is also known as 
Gustafson's multivariate Askey--Wilson integral \cite{G2}
\begin{equation}
\label{eq:GmAW}
\int_{\bTR^n}\widetilde{\mathsf{\Phi}}(z;a;q)\omega_n(z)
=
\frac{2^nn!}{(q;q)_\infty^n}
\prod_{i=0}^{n-1}
\frac{(t;q)_\infty
(t^{2n-i-2} a_1a_2a_3a_4;q)_{\infty}
}
{(t^{i+1};q)_\infty
\prod_{1\le k<l\le 4}
(t^ia_ka_l;q)_{\infty}
},
\end{equation}
which is the Askey--Wilson integral \cite{AW} when $n = 1$. 
In \cite{vD} and \cite{Ito1}, proofs of \eqref{eq:GmAW} are given by using degenerate cases of the polynomials \eqref{eq:Es=2}
\begin{equation}
\label{eq:Es=2d}
\sum_{1\le i_1<\cdots<i_{r}\le n}
\prod_{k=1}^{r}\mathsf{e}(z_{i_k},c_{1} t^{i_k-k})\quad (r=0,1,\ldots,n),
\end{equation}
which essentially coincide with the special cases of Okounkov's interpolation polynomials 
\cite[Theorem 5.2]{O1}, \cite{O2} {\em attached to single columns} of partitions. (For the polynomials \eqref{eq:Es=2d}, see also \cite[Introduction, p.~1073--p.1074]{AI2009} and \cite{KNS}.)
Corollary \ref{cor:1Aq} can be regarded as a $BC_n$ version of 
the determinant formula of Tarasov and Varchenko \cite{TV97} 
for $q$-hypergeometric integrals of type $A_n$. 
\par\medskip
As a basis of the vector space $\mathsf{H}_{r-1,n}$, we can also take the {\it symplectic Schur functions}  
\begin{equation*}
\chi_\lambda(z)=\frac{\det\big(z_j^{\lambda_k+n-k+1}-z_j^{-\lambda_k-n+k-1}\big)_{j,k=1}^n}{\det\big(z_j^{n-k+1}-z_j^{-n+k-1}\big)_{j,k=1}^n}
\end{equation*}
associated with the partitions 
$\lambda\in B_{r,n}=\{\lambda\in \mathbb{Z}^n\,|\, r-1\ge \lambda_1\ge \cdots \ge \lambda_n\ge 0\}$.
These functions $\chi_\lambda(z)$ 
are expanded in terms of 
our interpolation polynomials 
as
\begin{equation*}
\chi_\lambda(z)
=\sum_{\mu\in Z_{r,n}}c_{\lambda\mu}\mathsf{E}_\mu(x;z)
\quad (\lambda\in B_{r,n}),
\end{equation*}
where $c_{\lambda\mu}=\chi_\lambda((x)_{t,\mu})$. The determinant of the matrix 
$C=\big(c_{\lambda\mu}\big)_{\lambda\in B_{r,n},\mu\in Z_{r,n}}$ is given by 
\begin{equation*}
\det C=
\det \Big(\chi_\lambda((x)_{t,\mu})\Big)_{\lambda\in B_{r,n},\mu\in Z_{r,n}}
=
\prod_{0\le i+j<n}
\prod_{1\le k<l\le r}
\mathsf{e}(t^ix_k,t^jx_l)
^{\binom{n-i-j+r-3}{r-2}}, 
\end{equation*}
as is proved in \cite[Corollary 1.5]{AI2009} or \cite[Theorem 3.2 (3.6)]{IIO}, for instance. We define the matrix 
\begin{equation*}
\mathsf{X}(a;y)=\big(\la \chi_\lambda(z),E_{\nu}(y;z;q)\ra_{\mathsf{\Phi}}\big)_{\lambda\in B_{r,n},\nu\in Z_{r,n}}.
\end{equation*} 
Then we have 
$
\mathsf{X}(a;y)=C\,\mathsf{K}(a;x,y)
$, so that 
$
\det\mathsf{X}(a;y)=\det C \det\mathsf{K}(a;x,y)
$. 
Under the condition $a_1\cdots a_{2r+4}t^{2n-2}=q$, this implies  
\begin{equation*}
\begin{split}
\det \mathsf{X}(a;y)
&=
\bigg(
\frac{2^nn!}{(q;q)_\infty^n}
\bigg)^{\binom{n+r-1}{r-1}}
\prod_{i=0}^{n-1}
\bigg(\frac{(t;q)_\infty^r
\prod_{k=1}^{2r+3}(q/t^i a_ka_{2r+4};q)_{\infty}
}
{(t^{i+1};q)_\infty^r
\prod_{1\le k<l\le 2r+3}
(t^ia_ka_l;q)_{\infty}
}
\bigg)^{\binom{n-i+r-2}{r-1}}
\\
&\qquad\cdot
\Bigg(
\prod_{0\le i+j<n}
\prod_{1\le k<l\le r}
e(t^iy_k,t^jy_l;q)
^{\binom{n-i-j+r-3}{r-2}}
\Bigg)^{\!\!-1}.
\end{split}
\end{equation*}
Similarly we define 
\begin{equation*}
\widetilde{\mathsf{X}}(a;y)
=\big(\la \chi_\lambda(z),E_{\nu}(y;z;q)\ra_{\widetilde{\mathsf{\Phi}}}\big)_{\lambda\in B_{r,n},\nu\in Z_{r,n}},
\end{equation*} 
which satisfies $
\widetilde{\mathsf{X}}(a;y)=C\,\widetilde{\mathsf{K}}(a;x,y)
$, so that 
$
\det\widetilde{\mathsf{X}}(a;y)=\det C\,\det\widetilde{\mathsf{K}}(a;x,y)
$. 
Then we have 
\begin{equation*}
\begin{split}
\det \widetilde{\mathsf{X}}(a;y)
&=
\bigg(
\frac{2^nn!}{(q;q)_\infty^n}
\bigg)^{\binom{n+r-1}{r-1}}
\prod_{i=0}^{n-1}
\bigg(\frac{(t;q)_\infty^r
(t^{2n-i-2} a_1a_2\cdots a_{2r+2};q)_{\infty}
}
{(t^{i+1};q)_\infty^r
\prod_{1\le k<l\le 2r+2}
(t^ia_ka_l;q)_{\infty}
}
\bigg)^{\binom{n-i+r-2}{r-1}}
\\
&\qquad\cdot
\Bigg(
\prod_{0\le i+j<n}
\prod_{1\le k<l\le r}
e(t^iy_k,t^jy_l;q)
^{\binom{n-i-j+r-3}{r-2}}
\Bigg)^{\!\!-1}.
\end{split}
\end{equation*}
This determinant formula is a contour integral version of the formula 
(\cite[Theorem 1.2]{INSlaterBC}
or 
\cite[Theorem 1.3]{AI2009}) 
for Jackson integrals of type $BC_n$. 
\par
The elliptic version of the determinant formulas for Jackson integrals of type $BC_n$ 
has not been established yet. It would be an important problem to clarify 
the relationship between Jackson integrals and contour integrals in the context of elliptic hypergeometric pairings
as in this paper. 

\section*{Acknowledgment}
This work was supported by JSPS Kakenhi Grants (B)15H03626 and (C)18K03339. 


\end{document}